\newtheorem{lem}{Lemma}[section]
\newtheorem{thm}[lem]{Theorem}
\newtheorem{pro}[lem]{Proposition}
\newtheorem{exa}[lem]{Example}
\newcommand{\LL}{\Lambda}
\newcommand{\tL}{\tilde{\Lambda}}
\newcommand{\olL}{\overline{\Lambda}}
\newcommand{\GG}{\Gamma}
\newcommand{\olG}{\overline{\Gamma}}
\newcommand{\cQ}{\mathcal Q}
\newcommand{\tQ}{\tilde{Q}}
\newcommand{\cE}{\mathcal E}
\newcommand{\hhom}{\mathrm{Hom}}
\newcommand{\uhom}[1]{\underline{\mathrm{Hom}}}
\newcommand{\Ext}{\mathrm{Ext}}
\newcommand{\Ker}{\mathrm{Ker}\,}
\newcommand{\Img}{\mathrm{Im}\,}
\newcommand{\add}{\mathrm{add}\,}
\newcommand{\gl}{\mathrm{gl.dim}\,}
\newcommand{\Ex}[2]{\Ext_{\Lambda}^{#1} (#2)}
\newcommand{\mmod}{\mathrm{mod}\,}
\newcommand{\Grm}{\mathrm{Gr}\,}
\newcommand{\grm}{\mathrm{gr}\,}
\newcommand{\cP}{\mathcal{P}}
\newcommand{\caC}{\mathcal{C}}
\newcommand{\arr}[2]{\begin{array}{#1}#2\end{array}}
\newcommand{\eqqc}[2]{\begin{equation}\label{#1}#2\end{equation}}
\begin{document}

\title{On $n$-translation Algebras}
\author[Guo]{Jin Yun Guo}
\email{gjy@hunnu.edu.cn}
\address{Jin Yun Guo\\ Department of Mathematics and Key Laboratory of HPCSIP (Ministry of Education of China), Hunan Normal  University, Changsha, CHINA}
\thanks{This work is partly supported by Natural Science Foundation of China \#10971172 and \#11271119}

\subjclass{Primary {16G20, 20C05}; Secondary{ 16S34,16P90}}

\keywords{}

\date{}





\begin{abstract}
Motivated by Iyama's higher representation theory,  we introduce $n$-translation quivers and $n$-translation algebras.
The classical $\mathbb Z Q$ construction of the translation quiver is generalized to construct an $(n+1)$-translation quiver from an $n$-translation quiver, using trivial extension and smash product.
We find a sufficient and necessary condition for the quadratic dual of $n$-translation algebras to have $n$-almost split sequences in the category of its projective modules.
\end{abstract}

\maketitle


\section{Introduction}
Auslander  Reiten quiver play a very important role in the representation theory of algebras\cite{ars, r1, ass} and it is widely used in the study of Cohen-Maucauley modules, cluster algebras and Calabi-Yau algebras and categories\cite{a,birs,k1,r,gls}.
There are many algebras, such as Auslander algebras, preprojective algebras,   related to Auslander Reiten quivers\cite{ars, dr, bgl}.
One important feature of Auslander Reiten quiver is that it is a translation quiver in the sense that there is a translation, related to the almost split sequences, equipped with it.
Recently, Iyama has developed the higher representation theory \cite{i1, i2, i4}, where a class of higher translation quivers also play an important role.

In  \cite{g02}, we introduce translation algebras as algebras with translation quivers as their quivers and the translation corresponds to an operation related to the  Nakayama functor.
They are algebras close to self-injective algebras with the vanishing radical cube.
The quadratic dual of Auslander algebras and of preprojective algebras are translation algebras.
In this paper, we introduce $n$-translation quivers and $n$-translation algebras as a generalization of translation quivers and translation algebras.
Besides the $n$-translation quiver, we also need $(p,q)$-Koszulity which unifies Koszulity in \cite{bgs} and almost Koszulity in \cite{bbk} in the definition of an $n$-translation algebra.

Another motivation of this paper is Iyama's construction of an $(n+1)$-complete algebra from the cone of an $n$-complete algebra, the bound quivers of these algebras are higher translation quivers \cite{i4}.
It is observed that the  quiver of an $(n+1)$-complete algebra is certain truncations of copies of the quiver of the $n$-complete algebra connected by new arrows \cite{g14}.
It is well known that the preinjective and preprojective components of the Auslander Reiten quiver are also obtained by connecting copies of the quiver (or opposite quiver) of the original algebra by certain new arrows \cite{r1}.
Iyama's construction of  absolute $n$-complete algebras is realized as truncations of the McKay quivers of Abelian groups in \cite{g13}, where the new arrows are obtained by inserting 'returning arrows' in the original quiver.

Returning arrows also appear in recent research on cluster algebras \cite{gls}.
In \cite{g14}, such returning arrows are obtained by constructing a trivial extension for a graded self-injective algebra.
Trivial extension is used in this paper to construct $(n+1)$-translation quiver from admissible $n$-translation quiver.
For an $n$-translation algebra whose the trivial extension is  $(n+1)$-translation algebra, which we called extendable, we define its $v$-extension as the smash product of its trivial extension with $\mathbb Z_v$, for a non-negative integer $v$.
The quiver of a  $v$-extension of an $n$-translation algebra with quiver $Q$ is $\mathbb Z_v|_n Q$, a construction generalizes the stable translation quiver  $\mathbb Z Q$ in the classical cases.
We prove that in stable Koszul case, the trivial extension of an $n$-translation algebra is an $(n+1)$-translation algebra.

Almost Koszul rings are introduced by Brenner, Butler and King in \cite{bbk}.
The almost Koszulity builds a bridge between $n$-translation algebras and  $n$-almost split sequences.
We find a sufficient and necessary condition for  the category of finitely generated projective modules of the quadratic dual of $n$-translation algebras to have $n$-almost split sequences in the category of its projective modules.
We also characterize the $n$-almost split sequences using $\tau $-hammocks of the $n$-translation quiver.
In fact, the Koszul complexes provide the correspondence between radical layers of the indecomposable projective module of the $n$-translation algebra and terms in the $n$-almost split sequences starting from the corresponding indecomposable projective module of its quadratic dual, which we call a partial Artin-Schelter $n$-regular algebra.

As examples, we show how our theory is applied to in the path algebras of Dynkin type, and to Iyama's absolute $n$-complete algebra $T_{m}^{n}(k)$ for $m=4$ and $n=0,1$.
In a forthcoming paper \cite{gl15}, we study certain classes of $n$-translation algebras which include the quadratic dual of Iyama's absolute $n$-complete algebra $T_{m}^{n}(k)$.
From the viewpoint of algebras, our $n$-translation algebras look like a quadratic dual version of algebras in $n$-representation theory \cite{i1, i2, i4, io, hio}.

In Section 2, we recall concepts and results on algebras, quivers, and higher representation theory that we are needed.
Notions and results on almost Koszul algebra are recalled and modified in the last part of this section.
$n$-translation quivers and $n$-translation algebras are introduced in Section 3.
In Section 4, we describe the trivial extension of algebras given by a admissible $n$-translation quivers and  prove that the trivial extension of a stable Koszul $n$-translation algebra is $(n+1)$-translation algebra.
The $(n+1)$-translation quiver of a $v$-extension of an extendable $n$-translation algebra is described in Section 5.
In Section 6, we introduce partial Artin-Schelter $n$-regular algebra and verify the Koszul duality between $n$-translation algebras and partial Artin-Schelter $n$-regular algebras.
In section 7,  $n$-almost split sequences for in the category of the finite generated projective modules of a  partial Artin-Schelter $n$-regular algebra is discussed,  using Koszul complexes.
Some examples are given in the last section to indicate how our theory are related to the representation theory of path algebras of Dynkin quiver.

\section{Preliminaries}
\subsection*{Algebras and quivers}
Let $k$ be a field and let $\Lambda$ be a graded algebra, that is, $\Lambda = \Lambda_0 + \Lambda_1 + \cdots $, with $\Lambda_0$ semisimple and $\Lambda$ is generated by $\Lambda_1$ over $\Lambda_0$.
We assume that $\Lambda$ is {\em locally finite} in the sense that $\Lambda_0$ is a direct sum (possibly, infinite copies) of $k$ and for any element $x \in \Lambda$, there is an idempotent $e \in \Lambda$ such that $x \in e \Lambda e$, and $e \Lambda_1 e$ is finite-dimensional.
If $e \Lambda e$ is finite-dimensional for each idempotent $e$, $\Lambda$ is called {\em locally finite-dimensional}.
Write $J=\Lambda_1 + \Lambda_2 + \cdots $, call it the Jacobson radical of $\Lambda$.

Let $\Lambda_0 = \bigoplus_{i\in Q_0} k_i$, with $k_i \simeq k$ as algebras.
Write $\otimes = \otimes_{\Lambda_0 } $, the tensor product over $\Lambda_0$, set $M^{ \otimes 1} =M$ and $M^{ \otimes t+1} =M^{ \otimes t}\otimes M$ for a $\Lambda_0$-$\Lambda_0$-bimodule $M$.
Let $T_{\Lambda_0 }(\Lambda_1)= \bigoplus_{t=0 }^{ \infty }\Lambda_1^{ \otimes t}$ be the tensor algebra of $\Lambda_1$ over $\Lambda_0$, with multiplication given by the tensor product.
Then $\Lambda_t$ are $\Lambda_0$-$\Lambda_0$-bimodules and we have compatible bimodule epimorphisms $\mu_t: \Lambda_1^{ \otimes t} \longrightarrow \Lambda_t$.
This induces and epimorphism of algebras $$\mu: T_{ \Lambda_0 }(\Lambda_1) \longrightarrow \Lambda$$ with $\Ker \mu \subseteq \bigoplus_{t=2 }^{ \infty }\Lambda_1^{ \otimes t}.$

Such algebras are presented by quivers with relations.
Let $e_i$ be the image of the identity of $k$ under the canonical embedding of the $k_i$ into $\Lambda_0$.
Then $\{e_i| i \in Q_0\}$ is a complete set of orthogonal primitive idempotents in $\Lambda_0$ and $$\Lambda_1 = \bigoplus_{i,j \in Q_0 }e_j \Lambda_1 e_i$$ as $\Lambda_0$-$\Lambda_0$-bimodules.
Fix a basis $Q_1^{ij}$ of $e_j \Lambda_1 e_i$ for any pair $i, j\in Q_0$, the elements of $Q_1^{ij}$ are called arrows from $i$ to $j$.
Take $$Q_1= \cup_{(i,j)\in Q_0\times Q_0} Q_1^{ ij}.$$
Thus we get a quiver $Q$ with vertex set $Q_0$ and arrow set $Q_1$ which is {\em locally finite} in the sense that there are only finitely many arrows starting or ending at each vertex.
The path algebra $kQ$ of $Q$ is defined as usual, and we have that $$kQ \simeq T_{ \Lambda_0 }(\Lambda_1).$$
For an arrow $\alpha: i \to j$ in the quiver, denote by $s(\alpha)=i$  its starting vertex and by $t(\alpha)=j$ its ending vertex.
For a path $p=\alpha_r\cdots \alpha_1$, denote by $s(p)= s(\alpha_1)$  its starting vertex and by $t(p)= t(\alpha_r)$ its ending vertex.
For a path $p$, call the number of arrows in $p$ the length of $p$ and write it as $l(p)$.

Thus $\Lambda  \simeq  kQ/I$ for some ideal $I= \Ker \mu \subseteq \bigoplus_{t \ge 2} \Lambda_1^{\otimes t}$, the ideal generated by the paths of length larger than or equal to $2$ in $kQ$.
The grading of $\Lambda$ is induced by the lengths of the paths, $\{e_i| i\in Q_0\}$ is a complete set of the paths of length zero, which forms a basis of $\Lambda_0$.
Let $\rho $ be a set of generators of $I$ and call it a {\em relation set}.
The elements in $\rho $ are linear combinations of the paths of lengths larger than or equal to two in $kQ$.
From now on, by a quiver we usually mean a {\em bound quiver} $Q = (Q_0, Q_1, \rho)$, that is, a quiver with  the vertex set $Q_0$, the arrow set $Q_1$  and the relation set $\rho$.We may assume that $\rho$ is a set of linear combinations of the paths of the same length, since $\Lambda $ is graded.
$\Lambda $ is called {\em quadratic} if $I$ is generated by $I \cap (\Lambda_1\otimes \Lambda_1)$.
In this case, $\rho$ can be chosen as a subset of $\Lambda_1\otimes \Lambda_1$.

We assume that the algebras are connected in the sense its quiver is connected throughout this paper, when not specialized.
We use the same notation $Q$ for both the quiver and the bound quiver, denote by $ k(Q)= kQ/(\rho)$ the algebra given by the bound quiver $Q$, that is, the quotient algebra of the path algebra $kQ$ of $Q$ modulo the ideal generated by the relation set $\rho$.
A path is called a {\em  bound path} if its image in $k(Q)$ is nonzero.
Similarly, we call an element $u =\sum_p c_p p$ a bound element if its image in $k(Q)$ is nonzero, and call $p$ a path of $u$ if $c_p\neq 0$.

For a locally finite graded algebra $\Lambda$ over a field $k$, denote by $\mathrm{Mod}\, \Lambda$  the category of left $\Lambda $-modules, morphisms and extensions in this category will be denoted by $\hhom_{\Lambda}$ and $ \Ext_{\Lambda}$.
Denote by $\Grm \Lambda$  the category of graded left $\Lambda$-modules, morphisms and extensions in this category will be denoted by $\mathrm{hom}_{\Lambda}$ and $ \mathrm{ext}_{ \Lambda}$.
A module $M$ in $\Grm \Lambda$ has a decomposition $M = \sum_{ t\in \mathbb Z} M_t$ as a direct sum of vector spaces satisfying $\Lambda_tM_s \subseteq M_{t+s}$ for $t,s\in \mathbb Z$.
For $M, N$ in $\Grm \Lambda$, $\mathrm{hom}_{\Lambda}$ is the set of homomorphisms $f$ satisfying $f(M_t)\subseteq N_t$ for each $t\in \mathbb Z$.
Write $[\,\, ]$ for the shifting functor, that is $M[r]_t = M_{t+r}$ for any graded $\Lambda$-module $M$.
We have that $\hhom_{\Lambda}(M, N) = \sum_{t \in \mathbb Z} \mathrm{hom}_{\Lambda} (M, N[t])$ for finitely generated graded $\Lambda$-module $M$ and $N$.
In this case, both $\hhom_{\Lambda}(M, N)$ and $ \Ext_{ \Lambda }(M, N)$ are graded naturally as  right $\LL$-modules.

A graded $\Lambda$-module  $M = \sum_{t} M_t$ is called {\em locally finitely generated (resp. locally finite-dimensional)} if for any idempotent $e \in \Lambda$, the submodule $\Lambda e M$ is finitely generated (resp. finite-dimensional).
If $\Lambda$ is locally finitely generated graded algebra and $M, N$ are locally finitely generated, then $\hhom_{\Lambda}(M, N)$ and $ \Ext_{ \Lambda }(M, N)$ are locally finitely generated  right $\LL$-modules.

Denote by $\grm \Lambda$ the category of locally finitely generated graded left $\Lambda$-modules.
If $\Lambda $ is  locally finitely generated, $\grm \Lambda$ is an abelian  category closed under extensions.
Left graded $\Lambda$-modules are also left graded $\Lambda_0 $-modules.
Denote by $\mmod\Lambda$ the category of locally finitely generated left $\Lambda$-modules.

Let $M$ be a locally finitely generated $\Lambda$-module.
Clearly, $M = \sum_{i\in Q_0} e_i M$,  each $\Lambda e_i M$ is finitely generated and we have that $$M =\bigoplus_t\bigoplus_{i\in Q_0} e_i M_t$$ as directed sum of  vector spaces and each  $e_i M_t$ is finite-dimensional.
Define the local dual $$ D_* M =\bigoplus_t\bigoplus_{i\in Q_0} \hhom_k ( e_i M_t, k) \subseteq \hhom_k (M, k)=D M, $$ as vector spaces.
This defined a locally finitely generated right $ \Lambda$-module via the action $$(\sum_{i\in Q_0} f_i)\alpha  (\sum_{j\in Q_0} x_j) = \sum_{i\in Q_0} f_i(\alpha  x_i)$$ for any $\alpha \in \Lambda$, $f =\sum_{i\in Q_0} f_i \in D_* M$ and $x =\sum_{j\in Q_0} x_j \in M$.
In fact, it is a duality between the categories of locally finitely generated left $\Lambda$-modules and the categories of locally finitely generated right $\Lambda $-modules, which restricts to the usual dual functor on the finite-dimensional modules.

Write $P(i) = \Lambda e_i$ for the indecomposable projective module corresponding to the vertex $i$.
Then $ S(i) \simeq  P(i)/J P(i)$ is the top of $P(i)$.
Let $I(i)$ be the injective envelope of $S(i)$.

For a $\Lambda$-module $M$, write $P(M)=P^{0} (M)$ for its projective cover, and $$\cdots\longrightarrow P^{r} (M) \stackrel{f_r}{\longrightarrow} \cdots \stackrel{f_2}{\longrightarrow} P^1(M) \stackrel{ f_1}{\longrightarrow} P^0(M) \stackrel{f_0}{\longrightarrow} M \longrightarrow 0$$ for a minimal projective resolution of $M$.
Recall for a $t \ge 0$, the  syzygy $\Omega^t M$ of $M$ is defined as $\Omega^{t} M =\Img f_{t}$.

\subsection*{Higher representation theory}
Iyama recently introduces higher Auslander-Reiten theory, we recall some basic notions \cite{i4}.
For $M\in\mmod\Lambda$, denote by $\add M$ the subcategory of $\mmod\Lambda$ consisting of direct summands of finite direct sums of copies of $M$.

A subcategory $\caC$ of  $\mmod\Lambda$ is called {\em contravariantly finite} if for any $X \in \mmod\Lambda$, there exists a morphism $f \in\hhom_{\Lambda}(C,X)$ with $C \in \caC$ such that $\hhom_{\Lambda} (-,C) \stackrel{f}{\longrightarrow} \hhom_{\Lambda} (-,X) \longrightarrow 0$ is exact on $\caC$.
Dually a {\em covariantly finite subcategory} is defined.
A contravariantly and covariantly finite subcategory is called {\em functorially finite}.

Let $\caC$ be a Krull-Schmidt category.
\begin{itemize}
\item[(a)] For an object $M\in\caC$, a morphism $f_0\in J_{\caC}(M,C_1)$ is called {\em left almost split} if $C_1\in\caC$ and \[\hhom_{\caC} (C_1,-) \stackrel{f_0} {\longrightarrow} J_{\caC}(M,-) \longrightarrow0\] is exact on $\caC$. A left minimal and left almost split morphism is called a {\em  source morphism}.

\item[(b)] We call a complex
    \eqqc{Eqqa}{ M\stackrel{f_0}{ \longrightarrow} C_1 \stackrel{f_1}{\longrightarrow} C_2 \stackrel{f_2}{\longrightarrow} \cdots
    }
    a {\em source sequence} of $M$ if the following conditions are satisfied.
 \begin{itemize}
 \item[(i)] $C_i\in\caC$ and $f_i\in J_{\caC}$ for any $i$,

 \item[(ii)] we have the following exact sequence on $\caC$.
     \eqqc{Eqqb}{ \cdots \stackrel{f_2}{\longrightarrow} \hhom_{\caC}(C_2,-)\stackrel{f_1}{\longrightarrow}\hhom_{\caC}(C_1,-)\stackrel{f_0}{\longrightarrow}J_{\caC}(M,-)\longrightarrow0
     }
\end{itemize}
A {\em sink morphism} and a {\em sink sequence} are defined dually.

\item[(c)] We call a complex
\[0\longrightarrow M \stackrel{}{\longrightarrow} C_1 \stackrel{}{\longrightarrow} C_2 \stackrel{}{\longrightarrow} \cdots \stackrel{}{\longrightarrow} C_{n} \stackrel{}{\longrightarrow} N \longrightarrow 0 \]
an {\em $n$-almost split sequence} if this is a source sequence of $M\in\caC$ and a sink sequence of $N\in\caC$.
\end{itemize}

A source sequence \eqref{Eqqa} corresponds to a minimal projective
resolution \eqref{Eqqb} of a functor $J_{\caC}(M,-)$ on $\caC$.
Thus any indecomposable object $M\in\caC$ has a unique source sequence
up to isomorphisms of complexes if it exists.

\subsection*{$(p,q)$-Koszulity}

Let $\Lambda$ be a locally finite graded algebra.
Assume that $\{S(i) = \Lambda_0 e_i| i\in Q_0\}$ is a complete set of non-isomorphic graded  $\Lambda_0 $-modules concentrated at degree $0$, which is also  a complete set of left graded simple $\Lambda$-modules.
In this section, we discuss almost Koszul (or $(p,q)$-Koszul) algebras, following \cite{bbk} and \cite{bgs}.

With the convention that $\Lambda_{\infty}=0$ and $M_{\infty}=0$ for any left graded $\Lambda$-module $M$, we have the following version of {\em left almost Koszul ring}  in \cite{bbk}.

For $p, q \in \mathbb N \cup \{\infty \}$, a locally finitely generated graded algebra $\Lambda = k(Q)$ is called {\em left almost Koszul} of type $(p,q)$ if it satisfies the following conditions:
\begin{enumerate}
\item[(1)] $\Lambda_t=0$ for $t > p$, and for each $i\in Q_0$,

\item[(2)] there is a graded complex

\eqqc{def_pqK}{ P^{\bullet}(i): 0 \longrightarrow P^q(i) \longrightarrow \cdots \longrightarrow P^1(i) \longrightarrow P^0(i) \longrightarrow 0}
of projective left $\Lambda$-modules such that each $P^t(i)$ is generated by its component of degree $t$ with non-zero homology $S(i)=\Lambda_0 e_i$ in degree $0$, the possible only another non-zero homology in degree $p+q$,  and in this case, it is $W(i) = \Lambda_p \otimes P^q_q(i)$.
\end{enumerate}
A simple module $S(i)$ satisfying (2) in above definition will be called a {\em  $(p,q)$-Koszul simple module.}
Right almost Koszul algebra is defined similarly using right modules.
In this paper, such algebra is abbreviated as {\em left (right)  $(p,q)$-Koszul algebra}.

Obviously, a Koszul algebra of Loewy length $p+1$ is a $(p,q)$-Koszul for $q > \gl \Lambda$.
A $(p,q)$-Koszul algebra $\Lambda$ is Koszul if one of $p, q$ is $\infty$.

Similar to \cite{bbk}, we also have the following results on the left (right) $(p,q)$-Koszulity for locally finitely generated graded algebras, and we omit the proofs since they can be easily done by  modifying those in \cite{bbk}.

\begin{pro}\label{char_pqk}
Assume $q\ge 1$, then $\Lambda $ is left $(p,q)$-Koszul if and only if

\begin{enumerate}

\item $\Ext_{\Lambda}^t (S(i), S(j))_s =0$ for $i,j \in Q_0$ and $s \neq t \le q$, and

\item $\Ext_{\Lambda}^{q+1} (S(i), S(j))_s =0$ for $s \neq p+q$.
\end{enumerate}
\end{pro}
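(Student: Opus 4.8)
The plan is to reduce both directions to statements about the minimal graded projective resolution $\cdots\to P^{t}(i)\to\cdots\to P^{0}(i)\to S(i)\to 0$ of each simple $S(i)$, $i\in Q_{0}$, via the following dictionary. Applying $\hhom_{\Lambda}(-,S(j))$ to this resolution kills all differentials (they land in $\rad$), so $\Ext^{t}_{\Lambda}(S(i),S(j))\cong\hhom_{\Lambda}(P^{t}(i),S(j))$; and since $\hhom_{\Lambda}(\Lambda e[-d],S(j))$ is concentrated in internal degree $d$, the vanishing $\Ext^{t}_{\Lambda}(S(i),S(j))_{s}=0$ for all $j\in Q_{0}$ and all $s\neq d$ is equivalent to ``$P^{t}(i)$ is generated in degree $d$''. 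Thus condition (1) says exactly that $P^{t}(i)$ is generated in degree $t$ for $0\le t\le q$, and condition (2) says exactly that $P^{q+1}(i)$ is generated in degree $p+q$. Because $\Lambda$ is only locally finite, one works one vertex at a time, replacing $\Lambda$ by $e\Lambda e$ for a suitable finite idempotent $e$; all modules in sight are locally finitely generated and the finite-dimensional arguments of \cite{bbk} apply with only cosmetic changes. Throughout, $\Lambda_{t}=0$ for $t>p$ is carried as part of the data (it is condition (1) of the definition of a $(p,q)$-Koszul algebra, read, as in \cite{bbk}, into both sides).

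For the forward implication, let $\Lambda$ be left $(p,q)$-Koszul with defining complex $P^{\bullet}(i)\colon 0\to P^{q}(i)\to\cdots\to P^{0}(i)\to 0$. Exactness in cohomological degrees $0<t<q$ together with $H^{0}(P^{\bullet}(i))=S(i)$ makes $P^{q-1}(i)\to\cdots\to P^{0}(i)\to S(i)\to 0$ a projective resolution, and a degree count shows it is minimal — a split part of $P^{t}(i)\to P^{t-1}(i)$ would be a module generated purely in degree $t$ isomorphic to a summand of a module generated purely in degree $t-1$, which is impossible. Hence $\Omega^{t}S(i)$ is the image of $P^{t}(i)$ and so is generated in degree $t$ for $1\le t\le q$, giving (1). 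Moreover $P^{q}(i)\to\Omega^{q}S(i)$ is a projective cover (its kernel $H^{q}(P^{\bullet}(i))$ is $W(i)=\Lambda_{p}\otimes P^{q}_{q}(i)$ or $0$, in either case inside $\rad P^{q}(i)$), so $\Omega^{q+1}S(i)=H^{q}(P^{\bullet}(i))$; since $\Lambda_{t}=0$ for $t>p$, $W(i)$ is concentrated in degree $p+q$, hence $P^{q+1}(i)$ is generated in degree $p+q$, giving (2). (If $H^{q}(P^{\bullet}(i))=0$ then $\pd S(i)\le q$ and (2) is vacuous for this $i$.)

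For the converse, assume (1) and (2) and truncate the minimal resolution at cohomological degree $q$: put $P^{\bullet}(i)\colon 0\to P^{q}(i)\to P^{q-1}(i)\to\cdots\to P^{0}(i)\to 0$. By (1) each $P^{t}(i)$ is generated in degree $t$; by exactness of the minimal resolution $H^{0}(P^{\bullet}(i))=S(i)$ and $H^{t}(P^{\bullet}(i))=0$ for $0<t<q$, while $H^{q}(P^{\bullet}(i))=\Ker(P^{q}(i)\to P^{q-1}(i))=\Omega^{q+1}S(i)$. This last module is an image of $P^{q+1}(i)$, hence generated in degree $p+q$ by (2); since $\Lambda_{t}=0$ for $t>p$ it therefore lies in $(P^{q}(i))_{p+q}=\Lambda_{p}\otimes P^{q}_{q}(i)$, and since $P^{q-1}(i)$, being generated in degree $q-1$, has zero component in degree $p+q$, the differential annihilates all of $(P^{q}(i))_{p+q}$; hence $\Omega^{q+1}S(i)=\Lambda_{p}\otimes P^{q}_{q}(i)=W(i)$ (or both are $0$). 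So $P^{\bullet}(i)$ satisfies every requirement of the definition, and $\Lambda$ is left $(p,q)$-Koszul.

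The genuinely delicate part is the grading bookkeeping: using exactness to force each $\Omega^{t}S(i)$ with $t\le q$ to be generated purely in degree $t$, and then pinning the top homology down to exactly $W(i)=\Lambda_{p}\otimes P^{q}_{q}(i)$ concentrated in degree $p+q$ — here the finite Loewy length hypothesis $\Lambda_{t}=0$ for $t>p$ is indispensable, since otherwise that homology need not even be semisimple. The remaining point, that local finiteness lets one import the finite-dimensional arguments of \cite{bbk} by restricting to $e\Lambda e$, is routine.
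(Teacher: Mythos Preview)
The paper does not give its own proof of this proposition: immediately before stating it the authors say ``we omit the proofs since they can be easily done by modifying those in \cite{bbk}.'' Your argument is correct and is precisely the adaptation of the Brenner--Butler--King proof (their Proposition~3.4) to the locally finite setting that the paper has in mind: the dictionary between $\Ext$-grading and generation degrees of the terms in the minimal resolution, the minimality-by-degree argument, and the identification of $\Omega^{q+1}S(i)$ with $\Lambda_{p}\otimes P^{q}_{q}(i)$ via the vanishing $(P^{q-1}(i))_{p+q}=0$. You are also right to flag that the hypothesis $\Lambda_{t}=0$ for $t>p$ must be carried on both sides; in \cite{bbk} it appears explicitly in the characterization, and the paper's statement tacitly assumes it.
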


\begin{pro}\label{lrpq} A left  $(p, q)$-Koszul algebra is also a right $(p, q)$-Koszul algebra.
\end{pro}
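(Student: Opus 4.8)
The plan is to reduce the statement to the cohomological characterization of Proposition \ref{char_pqk} and then to transport that characterization across the local duality $D_*$. The case $q=0$ can be dispatched first and separately: every clause in the definition of $(p,0)$-Koszulity is visibly left--right symmetric, so assume from now on $q\ge 1$, which is exactly the hypothesis under which Proposition \ref{char_pqk} applies. Note also that the other ingredient of the definition, namely $\Lambda_t=0$ for $t>p$, is a condition on the graded algebra itself and so passes to the opposite algebra $\Lambda^{\mathrm{op}}$ (whose quiver is the opposite quiver, still locally finite, and which is still locally finitely generated) word for word; hence it suffices to transfer the two $\Ext$-vanishing clauses of Proposition \ref{char_pqk}.

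Next, invoke the local duality $D_*$ recalled above. It is an exact, internal-degree-preserving duality between the locally finitely generated left $\Lambda$-modules and the locally finitely generated right $\Lambda$-modules; it sends the left simple $S(i)$ to the right simple at $i$, say $S^{\mathrm{op}}(i)$, it sends each indecomposable projective left module $\Lambda e_i$ to the indecomposable injective right module with socle $S^{\mathrm{op}}(i)$, and it commutes with the shift $[\,\, ]$ (since $(D_* M)_t = D(M_t)$). Consequently $D_*$ carries the minimal graded projective resolution of the left module $S(j)$ to the minimal graded injective coresolution of the right module $S^{\mathrm{op}}(j)$, term by term and without altering internal degrees. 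Because $\Ext$ is balanced, one may compute $\Ext^t_{\Lambda^{\mathrm{op}}}(S^{\mathrm{op}}(i),S^{\mathrm{op}}(j))$ from that injective coresolution; applying $D_*$ to the $\hhom$ complexes then yields a graded isomorphism
\[ \Ext^t_{\Lambda}(S(i),S(j))_s \;\cong\; \Ext^t_{\Lambda^{\mathrm{op}}}\bigl(S^{\mathrm{op}}(j),S^{\mathrm{op}}(i)\bigr)_s \]
for all $i,j\in Q_0$ and all $t,s$.

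Finally, assemble the pieces. By Proposition \ref{char_pqk}, $\Lambda$ being left $(p,q)$-Koszul is equivalent to: $\Ext^t_\Lambda(S(i),S(j))_s=0$ whenever $s\ne t\le q$, and $\Ext^{q+1}_\Lambda(S(i),S(j))_s=0$ whenever $s\ne p+q$, for all $i,j$. Since both clauses are quantified over all ordered pairs $(i,j)$, the isomorphism above — although it interchanges the two vertices — shows they are equivalent to the corresponding vanishing statements for $\Ext_{\Lambda^{\mathrm{op}}}$. Applying Proposition \ref{char_pqk} to the algebra $\Lambda^{\mathrm{op}}$ (legitimate, as it satisfies the same standing hypotheses) and restoring the self-dual clause $\Lambda_t=0$ for $t>p$, one concludes that $\Lambda^{\mathrm{op}}$ is left $(p,q)$-Koszul, i.e.\ that $\Lambda$ is right $(p,q)$-Koszul, as desired.

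The one point that genuinely requires care — and the main obstacle — is the middle step: one must check that $D_*$ does not reverse the internal grading on $\Ext$, so that clause (2), which pins the top homology to internal degree $p+q$ and not $-(p+q)$, survives intact. This is handled by the combination of three facts already available: $D_*$ preserves internal degree on each homogeneous component, it commutes with $[\,\, ]$, and $\Ext$ may be computed from either a minimal projective resolution or a minimal injective coresolution with the same internal grading. Everything beyond this is bookkeeping over the vertex set $Q_0$ and can be modelled directly on the argument in \cite{bbk}.
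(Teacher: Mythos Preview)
Your argument is correct and is precisely the kind of adaptation the paper has in mind: the paper does not give its own proof of this proposition but explicitly omits it, saying it ``can be easily done by modifying those in \cite{bbk}.'' Your route via Proposition~\ref{char_pqk} followed by transport along the exact duality $D_*$ is a clean rendering of that modification; the swap of vertices in the $\Ext$-isomorphism is harmless because both clauses are quantified over all pairs, and your final paragraph correctly isolates and disposes of the only real hazard (the internal grading on $\Ext$). One small quibble: the parenthetical ``$(D_*M)_t=D(M_t)$'' is not the grading that makes $D_*M$ a graded right module---one needs $(D_*M)_t=D(M_{-t})$---but this does not affect your conclusion, since the simples sit in degree $0$ and a direct check shows $D_*$ still preserves the degree of morphisms, hence the grading on $\Ext$.
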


Thus we do not need to distinguish left and right for  $(p,q)$-Koszul algebra, we just call it a $(p,q)$-Koszul algebra.

\begin{pro}\label{bbk1} A $(p, q)$-Koszul algebra $\Lambda$ is generated by $\Lambda_0$ and $\Lambda_1$, and, if $q \ge 2$, then $\Lambda$ is quadratic.
\end{pro}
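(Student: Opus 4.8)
The plan is to read off both assertions from the cohomological description of $(p,q)$-Koszulity in Proposition \ref{char_pqk}, following the classical pattern by which one shows that a Koszul algebra is quadratic (this is, in essence, the argument of \cite{bbk} transported to the locally finite graded setting). We may assume $q\ge 1$: when $q=0$ the defining complex is concentrated in homological degree $0$, which forces $\Lambda$ to be generated in degree one already, and the quadratic assertion is vacuous. Write $\Lambda=k(Q)=kQ/I$ with $I\subseteq\bigoplus_{t\ge 2}\Lambda_1^{\otimes t}$, fix the arrow basis of $\Lambda_1$, and recall that the degree-$s$ component of $\Ext^t_\Lambda(S(i),S(j))$ records the generators of degree $s$ at vertex $j$ of the $t$-th term of the minimal graded projective resolution of $S(i)$, equivalently the degrees in which the syzygy $\Omega^t S(i)$ needs generators.

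\emph{Generation in degrees $0$ and $1$.} Taking $t=1$ in Proposition \ref{char_pqk}(1) gives $\Ext^1_\Lambda(S(i),S(j))_s=0$ for $s\ne 1$, so the top of $\Omega S(i)=\rad P(i)=Je_i$ is concentrated in degree $1$; equivalently $\Lambda_se_i\subseteq\sum_{a+b=s,\,a,b\ge 1}\Lambda_a\Lambda_b\,e_i$ for all $s\ge 2$. An induction on $s$ with base $\Lambda_2=\Lambda_1^2$ then yields $\Lambda_s=\Lambda_1^{\,s}$ for all $s$, i.e. $\Lambda$ is generated by $\Lambda_0$ and $\Lambda_1$. (In the conventions of this paper this is already built in; the argument is recorded for completeness.) In particular $\rad P(i)$ is generated in degree $1$ by the arrows out of $i$, so the start of the minimal resolution is
\[
P^1(i)=\bigoplus_{\alpha\in Q_1,\ s(\alpha)=i}\Lambda e_{t(\alpha)}\ \stackrel{g}{\longrightarrow}\ P^0(i)=\Lambda e_i,
\]
with $g$ sending the generator of the $\alpha$-summand to $\alpha$ and $\Omega^2 S(i)=\Ker g$.

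\emph{Quadraticity ($q\ge 2$).} Proposition \ref{char_pqk}(1) with $t=2$ gives $\Ext^2_\Lambda(S(i),S(j))_s=0$ for $s\ne 2$, so $\Omega^2 S(i)=\Ker g$ is generated over $\Lambda$ by its degree-$2$ component. The key identification is that $(\Ker g)_2$ is the space of quadratic relations based at $i$: the assignment $(\lambda_\alpha)_\alpha\mapsto\sum_\alpha\lambda_\alpha\alpha$ is a bijection from $\bigoplus_\alpha(\Lambda e_{t(\alpha)})_1$ onto $(kQ)_2e_i$ (using the arrow basis), and $(\lambda_\alpha)\in\Ker g$ precisely when $\sum_\alpha\lambda_\alpha\alpha\in R:=I\cap\Lambda_1^{\otimes 2}$. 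Let $I'=(R)$ be the two-sided ideal generated by the quadratic relations, so $I'\subseteq I$; it remains to show $I'=I$, which I would prove by induction on the degree $d$ of a homogeneous $r\in I_d$. The case $d\le 2$ is immediate since $I_0=I_1=0$ and $I_2=R=I'_2$. For $d\ge 3$, using idempotents on both sides we may take $r\in e_jkQe_i$ and write $r=\sum_\alpha r_\alpha\alpha$ over arrows $\alpha$ with $s(\alpha)=i$, where $r_\alpha\in e_jkQe_{t(\alpha)}$ has degree $d-1$. The tuple of images $(\overline{r_\alpha})_\alpha$ lies in $\Ker g$; expressing it through the degree-$2$ generators gives $(\overline{r_\alpha})_\alpha=\sum_k\overline{\mu_k}\,w_k$ with $\mu_k\in(kQ)_{d-2}$ and $w_k\in(\Ker g)_2$ corresponding to quadratic relations $\rho_k=\sum_\alpha\rho_{k,\alpha}\alpha\in R$. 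Setting $r'=\sum_k\mu_k\rho_k\in I'$ and comparing coefficients of each $\alpha$, the element $r_\alpha-\sum_k\mu_k\rho_{k,\alpha}$ maps to $0$ in $\Lambda$, hence lies in $I_{d-1}=I'_{d-1}$ by induction; therefore $r-r'=\sum_\alpha\big(r_\alpha-\sum_k\mu_k\rho_{k,\alpha}\big)\alpha\in I'$, so $r\in I'$. Hence $I=I'$ and $\Lambda=kQ/(R)$ is quadratic.

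The cohomological inputs are immediate from Proposition \ref{char_pqk}, so the bulk of the work — and the place I expect to have to be careful — is the last paragraph: pinning down the isomorphism $(\Ker g)_2\cong Re_i$ explicitly in terms of the chosen arrow bases, and running the degree induction cleanly while keeping track of the left and right idempotents and of the two-sided ideal generated by $R$. Everything else is the standard ``Koszul implies quadratic'' syzygy computation, with the hypothesis $q\ge 2$ entering exactly to force $\Ext^2$, hence the second syzygy $\Omega^2 S(i)$, to be concentrated in degree $2$.
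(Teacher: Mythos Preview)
Your argument is correct and is precisely the standard syzygy/\(\Ext\) computation from \cite{bbk} that the paper invokes: the paper does not give its own proof of this proposition but explicitly omits it, stating that it ``can be easily done by modifying those in \cite{bbk}.'' Your use of Proposition~\ref{char_pqk} to force \(\Omega^1 S(i)\) and \(\Omega^2 S(i)\) to be generated in degrees \(1\) and \(2\), together with the degree induction identifying \((\Ker g)_2\) with \(Re_i\), is exactly that argument carried out in the locally finite setting of the present paper.
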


Now we consider quadratic algebra $\Lambda$.
Take the arrows from $i$ to $j$ in $Q_1$ as a basis of $ e_j \LL_1 e_i$.
Let $R$ be the $\LL_0$-$\LL_0$-sub-bimodule of $\LL_1\otimes \LL_1$ generated by the elements in $\rho$.

Recall that for two graded $\LL_0$-$\LL_0$-modules $M = \bigoplus M_t$ and  $N = \bigoplus N_t$, the gradation of the tensor product is defined by $(M\otimes N)_t =\bigoplus_{t'+t"=t} M_{t'}\otimes N_{t"}$.
As in \cite{bbk,bgs}, we define $\LL_0$-$\LL_0$-sub-bimodules $K^t$ of $\LL_1^{\otimes t}$ by the formulae $K^0 = \LL_0$, $K^1 = \LL_1$, $K^2 = R$ and $K^{t+1} = \LL_1 K^t\cap K^t\LL_1$ for all $t \ge 2$.
$K^t$ is a $\LL_0$-$\LL_0$-bimodule concentrated in degree $t$.
The left Koszul complex of $\Lambda$ is
\eqqc{KoszulComplex}{\cdots \longrightarrow \Lambda \otimes K^t \longrightarrow \cdots \longrightarrow \Lambda \otimes K^1\longrightarrow \Lambda \otimes K^0 \longrightarrow 0. }
The differential is the restriction of the map $\Lambda \otimes \LL_1^{\otimes t} \longrightarrow \Lambda \otimes \LL_1^{\otimes t-1}$, $$a\otimes v_1  \otimes \cdots \otimes v_t \longrightarrow a v_1  \otimes \cdots \otimes v_t,$$ which is the  composite of the following sequence of natural maps
$$ \Lambda \otimes K^t \hookrightarrow \Lambda \otimes \LL_1 K^{t-1} \longrightarrow \Lambda \Lambda_1 \otimes K^{t-1}\hookrightarrow \Lambda \otimes K^{t-1}.$$

Similar to Theorem 2.6.1 in \cite{bgs}, $ \Lambda$ is a Koszul if and only if its left Koszul complex is a projective resolution of $\Lambda_0$.
For each $i \in Q_0$, \eqref{KoszulComplex} restrict to a left Koszul complex for $\Lambda_0 e_i$:
\eqqc{KoszulComplexi}{\cdots \longrightarrow \Lambda \otimes K^t e_i \longrightarrow \cdots \longrightarrow \Lambda \otimes K^1 e_i \longrightarrow \Lambda \otimes K^0 e_i\longrightarrow 0. }
We can also decompose each term as a direct sum of indecomposable projective modules:
$  \Lambda  \otimes K^t e_i =  \bigoplus_{j\in Q_0}\Lambda e_j \otimes e_j  K^t e_i $.
The following proposition is an adaptation of Proposition 3.9 of \cite{bbk}.

\begin{pro}\label{pqkoszul} Assume   $p, q \ge 2$,
let $\Lambda$  be a quadratic algebra. Then $\Lambda$ is $(p, q)$-Koszul  if and only if

\begin{enumerate}
\item $\Lambda_t =0$ for all $t > q$,
\item $K^s =0 $ for all $s > p$, and
\item for each $i\in Q_0$, the only non-zero homology modules of \eqref{KoszulComplexi} are $\Lambda_0 e_i$ in degree $0$, and $0$ or $\Lambda_p \otimes K^q e_i$ in degree $p + q$.
\end{enumerate}
\end{pro}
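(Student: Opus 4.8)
The plan is to reduce the statement to the cohomological characterisation of $(p,q)$-Koszulity given by Proposition~\ref{char_pqk}, and then to run the standard comparison --- as in Theorem~2.6.1 of \cite{bgs} and Proposition~3.9 of \cite{bbk} --- between the left Koszul complex \eqref{KoszulComplexi} of the quadratic algebra $\Lambda$ and the minimal graded projective resolutions of the simple modules $S(i)$, now in the locally finite-dimensional graded setting; throughout I keep the conventions of Definition~\eqref{def_pqK}, so that $\Lambda$ has Loewy length $p+1$ and the relevant Koszul complex has length $q$. I will use freely that, $\Lambda$ being quadratic, each $\Lambda\otimes K^{t}e_i$ is projective and generated in degree $t$, the differential of \eqref{KoszulComplexi} carries the degree-$t$ generators into the radical, the $0$-th homology of \eqref{KoszulComplexi} is $S(i)$, and \eqref{KoszulComplexi} is exact in homological degree $1$ (because $K^{2}=R$). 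The technical heart, which I would isolate as a lemma, is: \emph{for each $i$ and each $m\ge 0$, \eqref{KoszulComplexi} is exact in homological degrees $1,\dots,m$ if and only if $\Ext^{t}_{\Lambda}(S(i),S(j))_{s}=0$ for all $j\in Q_0$ and all $s\ne t\le m+1$; and in that case $\Lambda\otimes K^{t}e_i$ is the $t$-th term of the minimal projective resolution of $S(i)$ for every $t\le m+1$, so that $\Ker\bigl(\Lambda\otimes K^{m+1}e_i\to\Lambda\otimes K^{m}e_i\bigr)=\Omega^{m+2}S(i)$.} One proves this by induction on $m$ as in \cite{bgs}: linearity of \eqref{KoszulComplexi} forces a minimal resolution agreeing with it through homological degree $m$ to agree at degree $m+1$ exactly when the $(m+1)$-st syzygy is generated in its lowest internal degree, i.e.\ exactly when $\Ext^{m+1}_{\Lambda}$ is concentrated on the diagonal; here ``finite-dimensional'' must be replaced throughout by ``with finite-dimensional graded pieces'', which the standing local finiteness hypotheses, together with the existence of projective covers in $\grm\Lambda$, provide.

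Granting the lemma, I would argue the forward implication as follows. If $\Lambda$ is $(p,q)$-Koszul, Proposition~\ref{char_pqk} gives $\Ext^{t}_{\Lambda}(S(i),S(j))_{s}=0$ for $s\ne t\le q$ and $\Ext^{q+1}_{\Lambda}(S(i),S(j))_{s}=0$ for $s\ne p+q$, while $\Lambda_{t}=0$ for $t>p$ is part of Definition~\eqref{def_pqK}. Since $p\ge2$, we have $q+1\ne p+q$, so $\Ext^{q+1}_{\Lambda}(S(i),S(j))_{q+1}=0$ for all $i,j$; as $\Ext^{t}_{\Lambda}$ is diagonal for $t\le q$, these spaces are dual to the $e_jK^{q+1}e_i$, so $K^{q+1}=0$ and hence $K^{s}=0$ for all $s>q$, making \eqref{KoszulComplexi} finite of length $\le q$. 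The lemma with $m=q-1$ now makes $\Lambda\otimes K^{t}e_i$ the $t$-th term of the minimal resolution of $S(i)$ for $t\le q$, so \eqref{KoszulComplexi} has vanishing homology in homological degrees $1,\dots,q-1$ and (using $K^{q+1}=0$) $H_{q}=\Ker(\Lambda\otimes K^{q}e_i\to\Lambda\otimes K^{q-1}e_i)=\Omega^{q+1}S(i)$. Since $\Ext^{q+1}_{\Lambda}(S(i),-)$ lives in internal degree $p+q$, this syzygy is semisimple and concentrated in degree $p+q$; and because $\Lambda\otimes K^{q}e_i$ is generated in degree $q$ while $\Lambda_{t}=0$ for $t>p$, its degree-$(p+q)$ component is exactly $\Lambda_{p}\otimes K^{q}e_i$. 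Hence $H_{q}=\Lambda_{p}\otimes K^{q}e_i$ (possibly zero), and the three stated conditions all hold.

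For the converse, suppose the three conditions hold. Then \eqref{KoszulComplexi} is a finite complex (of length $\le q$), exact in homological degrees $1,\dots,q-1$, whose only remaining homology is $\Lambda_{p}\otimes K^{q}e_i$ --- a semisimple module concentrated in internal degree $p+q$, which necessarily sits in homological degree $q$ (internal degree $p+q$ being unreachable in lower homological degree, as $\Lambda_{t}=0$ for $t>p$). The lemma with $m=q-1$ identifies $\Lambda\otimes K^{t}e_i$ with the $t$-th term of the minimal resolution of $S(i)$ for $t\le q$ and gives $\Omega^{q+1}S(i)=H_{q}=\Lambda_{p}\otimes K^{q}e_i$; being semisimple and concentrated in internal degree $p+q$, this syzygy forces the $(q+1)$-st term of the minimal resolution --- its projective cover --- to be generated in degree $p+q$. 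Therefore $\Ext^{t}_{\Lambda}(S(i),S(j))_{s}=0$ for $s\ne t\le q$, while $\Ext^{q+1}_{\Lambda}(S(i),S(j))$ is concentrated in internal degree $p+q$; together with $\Lambda_{t}=0$ for $t>p$, this is precisely the hypothesis of Proposition~\ref{char_pqk}, so $\Lambda$ is $(p,q)$-Koszul. (In passing, the complex required by Definition~\eqref{def_pqK} can now be taken to be \eqref{KoszulComplexi} itself, with $W(i)=\Lambda_{p}\otimes K^{q}e_i$.)

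The step I expect to be the main obstacle is the comparison lemma in the locally finite-dimensional graded setting, and in particular establishing that the one surviving homology of \eqref{KoszulComplexi} is genuinely an \emph{equality of modules} $H_{q}=\Lambda_{p}\otimes K^{q}e_i$ --- i.e.\ that nothing persists in a lower internal degree and that the coefficient module is precisely $\Lambda_{p}$ --- rather than merely a statement about which internal degrees are occupied. The underlying homological algebra is routine over a finite-dimensional algebra \cite{bgs,bbk}; the real work is to check that the standing local-finiteness conventions make every step carry over verbatim --- notably that $\grm\Lambda$ has projective covers, and that $\Ext_{\Lambda}$ in the relevant range of degrees is computed by the finite complex \eqref{KoszulComplexi}.
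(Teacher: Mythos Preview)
Your approach is exactly what the paper intends: it gives no proof at all, merely remarking that the proposition ``is an adaptation of Proposition~3.9 of \cite{bbk}'' (and earlier that such proofs ``can be easily done by modifying those in \cite{bbk}''), and your proposal is precisely that adaptation --- the comparison lemma between the Koszul complex and the minimal resolution (as in \cite{bgs} Theorem~2.6.1 and \cite{bbk} Proposition~3.9), combined with Proposition~\ref{char_pqk}, with the only new care being the locally finite hypotheses.

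One point worth flagging: the statement as printed in the paper has $p$ and $q$ interchanged in conditions (1) and (2) --- it should read $\Lambda_t=0$ for $t>p$ and $K^s=0$ for $s>q$, consistent with Definition~\eqref{def_pqK}, with condition (3), and with \cite{bbk} --- and your proof silently proves this corrected version, which is the right thing to do.
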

In this case the left Koszul complex for $\Lambda$ provides the first $q+1$ terms of the minimal projective resolution of $\Lambda_0$.
There is another description of the left and right Koszul complexes associated with the quadratic algebra $\Lambda$ in 2.8 of \cite{bgs}, which makes use of the quadratic dual of $\Lambda$.

There is a decomposition of $\LL_1 =\bigoplus_{i,j} e_j \LL_1 e_i$ as $\LL_0$-$\LL_0$-bimodule, and we have that $D e_j \LL_1 e_i \simeq e_i \LL_1 e_j$ as $\LL_0$-$\LL_0$-bimodule.
Thus $D_* \LL_1$ defines the opposite quiver $Q^{op}$ of $Q$, with $Q_0^{op} =Q_0$ and $Q_1^{op}=\cup_{(i,j)\in Q_0\times Q_0} Q_1^{ *ij}$, with $Q_1^{ *ij} =\{\alpha^*|\alpha\in Q_1^{ ji}\} $ whose elements form a dual basis of the arrows in $Q_1^{ ji}$.
Note that $(D_* \LL_1 \otimes \cdots \otimes D_*\LL_1)$ is identified with $D_* (\LL_1 \otimes \cdots \otimes \LL_1)$, via $$f_t\otimes \cdots \otimes f_1(x_1\otimes \cdots \otimes x_t) = f_t( \cdots f_2(f_1(x_1) x_2) \cdots x_t) =f_1(x_1) f_2(x_2) \cdots  f_t(x_t),$$ for $f_1,\ldots,  f_t \in D_* \LL_1$ and $x_1,\ldots,  x_t \in \LL_1$.
The  quadratic dual algebra of $\Lambda$, denoted $\Lambda^!$, is defined by the formula $$\Lambda^! = T_{\LL_0} (D_*\LL_1) /(R^{\perp}),$$
where $R^{\perp} =\{f \in D_*(\LL_1^2)| f(x)=0 , \mbox{ for any } x\in R\} \subset D_*(\LL_1^2) = (D_*\LL_1)^2$ is the annihilator of $R \subset \LL_1^2$.
The following proposition follows directly from the definition.
\begin{pro} If $\Lambda$ is locally finite, so is $\Lambda^!$ and
$$ (\Lambda^!)^! = \Lambda.$$
\end{pro}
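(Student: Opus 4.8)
The plan is to reduce both assertions to elementary finite-dimensional linear algebra by working componentwise with respect to the orthogonal idempotents $e_i$, $i\in Q_0$. The point is that the local dual $D_*$, restricted to each component $e_j(-)e_i$, is just the ordinary $k$-dual of a finite-dimensional vector space, so the two facts we need --- that a finite-dimensional space is canonically isomorphic to its double dual, and that $(W^{\perp})^{\perp}=W$ for a subspace $W$ of such a space --- propagate to the (possibly infinite-dimensional) bimodules $\Lambda_1$ and $\Lambda_1^{\otimes 2}$ by taking direct sums over $Q_0\times Q_0$.

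First I would check local finiteness of $\Lambda^!$. By construction $(\Lambda^!)_0=\Lambda_0$ is a direct sum of copies of $k$, and $(\Lambda^!)_1=D_*\Lambda_1$ has associated quiver the opposite quiver $Q^{op}$; since $\Lambda$ is locally finite, $Q$ --- hence also $Q^{op}$ --- has only finitely many arrows at each vertex, so each $e_j(D_*\Lambda_1)e_i$ is finite-dimensional (its dimension equals that of $e_i\Lambda_1 e_j$). Given $x\in\Lambda^!$, write it as a finite $k$-linear combination of images of paths in $Q^{op}$, let $S\subseteq Q_0$ be the finite set of vertices these paths meet, and put $e=\sum_{i\in S}e_i$. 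Then $x\in e\Lambda^! e$ and $e(\Lambda^!)_1 e=\bigoplus_{i,j\in S}e_j(D_*\Lambda_1)e_i$ is a finite direct sum of finite-dimensional spaces, hence finite-dimensional; so $\Lambda^!$ is locally finite.

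For $(\Lambda^!)^!=\Lambda$ we are in the quadratic setting, so $\Lambda=T_{\Lambda_0}(\Lambda_1)/(R)$ with $R\subseteq\Lambda_1\otimes\Lambda_1$ the sub-bimodule generated by $\rho$. I would first settle the degree-one part: decomposing $\Lambda_1=\bigoplus_{i,j}e_j\Lambda_1 e_i$ into finite-dimensional pieces and using the canonical double-dual isomorphism on each, together with the identification $(D_*\Lambda_1)^{\otimes t}=D_*(\Lambda_1^{\otimes t})$ recalled before the definition of $\Lambda^!$, one obtains a canonical isomorphism of graded algebras $T_{\Lambda_0}(\Lambda_1)\cong T_{\Lambda_0}(D_*D_*\Lambda_1)$, which on arrows sends $\alpha\in Q_1^{ji}$ to $(\alpha^*)^*$. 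Next I would settle the relations: the components $e_j(\Lambda_1\otimes\Lambda_1)e_i=\bigoplus_{l}e_j\Lambda_1 e_l\otimes e_l\Lambda_1 e_i$ are again finite-dimensional because $Q$ is locally finite, so $(R^{\perp})^{\perp}=R$ holds on each of them, hence for $R$ itself. Combining the two,
\[
(\Lambda^!)^! \;=\; T_{\Lambda_0}\bigl(D_*D_*\Lambda_1\bigr)\big/\bigl((R^{\perp})^{\perp}\bigr)\;=\;T_{\Lambda_0}(\Lambda_1)/(R)\;=\;\Lambda .
\]

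As the text's "follows directly from the definition" suggests, the whole argument is bookkeeping; the only place demanding care is that $\Lambda_1$ and $\Lambda_1^{\otimes 2}$ are not assumed finite-dimensional, so the double-dual isomorphism and the identity $(W^{\perp})^{\perp}=W$ cannot be applied to them before splitting into $e_i$-components, and one must verify that the componentwise isomorphisms respect the bimodule actions and the tensor multiplication --- i.e. that forming $(\Lambda^!)^!$ genuinely reassembles the original product of $\Lambda$. I expect this compatibility to be the only non-formal point, and it follows at once from the way $D_*(\Lambda_1^{\otimes t})$ was identified with $(D_*\Lambda_1)^{\otimes t}$.
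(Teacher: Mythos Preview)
Your argument is correct and is exactly the routine verification the paper has in mind; the paper offers no proof beyond the sentence ``follows directly from the definition,'' and your componentwise use of the finite-dimensional identities $V\cong V^{**}$ and $(W^{\perp})^{\perp}=W$ on the pieces $e_j\Lambda_1 e_i$ and $e_j(\Lambda_1\otimes\Lambda_1)e_i$ is the natural way to make that sentence precise. There is nothing to add.
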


Thus the bound quiver of $\Lambda^!$ is the opposite quiver $Q^{op}= (Q_0, Q_1^{op}, \rho^*)$ of the bound quiver $Q$ of $\Lambda$ with the dual basis $\rho^*$ of a chosen base $\rho$ of $R$ in $(D_* \LL_1)^2$ as the relations.
Its connection with the Koszul complexes of $\Lambda$ is made via the formulae
$$ D_*(\Lambda^!_t) = K^t .$$

Let $\mu: \Lambda^!_{t-1} \otimes\Lambda^!_1 \longrightarrow \Lambda^!_t$ be the multiplication map, $\partial$ be its dual map, then
$$\partial: D_* \Lambda^!_t \longrightarrow D_* (\Lambda^!_{t-1} \otimes\Lambda^!_1) =D_* \Lambda^!_1 \otimes D_* \Lambda^!_{t-1} = \LL_1 \otimes K^{t-1},$$
by identifying $D_* \Lambda^!_1$ with $\Lambda_1$.

\bigskip

Thus the left Koszul complex of $\Lambda$ is isomorphic to
$$\cdots\longrightarrow \Lambda \otimes D_*(\Lambda^!_2) \longrightarrow \Lambda \otimes D_*(\Lambda^!_1)\longrightarrow \Lambda \otimes D_*(\Lambda^!_0) =\Lambda \longrightarrow 0, $$
where the differential $\partial: \Lambda \otimes D_*(\Lambda^!_t) \longrightarrow \Lambda \otimes D_*(\Lambda^!_{t-1})$ is induced by the dual of the multiplication map $\Lambda^!_{t-1} \otimes\Lambda^!_1 \longrightarrow \Lambda^!_t$, and the identification of $D_*(\Lambda^!_1)$ with $\Lambda_1$.
Note that $\Lambda \otimes D_*(\Lambda^!)$ is a right $\Lambda \otimes {\Lambda^!}^{op}$-module with action defined by $(x\otimes y)(a \otimes a') = x a\otimes a' y $.
Write $u= \sum_{\alpha \in Q_1} \alpha \otimes \alpha^*$ formally, write $(e\otimes e) u = \sum_{\alpha \in Q_1, e \alpha \neq 0} \alpha \otimes \alpha^* \in  \Lambda \otimes \Lambda^!$ for any idempotent $e$ in $\Lambda_0 = D_* \Lambda^!_0 $.
$(e\otimes e) u\in \Lambda \otimes D_*(\Lambda^!)$.
Define $(\sum_i x_i\otimes y_i) u =(\sum_i x_i\otimes y_i) ((e\otimes e) u )$ for $x_i \in e \Lambda e  $.
Then $(\Lambda \otimes D_*(\Lambda^!_t)) u \subset \Lambda \otimes D_*(\Lambda^!_{t-1})$ and right multiplication by $u$ defines exactly the differential of the Koszul complex.

If $\Lambda$ is $(p,q)$-Koszul, we have that $\Lambda^!_t = 0$ for $t > q$.
The only non-zero homology of the above  complex is $\LL_0 e_i$ in degree $0$ and $ \Lambda_p \otimes D_*( \Lambda^!_q )e_i $ in degree $p + q$.

The above complex decomposes into $p+q+1$ homogeneous sub-complexes as (see \cite{bbk})
\tiny
$$
\arr{ccccccccc}{ \Lambda_0 \otimes D_*(\Lambda^!_q)e_i& & \cdots & &  \Lambda_0  \otimes D_*(\Lambda^!_2) e_i & & \Lambda_0  \otimes  D_*(\Lambda^!_1) e_i& &  \Lambda_0 \otimes D_*(\Lambda^!_0)e_i =\LL_0 e_i\\
&\searrow&&\searrow&&\searrow&&\searrow&\\
\Lambda_1 \otimes D_*(\Lambda^!_q)e_i& & \cdots & &  \Lambda_1  \otimes D_*(\Lambda^!_2) e_i & & \Lambda_1  \otimes  D_*(\Lambda^!_1) e_i& &  \Lambda_1 \otimes D_*(\Lambda^!_0)e_i \\
&\searrow&&\searrow&&\searrow&&\searrow&\\
\Lambda_2 \otimes D_*(\Lambda^!_q)e_i& & \cdots & &  \Lambda_2  \otimes D_*(\Lambda^!_2) e_i & & \Lambda_2  \otimes  D_*(\Lambda^!_1) e_i& &  \Lambda_2 \otimes D_*(\Lambda^!_0)e_i \\
&\searrow&&\searrow&&\searrow&&\searrow&\\
\vdots &\vdots&&&\vdots&&\vdots&&\vdots\\
&\searrow&&\searrow&&\searrow&&\searrow&\\
\Lambda_p \otimes D_*(\Lambda^!_q)e_i& & \cdots & &  \Lambda_p  \otimes D_*(\Lambda^!_2) e_i & & \Lambda_p  \otimes  D_*(\Lambda^!_1) e_i& &  \Lambda_p \otimes D_*(\Lambda^!_0)e_i.
}
$$
\normalsize

The total space of this complex can be viewed as a bigraded $\Lambda$-$\Lambda^!$-bimodule with a differential of bidegree $(1,-1)$ which has non-zero homology only in bidegrees $(0, 0)$ and $(p, q)$.
Apply the exact functor
$\bigoplus_{i\in Q_0} \hhom_{\LL_0}(\hhom_{\LL_0}(e_i \LL_0, \quad), {}_{\LL_0}\LL_0)$, we obtain a bigraded $\Lambda^!$-$\Lambda$-bimodule, with a differential of bidegree $(1,-1)$ which has non-zero homology only in bidegrees $(0, 0)$ and $(q, p)$, and which is the total space of the complex
$$ 0\longrightarrow \Lambda^! \otimes D_*\Lambda_p e_i \longrightarrow \cdots \longrightarrow \Lambda^! \otimes D_*\Lambda_{1} e_i\longrightarrow \Lambda^! \otimes D_*\Lambda_{0} e_i \longrightarrow  0.
$$
This complex has non-zero homology only in degree $0$, where it is $\Lambda^!_0 e_i$, and possibly in degree $p + q$,  which is $\Lambda^!_q \otimes D_* \Lambda_p e_i$. Together with the fact that $\Lambda^!_t = 0$ for all $t > q$, this proves the following result from \cite{bbk}.

\begin{pro}\label{pqalgdual}
If $\Lambda$ is a $(p, q)$-Koszul algebra with $p, q \ge 2$, then $\Lambda^!$ is a $(q, p)$-Koszul algebra.
\end{pro}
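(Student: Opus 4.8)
The plan is to invoke the duality construction developed immediately before the statement, namely the passage from the left Koszul complex of $\Lambda$ to the complex
\[
0\longrightarrow \Lambda^! \otimes D_*\Lambda_p e_i \longrightarrow \cdots \longrightarrow \Lambda^! \otimes D_*\Lambda_{1} e_i\longrightarrow \Lambda^! \otimes D_*\Lambda_{0} e_i \longrightarrow 0,
\]
and to show this is exactly the object whose existence is required in the definition of $(q,p)$-Koszul for the simple $\Lambda^!$-module $S^!(i)=\Lambda^!_0 e_i$. First I would record the three facts supplied by the hypotheses and the preceding discussion: (a) since $\Lambda$ is $(p,q)$-Koszul with $p,q\ge 2$, Proposition \ref{bbk1} gives that $\Lambda$ is quadratic, so $\Lambda^!$ is defined and $(\Lambda^!)^!=\Lambda$; (b) the left Koszul complex of $\Lambda$ identified with $\cdots\to\Lambda\otimes D_*(\Lambda^!_t)\to\cdots$ has, for each $i$, non-zero homology only in degree $0$ (where it is $\Lambda_0 e_i$) and possibly in degree $p+q$ (where it is $\Lambda_p\otimes D_*(\Lambda^!_q)e_i$), together with $\Lambda^!_t=0$ for $t>q$; (c) bigrading the total space as a $\Lambda$-$\Lambda^!$-bimodule with differential of bidegree $(1,-1)$.

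The central step is the application of the exact contravariant functor $F=\bigoplus_{i\in Q_0}\hhom_{\LL_0}(\hhom_{\LL_0}(e_i\LL_0,-),{}_{\LL_0}\LL_0)$ to the bigraded bimodule. I would verify that $F$ is exact (it is, being essentially the local $k$-dual on each homogeneous finite-dimensional piece) and that it carries $\Lambda\otimes D_*(\Lambda^!_t)e_i$, which sits in the bidegrees $(s,t)$ for $0\le s\le p$, to $\Lambda^!_t\otimes D_*\Lambda_s e_i$ — i.e. it swaps the two gradings and dualizes the $\Lambda$-factor while undoing the $D_*$ on the $\Lambda^!$-factor. Consequently the differential of bidegree $(1,-1)$ becomes one of bidegree $(-1,1)$ in the original bigrading, which when we now read the $\Lambda^!$-grading as homological degree is again a differential raising homological degree by one; reindexing, the resulting complex is the displayed one above, and because $F$ is exact its homology is $F$ applied to the homology of the original, hence concentrated in bidegrees $(0,0)$ and $(p,q)$ — which for the new complex means homological degree $p$ has homology only there, giving $\Lambda^!_q\otimes D_*\Lambda_p e_i$ in total degree $p+q$, and homological degree $0$ gives $\Lambda^!_0 e_i$. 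I would also note that each term $\Lambda^!\otimes D_*\Lambda_s e_i$ is projective over $\Lambda^!$ and generated in degree $s$, matching condition (2) (equivalently the criterion of Proposition \ref{pqkoszul}). Finally, $\Lambda^!_t=0$ for $t>q$ supplies condition (1) of the definition of $(q,p)$-Koszulity, namely that $\Lambda^!$ has Loewy bound $q$; and $K^{(\Lambda^!),s}=D_*(\Lambda_s)$ vanishes for $s>p$ since $\Lambda_s=0$ for $s>p$, using $(\Lambda^!)^!=\Lambda$, which gives condition (2) of Proposition \ref{pqkoszul} for $\Lambda^!$. Assembling these, $\Lambda^!$ satisfies the criterion to be $(q,p)$-Koszul.

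The main obstacle I anticipate is the bookkeeping in the functor-application step: keeping precise track of which index plays the role of homological degree versus internal grading before and after applying $F$, and checking that $F$ genuinely sends the Koszul differential (right multiplication by $u=\sum_\alpha \alpha\otimes\alpha^*$) to the analogous differential for $\Lambda^!$ (right multiplication by the corresponding element $\sum_\alpha \alpha^*\otimes\alpha$ in $\Lambda^!\otimes D_*\Lambda$). This requires the canonical identifications $D_*(\Lambda^!_1)\cong\Lambda_1$ and $D_*(\Lambda_1)\cong\Lambda^!_1$ to be compatible, i.e. that the dual-basis pairing used to define $Q^{op}$ is symmetric in the appropriate sense; since $(\Lambda^!)^!=\Lambda$ this is automatic, but it must be stated. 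Everything else is a transcription of the argument already carried out in \cite{bbk} for the finite-dimensional case, and by Proposition \ref{char_pqk} and Proposition \ref{pqkoszul} the verification reduces to the homology computation just described, so I would keep the write-up brief and refer to \cite{bbk} for the routine parts.
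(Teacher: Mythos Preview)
Your proposal is correct and follows essentially the same route as the paper: the paper's argument is precisely the passage from the bigraded $\Lambda$-$\Lambda^!$-bimodule Koszul complex to its image under the exact functor $\bigoplus_{i\in Q_0}\hhom_{\LL_0}(\hhom_{\LL_0}(e_i\LL_0,-),{}_{\LL_0}\LL_0)$, yielding the displayed complex with homology only at the ends, together with $\Lambda^!_t=0$ for $t>q$. Your write-up is in fact more explicit than the paper's (which simply records the outcome of applying the functor and refers to \cite{bbk}); the bookkeeping concerns you raise are legitimate but routine, and your suggestion to keep the write-up brief and cite \cite{bbk} matches what the paper does.
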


The facts that the left Koszul complex  provides the first $q+1$ terms of the minimal projective resolution of $\LL_0$ as a left $\Lambda$-module and the above isomorphism of complexes mean that $$\Lambda^!_t \simeq \bigoplus_{i,j \in Q_0} \Ext_{\Lambda}^t (\LL_0 e_i, \LL_0e_j),$$ for $0\le t \le q$.
Write $$\mathcal E(\Lambda) = \bigoplus_{t=0}^q \bigoplus_{i,j \in Q_0} \Ext_{\Lambda}^t (\LL_0 e_i, \LL_0 e_j),$$
Then we have that $\mathcal E(\Lambda) = {\Lambda^!}^{op}$, we call $\mathcal E(\Lambda)$ {\em quadratic dual} of $\Lambda$.

Recall that the Yoneda-Ext algebra $E(\Lambda)$ is defined as the vector space $$ E(\Lambda) = \bigoplus_{t=0}^{\infty} \bigoplus_{i,j\in Q_0} \Ex{t}{S(i), S(j)},$$
with the multiplication defined by the Yoneda product.
It follows from \cite{bbk} that the following hold:
\begin{pro}\label{Yoneda}
If $\Lambda$ is a left $(p, q)$-Koszul algebra with $p > 1$, then $$\mathcal E(\mathcal E(\Lambda))=\Lambda$$ and $\mathcal E(\Lambda)$ is a subalgebra of $E(\Lambda)$ generated by $\mathcal E_0(\Lambda)$ and $\mathcal E_1(\Lambda)$.

Furthermore, $E(\Lambda)$ is generated by $\mathcal E(\Lambda)$ and $ \bigoplus_{i,j \in Q_0} \Ext_{\Lambda}^{q+1} (\LL_0 e_i, \LL_0 e_j)$.
\end{pro}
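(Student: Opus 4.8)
The plan is to reduce the three assertions to the identification $\mathcal E(\Lambda)={\Lambda^!}^{op}$ recalled just before the statement, together with the fact that, for a $(p,q)$-Koszul algebra, the minimal projective resolution of each graded simple module is the left Koszul complex in homological degrees $0,\dots,q$ and then ``restarts'' at degree $q+1$. I carry out the argument for $2\le q<\infty$; the case $q=\infty$, in which $\Lambda$ is Koszul, reduces to the classical facts that $E(\Lambda)={\Lambda^!}^{op}$ is generated in degrees $0$ and $1$ and that quadratic duality is an involution.

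First I would record the structural input. As recalled above, for a $(p,q)$-Koszul $\Lambda$ the left Koszul complex $\Lambda\otimes D_*(\Lambda^!_t)e_i$, with differential given by right multiplication by $u=\sum_{\alpha\in Q_1}\alpha\otimes\alpha^*$, supplies the first $q+1$ terms of the minimal projective resolution of $S(i)=\Lambda_0 e_i$; by Proposition~\ref{pqkoszul} its only nonzero homologies are $S(i)$ in internal degree $0$ and $\Lambda_p\otimes D_*(\Lambda^!_q)e_i$ in internal degree $p+q$, so $\Omega^{q+1}(S(i))\cong\Lambda_p\otimes D_*(\Lambda^!_q)e_i$. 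Being annihilated by $\Lambda_{\ge 1}$, this syzygy is a semisimple module concentrated in internal degree $p+q$, say isomorphic to $\bigoplus_j S(j)^{m_{ij}}$ sitting in degree $p+q$. Therefore, from homological degree $q+1$ onward, the minimal resolution of $S(i)$ is, up to an internal shift by $p+q$, the direct sum over $j$ of $m_{ij}$ copies of the minimal resolution of $S(j)$; in particular $P^{q+1+r}(S(i))\cong\bigoplus_j P^r(S(j))^{m_{ij}}$, internally shifted by $p+q$, for every $r\ge 0$, whence $\Ext_\Lambda^{q+1+r}(S(i),S(l))\cong\bigoplus_j\Ext_\Lambda^{r}(S(j),S(l))^{m_{ij}}$ with the same internal shift. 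Combining this with Proposition~\ref{char_pqk} (which gives $\Ext_\Lambda^{t}$ concentrated in internal degree $t$ for $t\le q$) and inducting on $m$, one obtains that each $\Ext_\Lambda^{m}(S(i),S(l))$ is concentrated in a single internal degree, and that degree strictly exceeds $m$ as soon as $m>q$, precisely because $p>1$.

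Granting this, the first two assertions are formal. The Yoneda product preserves internal degree, so a product of classes of $\bigoplus_{t\le q}\Ext_\Lambda^{t}$ of total homological degree $s>q$ would lie in $\Ext_\Lambda^{s}$ in internal degree $s$, which vanishes by the previous step; hence $\mathcal E(\Lambda)=\bigoplus_{t\le q}\Ext_\Lambda^{t}$ is closed under the product, i.e.\ a subalgebra of $E(\Lambda)$, and with this product it is ${\Lambda^!}^{op}$ by the identification recalled above — in particular generated by its components of degrees $0$ and $1$, since $\Lambda^!$ is a quotient of $T_{\Lambda_0}(D_*\Lambda_1)$ (equivalently, since $\mathcal E(\Lambda)$ is $(q,p)$-Koszul by Propositions~\ref{pqalgdual} and~\ref{lrpq} and one applies Proposition~\ref{bbk1}). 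For $\mathcal E(\mathcal E(\Lambda))$, put $B=\mathcal E(\Lambda)={\Lambda^!}^{op}$; it is $(q,p)$-Koszul and, as $p\ge 2$, quadratic, so the construction recalled before the statement applies verbatim to $B$ and gives $\mathcal E(\mathcal E(\Lambda))=(B^!)^{op}=\bigl(({\Lambda^!}^{op})^!\bigr)^{op}$. Since forming the quadratic dual commutes with passing to the opposite algebra and $(\Lambda^!)^!=\Lambda$, we get $({\Lambda^!}^{op})^!=((\Lambda^!)^!)^{op}=\Lambda^{op}$, hence $\mathcal E(\mathcal E(\Lambda))=(\Lambda^{op})^{op}=\Lambda$.

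For the last assertion I would observe that the isomorphism $\Ext_\Lambda^{q+1+r}(S(i),S(l))\cong\bigoplus_j\Ext_\Lambda^{r}(S(j),S(l))^{m_{ij}}$ of the second paragraph is realized by the Yoneda product: the class in $\Ext_\Lambda^{q+1}(S(i),S(j))$ picking out a given copy of $S(j)$ inside $\Omega^{q+1}(S(i))$, composed with a class in $\Ext_\Lambda^{r}(S(j),S(l))$, is exactly the corresponding component of $\Ext_\Lambda^{q+1+r}(S(i),S(l))$. Hence $\Ext_\Lambda^{q+1+r}(S(i),S(l))=\sum_j\Ext_\Lambda^{q+1}(S(i),S(j))\cdot\Ext_\Lambda^{r}(S(j),S(l))$, and an induction on $m$ — base cases $m\le q+1$, where $\Ext_\Lambda^{m}\subseteq\mathcal E(\Lambda)$ for $m\le q$ while $\Ext_\Lambda^{q+1}$ is a prescribed generator, and inductive step $m=q+1+r$ with $1\le r<m$ — shows $E(\Lambda)$ is generated by $\mathcal E(\Lambda)$ together with $\bigoplus_{i,j\in Q_0}\Ext_\Lambda^{q+1}(\Lambda_0 e_i,\Lambda_0 e_j)$. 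I expect the main obstacle to be the one in the second paragraph: making rigorous that the minimal resolution genuinely restarts as $\bigoplus_j P^\bullet(S(j))^{m_{ij}}$ and that the Yoneda product implements the resulting direct-sum decomposition; once that is settled, the rest is bookkeeping with internal degrees and the hypothesis $p>1$.
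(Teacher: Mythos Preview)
Your argument is correct and complete in outline; the point you flag as the potential obstacle—that the minimal resolution of $S(i)$ genuinely restarts as $\bigoplus_j P^\bullet(S(j))^{m_{ij}}$ at homological degree $q+1$, and that Yoneda product with the projection maps $\Omega^{q+1}(S(i))\to S(j)$ implements the resulting decomposition of $\Ext^{q+1+r}$—is standard once one observes that $\Omega^{q+1}(S(i))$ is semisimple (so that $\Ext^{q+1}(S(i),S(j))=\hhom(\Omega^{q+1}(S(i)),S(j))$ has the obvious basis of projections and dimension shifting is clean).

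As for comparison with the paper: there is nothing to compare. The paper does not give a proof of this proposition; it simply records that the statement follows from \cite{bbk}. Your write-up is essentially the argument one extracts from \cite{bbk} (periodicity of the resolution via the semisimple $(q{+}1)$st syzygy, combined with the internal-degree bookkeeping that uses $p>1$), so you are supplying what the paper omits rather than taking a different route. One small remark: in your induction for the final assertion you write the step as $m=q+1+r$ with $1\le r<m$, but of course the case $r=0$ is needed too (it is the base case $m=q+1$), and your treatment of the concentration degrees is cleanest if you record explicitly that $\Ext_\Lambda^{a(q+1)+r}$ sits in internal degree $a(p+q)+r$, which makes the inequality $a(p+q)+r>a(q+1)+r$ for $a\ge 1$ and $p>1$ transparent.
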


Let $\Lambda$ be a $(p,q)$-Koszul algebra.
Define $$\mathcal E(M) = \bigoplus_{t=0}^q \bigoplus_{ i, j \in Q_0} \Ext_{\Lambda}^t (M, \LL_0 e_j), $$ for $M \in \grm \Lambda$.

Thus, for each $i$, $$\mathcal E(\Lambda)_t e_i = \bigoplus_{j \in Q_0} \Ext_{\Lambda}^t (\LL_0 e_i, \LL_0 e_j),$$
for $0\le t \le q+1$.

\section{$n$-translation quivers and $n$-translation algebras}

Let $n$ be a non-negative integer, and let $Q = (Q_0, Q_1, \rho)$ be a bound quiver with {\em homogeneous relations}, that is, elements in $\rho$ are linear combinations of the paths of the same length.
Assume that a bijective map $\tau: Q_0 \setminus \mathcal P \longrightarrow Q_0 \setminus \mathcal I$ is defined for two subsets $\mathcal P, \mathcal I \subset Q_0$.
$Q$ is called an {\em $n$-translation quiver} if it satisfies the following conditions:

\begin{enumerate}
\item[1.] For each $i$ in $Q_0 \setminus \mathcal P $, there is a bound path of length $n+1$ form $\tau i$ to $i$, and any maximal bound path is of length $n+1$ from  $\tau i$ in $Q_0 \setminus \mathcal I$ to $i$, for some vertex $i$ in $Q_0 \setminus \mathcal P $.

\item[2.] Two bound paths of length $n+1$ from $\tau i$ to $i$ are linearly dependent, for any $i \in Q_0 \setminus \mathcal P$.

\item[3.] For each $i \in Q_0 \setminus \mathcal P$ and $j\in Q_0$, any bound element $u$ which is linear combination of paths of the same length $t\le n+1$ from $j$ to $i$, there is a path $q$ of length $n+1-t$ from $\tau i$ to $j$ such that  $uq$ a bound element.

\item[4.] For each $i \in Q_0 \setminus \mathcal I$ and $j\in Q_0$, any bound element $u$ which is linear combination of paths of the same length $t\le n+1$ from $i$ to $j$, there is a path $p$ of length $n+1-t$ from $j$ to $\tau i$ such that  $uq$ a bound element.
\end{enumerate}

In this case $\tau$ is called the {\em $n$-translation} of $Q$, we also write it as $\tau_{[n]}$ when we need to specify $n$.
The vertices in $\mathcal P$ are called {\em projective vertices} and vertices in $\mathcal I$ are called {\em injective vertices}.

\begin{lem}\label{nondegbil}
Assume that $Q$ is a bound quiver satisfying the first two condition in the definition.
Let $\LL = k(Q)$ be the algebra given by the bound quiver.
Then 3. and 4. is equivalent to the following condition:
\begin{enumerate}
\item[{\em (5).}] For each $i\in Q_0\setminus \mathcal P$ and each $j\in Q_0$ such that there is a bound path from $j$ to $i$ of length $t$ or there is a bound path from $\tau i$ to $j$, then the multiplication of $\LL$ defines a non-degenerated bilinear from $e_i\LL_t e_{j} \times e_j\LL_{n+1-t} e_{\tau i} $ to $e_i\LL_{n+1} e_{\tau i} $.
\end{enumerate}
\end{lem}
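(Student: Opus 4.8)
The plan is to translate the combinatorial conditions 3 and 4 into linear algebra about the multiplication of $\LL = k(Q)$, via the dictionary: a \emph{bound element which is a linear combination of paths of the same length $t$ from $j$ to $i$} is exactly a nonzero element of $e_i\LL_t e_j$, and ``$uq$ is a bound element'' means $uq\neq 0$ in $\LL$. As preliminaries, condition 1 forces every bound path to have length at most $n+1$ (otherwise it would extend to a maximal bound path of length larger than $n+1$), so $\LL_t=0$ for $t>n+1$; and for $i\in Q_0\setminus\mathcal P$, condition 1 gives a nonzero element of $e_i\LL_{n+1}e_{\tau i}$ while condition 2 shows this space is one-dimensional. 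Hence for all $i\in Q_0\setminus\mathcal P$, $j\in Q_0$ and $0\le t\le n+1$, multiplication in $\LL$ gives a bilinear form
$$\beta_{i,j,t}\colon e_i\LL_t e_j\times e_j\LL_{n+1-t}e_{\tau i}\longrightarrow e_i\LL_{n+1}e_{\tau i},\qquad (x,y)\mapsto xy,$$
and condition (5) says precisely that $\beta_{i,j,t}$ is non-degenerate whenever at least one of its two source spaces is nonzero.

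I would first show that condition 3 is equivalent to the following: for all such $i,j,t$ the left radical of $\beta_{i,j,t}$ is zero and, in addition, $e_i\LL_t e_j\neq 0$ implies $e_j\LL_{n+1-t}e_{\tau i}\neq 0$. For ``$\Rightarrow$'', given $0\neq u\in e_i\LL_t e_j$, condition 3 yields a path $q$ of length $n+1-t$ from $\tau i$ to $j$ with $uq\neq 0$, so $q$ represents a nonzero element of $e_j\LL_{n+1-t}e_{\tau i}$ with $\beta_{i,j,t}(u,q)\neq 0$. For ``$\Leftarrow$'', left non-degeneracy gives $v\in e_j\LL_{n+1-t}e_{\tau i}$ with $uv\neq 0$; writing $v$ as a linear combination of classes of paths $q_1,\dots,q_m$ of length $n+1-t$ from $\tau i$ to $j$, the relation $uv\neq 0$ forces $uq_k\neq 0$ for some $k$, and $q_k$ is the path demanded by condition 3. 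This passage from a linear combination to a single path is the only delicate point, and it is exactly where one uses that $\LL_{n+1-t}$ is spanned by classes of paths.

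Next I would treat condition 4. Since $i$ there runs over $Q_0\setminus\mathcal I$, the symbol $\tau i$ must be read as $\tau^{-1}i$; writing $i=\tau i'$ with $i'\in Q_0\setminus\mathcal P$ and replacing $t$ throughout by $n+1-t$, condition 4 becomes: for all $i'\in Q_0\setminus\mathcal P$, $j\in Q_0$, $0\le t\le n+1$ and nonzero $v\in e_j\LL_{n+1-t}e_{\tau i'}$, there is a path $p$ of length $t$ from $j$ to $i'$ with $pv\neq 0$. By the argument of the previous paragraph, this is equivalent to: the right radical of each $\beta_{i',j,t}$ is zero, together with $e_j\LL_{n+1-t}e_{\tau i'}\neq 0$ implying $e_{i'}\LL_t e_j\neq 0$.

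Finally I would combine the two. Conditions 3 and 4 together say that every $\beta_{i,j,t}$ is non-degenerate on both sides and that its two source spaces are nonzero simultaneously; hence, under 3 and 4, the hypothesis of (5) forces both source spaces to be nonzero and $\beta_{i,j,t}$ is then non-degenerate, so (5) holds. Conversely, granting (5): for a nonzero $u\in e_i\LL_t e_j$ the hypothesis of (5) is met, so $\beta_{i,j,t}$ is non-degenerate, and its left non-degeneracy (plus the single-path reduction) gives condition 3 while its right non-degeneracy (plus the reindexing) gives condition 4. I expect the main obstacle to be purely organizational: keeping the identification $\tau\leftrightarrow\tau^{-1}$ and the substitution $t\leftrightarrow n+1-t$ straight, and confirming that the only cases where $\beta_{i,j,t}$ could fail to be non-degenerate, namely one source space zero and the other nonzero, are precisely those excluded by the hypothesis of condition (5).
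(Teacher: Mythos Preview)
Your proof is correct. The paper itself states this lemma without proof, presumably regarding it as a routine translation between the combinatorial phrasing of conditions 3 and 4 and the linear-algebraic statement (5); your argument supplies exactly that translation and is the natural one. The three ingredients you isolate---the identification of bound elements of length $t$ from $j$ to $i$ with nonzero elements of $e_i\LL_t e_j$, the reindexing of condition 4 via $i=\tau i'$ (so that the symbol ``$\tau i$'' there is read as $\tau^{-1}i$) together with $t\leftrightarrow n+1-t$, and the passage from a witnessing linear combination $v=\sum c_kq_k$ to a single witnessing path $q_k$---are all that is needed, and you handle them cleanly. Your observation that conditions 3 and 4 carry, beyond zero left and right radicals, the extra information that the two source spaces of $\beta_{i,j,t}$ are nonzero simultaneously is exactly what makes the equivalence with (5) go through in both directions.
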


It is clear that if a bound $Q$ is an $n$-translation quiver with  $n$-translation $\tau: Q_0\setminus \mathcal P \to  Q_0\setminus \mathcal I$, then the opposite quiver $Q^{op}$ is also an $n$-translation quiver with  $n$-translation $\tau^{-1}: Q_0\setminus \mathcal I \to  Q_0\setminus \mathcal P$.

\begin{exa}\label{exaa}
{\em
\begin{enumerate}
\item A $0$-translation quiver is a bound quiver whose connected components are either a subquiver of the quiver of type $A_{ \infty }^{ \infty}$, or the quiver of type $\widetilde{ A_l }$, with linear orientation and  all the paths of length $2$ as relations.

\item A bound quiver with each bound path of length $\le n$ is an $n$-translation quiver with $\mathcal P =\mathcal I = Q_0$, we call such $n$-translation quiver {\em null translation quiver}.

    An $m$-translation quiver for $m < n$ is a null $n$-translation quiver.

\item $1$-translation quiver is the usual translation quiver.
\end{enumerate}
}
\end{exa}

\begin{pro}
Let $Q$ be a finite $n$-translation quiver, then $\mathcal P =\emptyset$ if and only if $\mathcal I=\emptyset$.
\end{pro}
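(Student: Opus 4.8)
The plan is to prove both implications by contraposition, using the combinatorial structure forced by the $n$-translation axioms together with finiteness of $Q$. I would exploit the symmetry between $\mathcal P$ and $\mathcal I$ provided by the observation (stated just before Example~\ref{exaa}) that $Q^{op}$ is an $n$-translation quiver with translation $\tau^{-1}\colon Q_0\setminus\mathcal I\to Q_0\setminus\mathcal P$ and projective/injective vertices swapped; this means it suffices to prove one direction, say ``$\mathcal I=\emptyset\implies\mathcal P=\emptyset$'', and the other follows by passing to the opposite quiver.

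First I would suppose, for contradiction, that $\mathcal I=\emptyset$ but $\mathcal P\neq\emptyset$. Since $\mathcal I=\emptyset$, the $n$-translation $\tau$ is a bijection $Q_0\setminus\mathcal P\to Q_0$; in particular $\tau^{-1}$ is defined on all of $Q_0$, and $\tau^{-1}(\mathcal P)\subseteq Q_0\setminus\mathcal P$ (as $\tau$ maps $Q_0\setminus\mathcal P$ onto $Q_0$, every vertex, including every projective vertex, is $\tau$ of a unique non-projective vertex). Because $Q$ is finite, iterating $\tau^{-1}$ starting at a fixed projective vertex $i_0\in\mathcal P$ must eventually produce a repetition, so there is a finite $\tau$-orbit; more usefully, I would track the ``net creation'' of vertices. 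The key numerical idea: a bijection $Q_0\setminus\mathcal P\to Q_0\setminus\mathcal I=Q_0$ between a proper subset and the whole (finite) set is impossible unless $\mathcal P=\emptyset$, since a finite set cannot be in bijection with a proper subset of itself. That is essentially the whole argument in the finite case: $\tau$ restricts to a bijection from $Q_0\setminus\mathcal P$ onto $Q_0\setminus\mathcal I$, and $|Q_0\setminus\mathcal P|=|Q_0\setminus\mathcal I|$ forces $|\mathcal P|=|\mathcal I|$; combined with one of them being empty, the other is empty too. So the heart of the proof is simply the cardinality identity $|Q_0|-|\mathcal P|=|Q_0|-|\mathcal I|$, valid because $\tau$ is a bijection and $Q_0$ is finite.

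Concretely I would write: since $\tau\colon Q_0\setminus\mathcal P\to Q_0\setminus\mathcal I$ is a bijection and $Q_0$ is finite, we have $|Q_0\setminus\mathcal P|=|Q_0\setminus\mathcal I|$, hence $|\mathcal P|=|\mathcal I|$; therefore $\mathcal P=\emptyset$ if and only if $\mathcal I=\emptyset$. The only subtlety worth a sentence is to confirm that finiteness of the quiver is genuinely used (the statement fails for infinite $Q$, e.g.\ type $A_\infty$), and that $\mathcal P,\mathcal I$ are honest subsets of $Q_0$ so the complements and their cardinalities make sense; neither requires the deeper axioms 1--4, only the existence of the translation bijection from the definition.

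The main (and essentially only) obstacle is recognizing that no path-combinatorics is needed at all: the result is a pure counting statement about the domain and codomain of the bijection $\tau$. I would therefore keep the proof to two or three lines and not invoke Lemma~\ref{nondegbil} or the bilinear-form machinery, which are irrelevant here.
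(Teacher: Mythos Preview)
Your argument is correct: the bijection $\tau\colon Q_0\setminus\mathcal P\to Q_0\setminus\mathcal I$ together with finiteness of $Q_0$ gives $|\mathcal P|=|\mathcal I|$, and the equivalence follows immediately. The paper in fact states this proposition without proof, presumably because exactly this counting observation is regarded as obvious; your write-up supplies the missing one-line justification, though the initial contradiction set-up and the digression about iterating $\tau^{-1}$ are unnecessary and can be dropped in favour of the clean cardinality sentence you eventually give.
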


In this case, the $n$-translation quiver $Q$ is just {\em a stable bound quiver} of Loewy length $n+2$ defined in \cite{g12}.
We call  an $n$-translation quiver a {\em stable $n$-translation quiver} if the $n$-translation and its inverse are defined everywhere, or equivalently $\mathcal P =\emptyset =\mathcal I$.

A graded algebra $\Lambda$ is called an {\em $n$-pretranslation algebra} if its bound quiver is an $n$-translation quiver.
$0$-pretranslation algebras are uniserial algebras with radical squared zero.
$1$-pretranslation algebras are the translation algebras defined in \cite{g02}.

A stable bound quiver of Loewy length $n+2$ is an $n$-translation quiver and a graded self-injective algebra of Loewy length $n+2$ is an $n$-pretranslation algebra.
We have the following proposition (see \cite{g12}).

\begin{pro}\label{self-alg} An $n$-pretranslation algebra $\Lambda$ is self-injective if and only if its quiver is stable.
\end{pro}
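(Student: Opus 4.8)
The plan is to compare the indecomposable projective and injective $\LL$-modules via the socle description provided by Lemma~\ref{nondegbil}. I would first record the consequences of conditions 1 and 2: every maximal bound path of $Q$ has length $n+1$, hence $\LL_{n+2}=0$ and every bound path of length $n+1$ is already maximal; therefore $\LL_{n+1}e_i\neq0$ exactly when $i\in Q_0\setminus\mathcal I$, in which case $\LL_{n+1}e_i=e_{\tau^{-1}i}\LL_{n+1}e_i\cong S(\tau^{-1}i)$, and dually $e_i\LL_{n+1}\neq0$ exactly when $i\in Q_0\setminus\mathcal P$, in which case $e_i\LL_{n+1}\cong S(\tau i)$ as a right module. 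Since $Q$ is locally finite and of bounded path length, each $\LL e_i$ and each $e_i\LL$ is finite dimensional, so the Nakayama duality applies vertexwise; in particular $D_*(e_i\LL)=I(i)$, and $D_*(\LL e_i)$ is the indecomposable injective right module with socle $S(i)$.

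For the ``if'' direction, suppose $\mathcal P=\mathcal I=\emptyset$, so $\tau$ is a bijection of $Q_0$. Applying Lemma~\ref{nondegbil} with $\tau^{-1}i\in Q_0\setminus\mathcal P$ in place of $i$, the multiplication of $\LL$ gives, for every $j$ and every $t$, a non-degenerate pairing
\[ e_{\tau^{-1}i}\LL_t e_j\times e_j\LL_{n+1-t}e_i\longrightarrow e_{\tau^{-1}i}\LL_{n+1}e_i\cong k. \]
Letting $j$ and $t$ vary, this is an $\LL$-compatible perfect pairing between the graded right module $e_{\tau^{-1}i}\LL$ and the graded left module $P(i)=\LL e_i$, so $P(i)\cong D_*(e_{\tau^{-1}i}\LL)=I(\tau^{-1}i)$ as graded $\LL$-modules. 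Hence every indecomposable projective is injective and $\LL$ is self-injective; equivalently, $Q$ is a stable bound quiver of Loewy length $n+2$ and $\LL$ is self-injective by \cite{g12}.

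For the converse, suppose $\LL$ is self-injective, so $P(i)\cong I(\sigma i)$ for a permutation $\sigma$ of $Q_0$. By the first paragraph $P(i)$ has Loewy length $n+2$ iff $i\in Q_0\setminus\mathcal I$, while $I(j)$ has Loewy length $n+2$ iff $j\in Q_0\setminus\mathcal P$; hence $\sigma$ restricts to a bijection $Q_0\setminus\mathcal I\to Q_0\setminus\mathcal P$ and to a bijection $\mathcal I\to\mathcal P$. It remains to exclude $\mathcal I\neq\emptyset$. For $i\in\mathcal I$ the module $P(i)$ is ``short'' (Loewy length $\le n+1$) with simple socle $S(\sigma i)$, $\sigma i\in\mathcal P$; since its top radical layer lies in the socle, this layer is one-dimensional, so every maximal bound path starting at $i$ terminates at $\sigma i$ and, dually (using that $e_{\sigma i}\LL\cong D_*(P(i))$), every maximal bound path ending at $\sigma i$ starts at $i$. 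Pasting these and comparing with condition 1 (all maximal bound paths have length $n+1$), while using connectedness of $Q$ and the fact — proved exactly as in the ``if'' direction applied to vertices outside $\mathcal I$ — that $P(j)$ is already injective for $j\notin\mathcal I$, yields a contradiction. Therefore $\mathcal I=\emptyset$; applying the same argument to $Q^{op}$ (for which $\LL^{op}$ is again self-injective and $\mathcal P(Q^{op})=\mathcal I$) gives $\mathcal P=\emptyset$, so $Q$ is stable.

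The delicate step is this last one: Lemma~\ref{nondegbil} only delivers the non-degenerate pairing at vertices outside $\mathcal P$ (resp.\ $\mathcal I$), so injectivity of the ``short'' projectives $P(i)$ with $i\in\mathcal I$ cannot be read off formally and must instead be propagated through the connected quiver, using that the ``long'' projectives and injectives are already mutually dual. This is precisely the place where the structure theory of stable bound quivers in \cite{g12} is invoked, and I expect it to be the main obstacle to a fully self-contained write-up.
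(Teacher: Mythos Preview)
The paper gives no proof of this proposition; it simply cites \cite{g12}. So there is no ``paper's proof'' to compare with, and what you have written is already more than the paper offers.

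Your ``if'' direction is correct and is the natural argument: Lemma~\ref{nondegbil} provides, for each $i$, a $\Lambda$-compatible perfect pairing between $e_{\tau^{-1}i}\Lambda$ and $\Lambda e_i$, hence $P(i)\cong D_*(e_{\tau^{-1}i}\Lambda)=I(\tau^{-1}i)$.

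Your ``only if'' direction is on the right track but you have made it harder than it is, and the final paragraph never actually produces the contradiction. The clean finish is this: suppose $i\in\mathcal I$. Then the Loewy length of $P(i)=\Lambda e_i$ is $t_i+1$ with $t_i\le n$, and since $P(i)\cong I(\sigma i)=D_*(e_{\sigma i}\Lambda)$, the right module $e_{\sigma i}\Lambda$ has the same Loewy length $t_i+1$, so $e_{\sigma i}\Lambda_{t_i+1}=0$ as well. The simple socle $\Lambda_{t_i}e_i\cong S(\sigma i)$ contains a bound path $p$ of length $t_i$ from $i$ to $\sigma i$. For any arrow $\alpha$ we have $\alpha p\in\Lambda_{t_i+1}e_i=0$, and for any arrow $\beta$ we have $p\beta\in e_{\sigma i}\Lambda_{t_i+1}=0$; hence $p$ is a maximal bound path of length $t_i\le n$, directly contradicting condition~1 of the definition of an $n$-translation quiver (all maximal bound paths have length $n+1$). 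Thus $\mathcal I=\emptyset$, and then $\mathcal P=\emptyset$ follows from your Nakayama-permutation bijection. Neither connectedness nor the injectivity of the ``long'' projectives is needed here, and you do not need to invoke anything further from \cite{g12}.
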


An $n$-pretranslation algebra $\Lambda$ is called an {\em $n$-translation algebra} if there is an $l \in \mathbb N \cup \{\infty \}$ such that $\Lambda$ is $(n+1,l)$-Koszul.
We call $l$ the {\em generalized Coxeter number} of $\Lambda$, as is indicated in the next proposition.

Since  $0$-pretranslation algebras are Koszul, they are $0$-translation algebras.

$1$-pretranslation algebras are algebras with the vanishing radical cube. 
The  self-injective $1$-translation algebras are just self-injective
algebras with the vanishing radical cube.
Their relationship with Koszulity is described in \cite{m1}, see also corollary 4.3 of \cite{bbk}, we rewrite these properties in the language of $1$-translation algebra as follows.

\begin{pro}\label{1-selfc}
Let $\Lambda$ be a connected stable $1$-translation algebra, and let $l$ be its generalized Coxeter number. Then either

\begin{enumerate}
\item $\Lambda$ is representation infinite and $l=\infty$, or

\item $\Lambda$ is representation finite and $l=h-1$, where $h$ is the Coxeter number of its Dynkin diagram.
\end{enumerate}
\end{pro}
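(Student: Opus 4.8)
The plan is to recognise the proposition as the transcription into the language of $1$-translation algebras of the classical structure theory of connected self-injective algebras with vanishing radical cube (Martinez-Villa \cite{m1}; see also Corollary~4.3 of \cite{bbk}), and then to extract the generalized Coxeter number from the left Koszul complex.

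First I would unwind the hypotheses. By Example~\ref{exaa}(3) and Proposition~\ref{self-alg}, a connected stable $1$-translation algebra $\Lambda$ is a connected self-injective algebra whose bound quiver is a stable translation quiver; being a $1$-translation algebra it is $(n+1,l)=(2,l)$-Koszul, and since its indecomposable projectives have Loewy length $n+2=3$ we have $\Lambda_t=0$ for $t>2$ while $\rad^2\Lambda\neq0$, so the first almost-Koszul parameter is forced to equal $p=2$ and $l$ is, by definition, the second one. By Proposition~\ref{char_pqk} the value of $l$ is a purely homological invariant: it is the largest $q$ such that the minimal graded projective resolution of each simple $\Lambda_0e_i$ is linear through homological degree $q$, equivalently the homological degree at which the Koszul complex \eqref{KoszulComplexi} first carries its (possibly zero) semisimple top homology $W(i)$, which is then concentrated in internal degree $p+l$.

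Next I would invoke the dichotomy for the bound quiver. By the results of \cite{m1} (together with the classification of stable translation quivers), the stable translation quiver underlying $\Lambda$ is governed by a connected (valued) graph $\Delta$ — the ``Dynkin diagram of $\Lambda$'' when it is of Dynkin type — and $\Lambda$ is representation finite if and only if $\Delta$ is Dynkin. If $\Delta$ is not Dynkin, the mesh relations (which the $1$-translation axioms pin down through Lemma~\ref{nondegbil}) make \eqref{KoszulComplexi} acyclic, so it is already a minimal projective resolution of $\Lambda_0e_i$ with no extra homology; hence $\Lambda$ is Koszul and $l=\infty$. This settles case (1).

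It remains to treat case (2), with $\Delta$ Dynkin and $\Lambda$ representation finite. Here I would compute the minimal projective resolutions of the simples $\Lambda_0e_i$ step by step from the mesh relations, as in the treatment of the preprojective algebras in \cite{bbk}: successive syzygies are produced by passing through the radical layers of the indecomposable projectives along the $\tau$-hammocks, the resolution remains linear until the socle of some $P^l(i)$ is reached, where the syzygy becomes semisimple and $W(i)$ appears in internal degree $p+l$; the number of linear steps equals the length of a $\tau$-hammock, which by the classical identification of the translation with the Coxeter transformation of $\Delta$ is $h-1$, $h$ the Coxeter number. I expect this last point — showing uniformly across Dynkin types that the linear part of the resolution has length $h-1$ and locating the internal degree of its top homology — to be the main obstacle, and it is exactly here (together with the representation-finiteness equivalence) that the argument leans on \cite{m1} and Corollary~4.3 of \cite{bbk} rather than being self-contained.
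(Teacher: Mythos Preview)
Your proposal is correct and follows essentially the same route as the paper: the paper gives no independent proof but simply introduces the proposition as a rewriting of the results of \cite{m1} and Corollary~4.3 of \cite{bbk} in the language of $1$-translation algebras, which is precisely what you do (with some added elaboration on how the translation goes).
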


\section{Trivial extensions of $n$-translation algebras}
Recall that the trivial extension $\Lambda\ltimes M$ of an algebra $\Lambda$ by a $\Lambda$-bimodule $M$ is the algebra defined on the vector space $\Lambda\oplus M$ with the multiplication defined by $$(a,x)(b,y)=(ab, ay+xb)$$ for $a,b\in \Lambda$ and $x,y\in M$.
The trivial extension $\tilde{\Lambda} = \Lambda\ltimes D_*\Lambda$ of the $\Lambda$ by the bimodule $ D_*\Lambda$ is called {\em the  trivial extension} of $\Lambda$.

Let $Q$ be an $n$-translation quiver with the translation $\tau$.
For each $i \in Q_0\setminus \mathcal P$, fix a path $p_i$ of length $n+1$ from $\tau i $ to $i$.
Let $\Lambda$ be the $n$-pretranslation algebra defined by $Q$.
Then we have the following result.

\begin{lem}\label{exttra}
Write $e_j kQ_1 e_i$ for the space spanned by the arrows from $i$ to $j$.
Then for any $i,j \in Q_0\setminus  \mathcal P$, there is an isomorphism of vector spaces $\tau(i,j): e_j kQ_1 e_i \to e_{\tau j} kQ_1 e_{\tau i} $ such that for any path $p$ of length $n$, and arrow $\alpha \in e_j kQ_1 e_i $, we have in $\Lambda$ $$ \alpha p =p_j \mbox{ and } p \tau(i,j)(\alpha)=p_i.$$
\end{lem}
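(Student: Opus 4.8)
The plan is to fix $i,j\in Q_0\setminus\mathcal P$ and analyze the pairing supplied by Lemma~\ref{nondegbil}. For any arrow $\alpha\in e_j kQ_1 e_i$ (so $\alpha$ is a path of length $1$ from $i$ to $j$), condition (5) of Lemma~\ref{nondegbil}, applied with $t=1$, gives a non-degenerate bilinear form
\[
e_j\LL_1 e_i \times e_i\LL_n e_{\tau j}\longrightarrow e_j\LL_{n+1}e_{\tau j},
\]
and by condition~2 in the definition of $n$-translation quiver the target $e_j\LL_{n+1}e_{\tau j}$ is one-dimensional, spanned by (the image of) $p_j$. Similarly, with $t=n$ we get a non-degenerate pairing $e_j\LL_n e_{\tau i}\times e_{\tau i}\LL_1 e_{\tau j}\to e_j\LL_{n+1}e_{\tau j}$, and with $t=1$ relative to the vertex $i$ we get $e_i\LL_1 e_{\tau i}\cdot$-type pairings; but the cleanest route is to observe that the two pairings $e_j\LL_1 e_i\times e_i\LL_n e_{\tau j}\to e_j\LL_{n+1}e_{\tau j}$ and $e_{\tau j}^{\,*}$-side pairing $e_{\tau j}\LL_n e_{\tau i}^{op}\times\dots$ let us transport $\alpha$ across.

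Concretely, first I would show that for each arrow $\alpha\in e_j kQ_1 e_i$ there is a \emph{unique} element — a path or linear combination of paths — $p$ of length $n$ from $i$ to $\tau j$ with $\alpha p = p_j$ in $\LL$: existence is condition~3 (take $u=\alpha$, $t=1$), and uniqueness follows because if $\alpha p=\alpha p'=p_j$ then $\alpha(p-p')=0$, while non-degeneracy of the form in Lemma~\ref{nondegbil}(5) forces $p-p'=0$ in $\LL_n$ (here one uses that $p-p'$ lives in the second slot of the pairing, which detects it against all of $e_j\LL_1 e_i$, and $\alpha$ already kills it). Dually, using condition~4 with $u=\alpha$, there is a unique length-$n$ element $p'$ from $\tau i$ to $j$ with $p'\alpha = p_i$. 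Then I would define $\tau(i,j)$ as follows: given $\alpha$, form its associated length-$n$ "co-path" $p$ with $\alpha p=p_j$; now $p$ is a length-$n$ element ending at $\tau j$ and starting at $i$, and applying the $t=n$ case of non-degeneracy (the pairing $e_{\tau j}\LL_1 e_i^{\,?}\times e_i\LL_n e_{\tau j}$, i.e. condition~3 with $t=n$ at the vertex $\tau j$, noting $\tau j\in Q_0\setminus\mathcal P$ since $\tau$ is a bijection onto $Q_0\setminus\mathcal I\subseteq\dots$ — one must check $\tau j\notin\mathcal P$, which holds because $\tau j$ lies at the start of the maximal bound path $p_j$, hence is not projective) produces a \emph{unique} arrow $\beta\in e_{\tau j}kQ_1 e_{\tau i}$ with $\beta$ related to $p$; set $\tau(i,j)(\alpha)=\beta$. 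One then checks $p\,\tau(i,j)(\alpha)=p_i$ by comparing both sides against the non-degenerate pairings, or more directly: both $p\beta$ and $p_i$ lie in the one-dimensional space $e_i\LL_{n+1}e_{\tau i}$, and one verifies they have the same nonzero coefficient by multiplying on the left by $\alpha$ and using $\alpha p = p_j$, $\alpha p_i = $ (the length-$(n+2)$ product, which vanishes since Loewy-type length is $n+1$ on these components — or rather $\alpha p_i$ lives in $e_j\LL_{n+2}e_{\tau i}$, which is zero because any bound path through $i$ continuing past $p_i$ would have length $>n+1$, contradicting maximality in condition~1). This forces the normalization to match.

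Linearity and bijectivity of $\tau(i,j)$ then come for free: the construction is built entirely out of the linear duality isomorphisms furnished by the non-degenerate bilinear forms of Lemma~\ref{nondegbil}(5), so $\tau(i,j)$ is a composite of linear isomorphisms $e_j kQ_1 e_i \xrightarrow{\ \sim\ } D_*(e_i\LL_n e_{\tau j}) \xrightarrow{\ \sim\ } e_{\tau j}kQ_1 e_{\tau i}$, hence itself a linear isomorphism. I expect the main obstacle to be \textbf{bookkeeping the vertex-membership conditions} — verifying at each step that the vertices $\tau i$, $\tau j$ actually lie in the domains $Q_0\setminus\mathcal P$ or $Q_0\setminus\mathcal I$ required to invoke Lemma~\ref{nondegbil}(5), and checking that the relevant $\LL_t$-spaces are the ones claimed — together with pinning down the normalization constants so that the \emph{same} isomorphism $\tau(i,j)$ simultaneously witnesses both $\alpha p = p_j$ and $p\,\tau(i,j)(\alpha)=p_i$ for a consistent choice of intermediate $p$. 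The algebra is routine once the correct pairings are identified; the care is in the indices.
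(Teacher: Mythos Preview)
Your final formulation---realizing $\tau(i,j)$ as the composite
\[
e_j kQ_1 e_i \;\xrightarrow{\ \sim\ }\; D_*(e_i\LL_n e_{\tau j}) \;\xrightarrow{\ \sim\ }\; e_{\tau j}kQ_1 e_{\tau i}
\]
of the two duality isomorphisms coming from the non-degenerate pairings of Lemma~\ref{nondegbil}(5)---is correct and is exactly what the paper does, only phrased there in terms of dual bases: one fixes the arrows $\alpha_1,\dots,\alpha_s$ from $i$ to $j$, takes their dual basis $q(1),\dots,q(s)$ in $e_i\LL_n e_{\tau j}$ under the pairing into $kp_j$, then takes the dual basis $z_1,\dots,z_s$ of the $q(t)$ in $e_{\tau j}\LL_1 e_{\tau i}$ under the pairing into $kp_i$, and sets $\tau(i,j)\alpha_t=z_t$.

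Your intermediate ``concrete'' version, however, has a genuine gap. You claim that for a single arrow $\alpha$ the length-$n$ element $p$ with $\alpha p = p_j$ is unique; but non-degeneracy only forces $p-p'=0$ once $\alpha'(p-p')=0$ for \emph{every} $\alpha'\in e_j\LL_1 e_i$, not just for the one $\alpha$ you started with. When $\dim e_jkQ_1e_i>1$ there is a whole affine subspace of such $p$, so the assignment $\alpha\mapsto p\mapsto\beta$ is not well-defined as written. You must pass through a full basis and its dual basis (as the paper does), or equivalently go straight to your composite-of-dualities description; the two are the same construction.

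On the vertex bookkeeping: you do not need $\tau j\notin\mathcal P$, and your attempt to invoke a pairing ``at the vertex $\tau j$'' is the wrong instance of the lemma. The second pairing you actually need is
\[
e_i\LL_n e_{\tau j}\times e_{\tau j}\LL_1 e_{\tau i}\longrightarrow e_i\LL_{n+1}e_{\tau i}=kp_i,
\]
which is Lemma~\ref{nondegbil}(5) with terminal vertex $i$, middle vertex $\tau j$, and $t=n$; the only hypothesis required is $i\notin\mathcal P$, which is given.
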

\begin{proof} Identify $kQ_0+kQ_1$ with its image in $\Lambda$.
Let $\alpha_1, \cdots, \alpha_s$ be the arrows from $i$ to $j$, then by Lemma~\ref{nondegbil}, there are linear combinations $q(1), \cdots, q(s)$ of the paths of length $n$ from $\tau j$ to $i$ such that for $1\le t \le s$, $\alpha_t q(t) = p_j$ and $\alpha_t q(t') =0$ for $t\neq t'$ in $\Lambda$.
Similarly, since $q(1), \cdots, q(s)$ are linear combinations of the paths of length $n$ from $\tau j$ to $i$ and they are linearly independent, there are uniquely determined linear combinations $z_1,\ldots, z_s$ of arrows from $\tau i$ to $\tau j$, such that  for $1\le t \le s$, $q(t)z_t = p_t$ and $q(t') z_t =0$ for $t\neq t'$ in $\Lambda$.
Define $\tau(i,j) \alpha_t = z_t$ for $t=1, \ldots,s$, it is easy to see that $\tau(i,j)$ is the desired map.
\end{proof}
Clearly, $\tau(i,j)$ is in fact an isomorphism of the vector spaces, for any pair $i,j\in Q_0\setminus  \mathcal P$.
Set $\tau(i,j)=0$ for $(i,j)\not\in (Q_0\setminus  \mathcal P)\times (Q_0\setminus \mathcal P)$, and let $$\tau=\bigoplus_{i,j\in Q_0}  \tau(i,j) : \bigoplus_{i,j\in Q_0}  e_j kQ_1 e_i \to \bigoplus_{i,j\in Q_0}  e_j kQ_1 e_i. $$
This extends $\tau$ to $kQ_1$.

Let $p$ be a bound path from $i$ to $j$ and suppose that any bound path from $i$ to $j$ of the same length as $p$ is linearly dependent on $p$, $p$ is called {\em left stark of degree $t$} with respect to $i'$ if $pw$ is a bound element for any bound element from $i'$ to $i$ of length $t< n+1-l(p)$; $p$ is called {\em right stark of degree $t$} with respect to $j'$ if $wp$ is a bound element for any bound element from
 $j$ to $j'$
 of length $t< n+1-l(p)$.
A bound path is call {\em right shiftable} if it is linearly dependent on a set of bound paths passing through no projective vertex, is call {\em left shiftable} if it is linearly dependent on a set of bound paths passing through no injective vertex.
A bound path $p$ is called {\em semi-shiftable} if it is linear dependent to paths of the form $p'p''$ with $p''$ passing through no injective vertex and $p'$ passing through no projective vertex.
A bound path is call {\em shiftable} if it is left shiftable or right shiftable or semi-shiftable.

An $n$-translation quiver $Q$ is called {\em  admissible} if it satisfies the following conditions:
\begin{enumerate}
\item[(i)] For each bound path $p$, there are paths $q'$ and $q''$ such that $q'pq''$ is a bound path of length $n+1$.
\item[(ii)] Any bound path $p$ from a non-injective vertex $i$ to a non-projective vertex $j$ is
    linearly dependent to shiftable paths.
\item[(iii)] Let $i$ be a non-projective vertex.
     Let $p$ be a bound path
      ending at $i$
      and let $q$ be a bound path starting at $\tau i$ with $l(p)+l(q) \le n$.
     If $p$ passes a projective vertex and $q$ passes an injective vertex, then $p$ is either left stark with respect to $t(q)$, or $q$ is right stark with respect to $s(p)$, of length $n+1- (l(p)+l(q))$.
\end{enumerate}

Clearly, a stable $n$-translation quiver is admissible.

The following proposition is a generalization of Theorem 2.4 of \cite{g14}.

\begin{pro}\label{exttraqr}
Let $\Lambda$ be an $n$-pretranslation algebra given by the  admissible $n$-translation quiver $Q$ with translation $\tau$.
Let $\tilde{\Lambda}$ be its trivial extension and $\tilde{Q} = (\tilde{Q}_0, \tilde{Q}_1, \tilde{\rho})$ be its bound quiver.
Then
\begin{enumerate}
\item $\tilde{Q}_0 =Q_0$ and $\tilde{Q}_1 = Q_1 \cup \{\beta_i : i \to \tau i|i \in Q_0\setminus \mathcal P\}$

\item $\tilde{\rho}= \rho \cup \{\beta_{\tau i}\beta_i | i, \tau i \in Q_0\setminus \mathcal P\} \cup \{\tau(\alpha)\beta_i-\beta_j\alpha| \alpha:i\to j \in Q_1, i,j \in Q_0\setminus \mathcal P\}$

\item $\tilde{\Lambda}$ is a stable $(n+1)$-pretranslation algebra with trivial translation.
\end{enumerate}
\end{pro}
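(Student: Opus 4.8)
The plan is to work directly with the algebra $\tilde{\Lambda}=\Lambda\ltimes D_*\Lambda$, using that $\Lambda$ has Loewy length $n+2$ (condition~1 forces $\Lambda_t=0$ for $t>n+1$, and $e_i\Lambda_{n+1}e_{\tau i}=kp_i$ is one-dimensional for $i\in Q_0\setminus\mathcal P$), that $D_*\Lambda$ is a square-zero ideal of $\tilde{\Lambda}$, and that admissibility condition~(i) makes every bound path of $Q$ a sub-path of some $p_i$. The first step is to fix the grading: set $\tilde{\Lambda}_m=\Lambda_m\oplus D_*\Lambda_{n+2-m}$. Since the new arrows will be the dual basis vectors $\beta_i:=p_i^{*}\in D_*(e_i\Lambda_{n+1}e_{\tau i})$, this puts each $\beta_i$ in degree $1$, is compatible with $(a,x)(b,y)=(ab,ay+xb)$, and presents $\tilde{\Lambda}$ as a connected graded algebra with $\tilde{\Lambda}_0=\Lambda_0$ and with $\tilde{\Lambda}_{n+2}=D_*\Lambda_0\ne 0$ while $\tilde{\Lambda}_t=0$ for $t\ge n+3$, so that $\tilde{\Lambda}$ has Loewy length $n+3$.

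For (1): $\tilde{\Lambda}_0=\Lambda_0$ gives $\tilde{Q}_0=Q_0$, and the arrows form a basis of $\rad\tilde{\Lambda}/(\rad\tilde{\Lambda})^2$. As there is no $D_*\Lambda\cdot D_*\Lambda$ term, this quotient splits as $\Lambda_1\oplus\bigl(D_*\Lambda/((\rad\Lambda)D_*\Lambda+(D_*\Lambda)(\rad\Lambda))\bigr)$; the first summand yields $Q_1$ and the second is the $\Lambda$-bimodule top of $D_*\Lambda$, hence the $D_*$-dual of the $\Lambda$-bimodule socle of $\Lambda$. Using conditions~1 and~2, the non-degeneracy of Lemma~\ref{nondegbil}, and (to rule out short elements, in the presence of projective or injective vertices) admissibility conditions~(i) and~(iii), I will show this bimodule socle is exactly $\bigoplus_{i\in Q_0\setminus\mathcal P}e_i\Lambda_{n+1}e_{\tau i}$ with one-dimensional summands; dualizing produces exactly one new arrow $\beta_i\in e_{\tau i}(D_*\Lambda)e_i$, i.e. $\beta_i\colon i\to\tau i$, for each $i\in Q_0\setminus\mathcal P$, which is (1).

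For (2): one first checks the listed elements vanish in $\tilde{\Lambda}$ — the relations in $\rho$ because $\Lambda$ embeds as a graded subalgebra, each $\beta_{\tau i}\beta_i$ because it lies in $D_*\Lambda\cdot D_*\Lambda=0$, and $\tau(\alpha)\beta_i=\beta_j\alpha$ by evaluating both sides, regarded as functionals in $D_*(e_i\Lambda_n e_{\tau j})$, on $e_i\Lambda_n e_{\tau j}$ and using $\alpha p=p_j$ and $p\,\tau(i,j)(\alpha)=p_i$ from Lemma~\ref{exttra}. For the converse I will show the canonical epimorphism $k\tilde{Q}/(\tilde{\rho})\to\tilde{\Lambda}$ is an isomorphism by a graded dimension comparison, using $\dim_k\tilde{\Lambda}_m=\dim_k\Lambda_m+\dim_k\Lambda_{n+2-m}$. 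In $k\tilde{Q}/(\tilde{\rho})$ the images of the paths with no new arrow span a quotient of $\Lambda$, hence at most $\dim_k\Lambda_m$ in degree $m$; the images of the paths using at least one $\beta_i$ span a $\Lambda$-bimodule generated by the $\beta_i$ which surjects onto $(D_*\Lambda)_m$, and the relations $\beta_{\tau i}\beta_i$ and $\tau(\alpha)\beta_i-\beta_j\alpha$, which allow each $\beta_i$ to be slid toward an end of the path, are used to bound this span also by $\dim_k(D_*\Lambda)_m=\dim_k\Lambda_{n+2-m}$. Since these two spans map into the complementary summands $\Lambda$ and $D_*\Lambda$ of $\tilde{\Lambda}$, the bounds add to $\dim_k\tilde{\Lambda}_m$, which forces the isomorphism and gives (2). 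I expect this last bound to be the crux of the proof: the delicate point is the interaction of a new arrow $\beta_i$ with a path running through projective or injective vertices, exactly where the commutation relations fail to apply, and it is here that admissibility conditions~(ii) and~(iii) — through the shiftable and stark path notions — are needed to reduce every such path to a controlled form.

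For (3): $\tilde{\Lambda}=\Lambda\ltimes D_*\Lambda$ is self-injective, with $\soc(\tilde{\Lambda}e_i)=S(i)$ concentrated in top degree $n+2$ and $e_i\tilde{\Lambda}_{n+2}e_i=D_*(e_i\Lambda_0 e_i)$ one-dimensional; hence for all $i,j$ and $0\le t\le n+2$ the multiplication induces a perfect pairing $e_i\tilde{\Lambda}_t e_j\times e_j\tilde{\Lambda}_{n+2-t}e_i\to e_i\tilde{\Lambda}_{n+2}e_i\cong k$. Together with $\tilde{\Lambda}_t=0$ for $t>n+2$, perfectness of these pairings yields the four defining conditions of an $(n+1)$-translation quiver for $\tilde{Q}$ with translation $\mathrm{id}$ and $\mathcal P=\mathcal I=\emptyset$: condition~1 holds because $e_i\tilde{\Lambda}_{n+2}e_i\cong k$ gives a bound path of length $n+2$ from $i$ to $i$ while perfectness shows every nonzero bound path of length $<n+2$ extends, so every maximal bound path has length $n+2$ and, lying in $\tilde{\Lambda}_{n+2}=\bigoplus_i e_i\tilde{\Lambda}_{n+2}e_i$, joins a vertex to itself; condition~2 holds because $\dim_k e_i\tilde{\Lambda}_{n+2}e_i=1$; and conditions~3 and~4 follow from non-degeneracy of the pairings, passing from a spanning combination of paths to a single path in the usual way. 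Thus $\tilde{\Lambda}$ is a stable $(n+1)$-pretranslation algebra with trivial translation — stability also being a consequence of Proposition~\ref{self-alg} once $\tilde{Q}$ is known to be an $(n+1)$-translation quiver.
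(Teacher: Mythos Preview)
Your approach is essentially the same as the paper's: identify the new arrows as the $p_i^*$, verify the listed relations hold in $\tilde{\Lambda}$, and then prove sufficiency by bounding $\dim k\tilde{Q}/(\tilde{\rho})$ through a spanning argument that slides the $\beta$'s using the commutation relations and invokes admissibility (ii)--(iii) precisely where projective or injective vertices obstruct the slide; the paper carries this out by building an explicit spanning set $B\cup C$ with $|C|=|B|=\dim\Lambda$, while you phrase the same computation as a graded dimension comparison, and your treatment of (3) via the self-injective perfect pairing is the paper's ``$\tilde{\Lambda}$ is symmetric'' remark spelled out.

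The one place your packaging differs is step (1): you compute the new arrows as the bimodule top of $D_*\Lambda$, hence dual to the bimodule socle of $\Lambda$, whereas the paper bypasses the socle and directly exhibits a basis $\bar{C}$ of $D_*\Lambda$ consisting of elements $q'p_{i'}^*q''$. These are equivalent statements, but be aware that proving the bimodule socle lies entirely in degree $n{+}1$ is exactly the same work as the paper's construction of $\bar{C}$: conditions 3--4 of the $n$-translation quiver handle bound elements ending at non-projective or starting at non-injective vertices, while the remaining case (source injective, target projective) needs the stark-path dichotomy of admissibility (iii) just as in the paper --- so your parenthetical ``(to rule out short elements \ldots)\ admissibility conditions (i) and (iii)'' is carrying the same load as the paper's explicit dual-basis construction in that case, and you should expect to reproduce that argument rather than something shorter.
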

\begin{proof}
Note that $(D_*\Lambda)^2 = 0$ in $\tilde{\Lambda}$, thus it is a nilpotent ideal of $\Lambda$ and is contained in the radical of $\tilde{\Lambda}$.
Take a basis $B$ of $\Lambda$ consisting of paths and containing the chosen path $p_i$ of length $n+1$ from $\tau i $ to $i$, for all $i \in Q_0 \setminus \mathcal P$.
Let $B^*$ be the dual basis of $D_* \Lambda$, and let $p^*_i$ be the basic element dual to $p_i$.

For any $i, j \in Q_0$ and $0 \le t \le n+1$, let $q_1, \ldots, q_r$ be the bound paths of length $t$ from $j$ to $i$ in $B$, then there are linear combinations $q'_1, \ldots, q'_r$ of the paths from $\tau i'$ to $j$  and $q''_1, \ldots, q''_r$ of the paths from $i$ to $i'$ such that $q''_s q_s q'_s= p_{i'}$ for $1\le s \le r$.
If $i \not \in P$, we may take $i'=i$ and $q'_1, \ldots, q'_r$ as dual basis of $q_1, \ldots, q_r$ and we have that $q_s^* =q'_s p_i^* $ for $1\le s \le r$.
If $i\in P$ and $j \not \in I$, we may take $i'=\tau^{-1}j $ and $q''_1, \ldots, q''_r$ as dual basis of $q_1, \ldots, q_r$ and we have that $q_s^* = p_i^* q''_s$ for $1\le s \le r$.
Otherwise $i\in P$ and $j \in I$, take $q_1''$ minimal such that $t(q'')=\tau^{-1}i \not \in P$.
By (iii) of the admissibility either we take $q'_1= \ldots =q'_r$ a left stark elements and  $q''_1, \ldots, q''_r$ dual basis of $q_1q'_1, \ldots, q_rq'_r$, or  we take $q''_1= \ldots =q''_r$ a right stark elements and  $q'_1, \ldots, q'_r$ dual basis of $q''_1q_1, \ldots, q''_rq_r$, then we have $q_s^* = q'_s p_i^* q''_s$ for $1\le s \le r$.
For each $p\in B$, fix $p'$ or $p''$ or $p'$ and $p''$ as defined above.
Then $D_*\LL$ is spanned by
$$\arr{cll}{\bar{C}&=&\{p'p^*_{t(p)}|p\in B, t(p) \in Q_0 \setminus \mathcal P\} \\ && \cup \{ p^*_{\tau^{-1} s(p)} p''|p\in B, t(p) = s(p'') \in \mathcal P, s(p) \in Q_0 \setminus \mathcal I\}\\ && \cup \{p' p^*_{t(p'')} p''|p\in B, p''\mbox{ minimal, not left shiftable}, t(p) = s(p'') \in \mathcal P,\\ && \qquad p'\mbox{ not right shiftable},s(p) = t(p')  \in \mathcal I \}}$$
as vector space.
In fact $\bar{C}$ is a basis of $D_*\LL$, by our construction.
Especially, $D_* \Lambda$ is generated by $\{p_i^*|i\in Q_0 \setminus \mathcal P \}$ as a $\Lambda$-$\Lambda$-bimodule.

So $\tilde{\Lambda}$ is generated by $\Lambda_1$ and $\{p_i^*|i\in Q_0 \setminus \mathcal P \}$ over $\Lambda_0 =\tilde{\Lambda}_0$.
Clearly, we have that $e_{j'}p_i^*e_{i'} \neq 0$ if and only if $ i' = i $ and $j' = \tau i$, and $p^*_i$ is linearly independent of the arrows in $Q$, so it can be regarded as an arrows from $i $ to $\tau i$.
Write $\beta_i$ for the arrow represented by $p^*_i$ and regard it as an element of degree $1$.
This shows that the quiver of  $\tilde{\Lambda}$ has the same set of vertices as $Q$ and has the arrow set $\tilde{Q}_1= Q_1\cup\{\beta_i: i \to \tau i| i\in Q_0\}$.

We have $(D_* \Lambda)^2 = 0$, so $p^*_{\tau i} p^*_i  =0$, and by Lemma \ref{exttra}, we have that $\tau(\alpha)p^*_i-p^*_j\alpha$ for any arrow $\alpha:i\to j$ with $i, j\in Q_0\setminus \mathcal P$.
Thus we have an epimorphism $\psi$ from $k\tilde{Q}/(\tilde{\rho})$ to $\tilde{\Lambda}$ sending $kQ/(\rho)$ to $\Lambda$ and $\beta_i$ to $p_i^*$.

Now let  $$\arr{cll}{C&=&\{p'\beta_{t(p)}|p\in B, t(p) \in Q_0 \setminus \mathcal P\} \\ && \cup \{ \beta_{\tau^{-1} s(p)} p''|p\in B, t(p) = s(p'') \in \mathcal P, s(p) =\tau t(p'')\in Q_0 \setminus \mathcal I\}\\ && \cup \{p' \beta_{t(p'')} p''|p\in B, p''\mbox{ minimal, not left shiftable}, t(p) = s(p'') \in \mathcal P,\\ && \qquad p'\mbox{ not right shiftable},s(p) = t(p') \in \mathcal I
\}.}$$
Clearly, $|C|=|B|$.

Consider an element of the form $\beta_j q \beta_i $ in $k \tQ /(\tilde{\rho})$, then $q$ is a bound path from  $\tau i$ to $j$ with $\tau i$ is not injective and $i,j$ not projective.
By (ii) of the admissibility, $q$ is
linearly dependent on shiftable paths.
We may assume that $p$ is
shiftable, that is,  left shiftable, right shiftable or semi-shiftable.
In the third case, $\beta_j q \beta_i $ is linearly dependent to the paths of the form $\beta_j q'q'' \beta_i$, such that all the vertices of $q'$ are non-projective and all the vertices of $q''$ are non-injective.
Thus  $\beta_j q'q'' \beta_i$ is written as a linear combination of paths of the form $w'\beta_{j'}  \beta_{i'}w'' $ in $k\tilde{Q} / (\tilde{\rho})$,  by applying the relations of the form $\tau (\alpha) \beta_{\tau^{-1}s(\alpha)} - \beta_{t(\alpha)} \alpha $.
Similarly, $\beta_j q \beta_i $  is written as a linear combination of paths of the form $w'\beta_{j'}  \beta_{i'}$ in $k\tilde{Q} / (\tilde{\rho})$, in the first case and is written as a linear combination of paths of the form $\beta_{j'}  \beta_{i'}w''$ in the second.
This shows that $\beta_j q \beta_i = 0$ for any path $q$ in $k\tilde{Q} / (\tilde{\rho})$, since $\beta_{j'}  \beta_{i'} \in (\tilde{\rho})$ for any $i',j'$.

Thus each element in $k\tilde{Q}/(\tilde{\rho})$ can be written as a linear combination of the form $q'\beta_i q$ with $q'$ a bound path starting from $\tau i$ and $q$ a bound path ending at $i$.
If there $q$ is right shiftable, then $q'\beta_i q$ is written as a linear combination of the  paths of the form $p'\beta_{j''}$, due to the relations of the form $ \tau(\alpha) \beta_{i'} - \beta_{j'} \alpha$  for each arrow $\alpha$ with non-projective $i',j'$.
Similarly if $q' $ is left shiftable, then $q'\beta_i q$ is written as a linear combination of the  paths of the form $\beta_{j''}p''$.
Otherwise $q'$ passes a projective vertex and $q$ pass an injective vertex, by apply the relations of the form $ \tau(\alpha) \beta_{i'} - \beta_{j'} \alpha$ for non-projective $j'$, we may write $q'\beta_i q$ as a linear combination of $p'\beta_{i'}p''$ with $p''$ minimal.


This shows that $k\tilde{Q}/(\tilde{\rho})$ is spanned by $B$ and $C $ and thus is isomorphic to $\tilde{\Lambda}$.

Set $\{p_i^*|i\in Q_0 \setminus \mathcal P \}$ as elements of degree $1$, then $\tilde{\Lambda}_1 = \Lambda_1 + \sum_{i\in Q_0}k p_i^*$, and we have that $\tilde{\Lambda} =\tilde{\Lambda}_0 + \tilde{\Lambda}_1 + \cdots + \tilde{\Lambda}_{n+2}$, with $\tilde{\Lambda}_t = \tilde{ \Lambda }_1^t $ for $1 \le t \le n+2$.

$\tilde{\Lambda} \simeq k\tilde{Q}/(\tilde{\rho})$ is a symmetric algebra.
It is clearly that each maximal path in $k\tilde{Q}/(\tilde{\rho})$ contains an arrow $\beta_j$ for some $j\in Q_0 \setminus \mathcal P $, and thus has length $n+2$ and has the same vertex as its starting and ending vertex.
Thus $\tilde{\Lambda} \simeq k\tilde{Q}/(\tilde{\rho})$ is a stable $(n+1)$-pretranslation algebra with trivial translation.
\end{proof}

If $\Lambda$ is a Koszul self-injective algebra, then all its projective modules have the same Loewy length, say $n+2$, in this case  $\Lambda$ is a stable $n$-translation algebra with an admissible $n$-translation quiver.
So we have the following theorem in this case.

\begin{thm}\label{triself}
Let $\Lambda$ be a Koszul self-injective algebra. If $\Lambda$ is an $n$-translation algebra, then $\tilde{\Lambda}$ is an $(n+1)$-translation algebra.
\end{thm}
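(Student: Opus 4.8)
The plan is to combine Proposition~\ref{exttraqr} with a Koszulity argument. Recall that an $(n+1)$-translation algebra is an $(n+1)$-pretranslation algebra which is $(n+2,l)$-Koszul for some $l\in\mathbb N\cup\{\infty\}$; the relevant value here will be $l=\infty$. Since $\Lambda$ is self-injective, Proposition~\ref{self-alg} shows that its $n$-translation quiver $Q$ is stable, hence admissible, so Proposition~\ref{exttraqr} applies and exhibits $\tilde{\Lambda}$ as a stable $(n+1)$-pretranslation algebra with trivial translation, whose bound quiver $\tilde Q$ has vertex set $Q_0$, arrows $Q_1\cup\{\beta_i:i\to\tau i\mid i\in Q_0\}$, and the relations $\tilde\rho$ listed there; from the proof of that proposition, $\tilde{\Lambda}$ has Loewy length $n+3$. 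Because $\Lambda$ is Koszul it is quadratic (Proposition~\ref{bbk1}), so $\rho$ can be chosen inside $\Lambda_1\otimes\Lambda_1$, and then every generator of $\tilde\rho$ is homogeneous of degree $2$; hence $\tilde{\Lambda}$ is quadratic as well. Since $\tilde{\Lambda}$ has Loewy length $n+3$, it is $(n+2,\infty)$-Koszul precisely when it is Koszul, so it suffices to prove that $\tilde{\Lambda}$ is a Koszul algebra.

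To prove this I would analyse the Koszul complex \eqref{KoszulComplexi} of $\tilde{\Lambda}$, whose differential is right multiplication by $u=\sum_{\alpha\in Q_1}\alpha\otimes\alpha^{*}+\sum_{i\in Q_0}\beta_i\otimes\beta_i^{*}$. Writing $u=u_{\Lambda}+u_{\beta}$ and using the relations $\beta_{\tau i}\beta_i=0$ (so that at most one arrow $\beta_j$ occurs in any path of $\tilde{\Lambda}$) exhibits this complex as the total complex of a bicomplex: one direction is built from the $\Lambda$-bimodule $D_{*}\Lambda$ together with the $\beta_i\otimes\beta_i^{*}$, the other from degree-shifted copies of the Koszul complex of $\Lambda$. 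Now $\Lambda$ is Koszul, so the $u_{\Lambda}$-direction is exact in positive homological degree; and $\Lambda$ is self-injective, so $D_{*}\Lambda$ is projective both as a left and as a right $\Lambda$-module, which makes the $u_{\beta}$-direction behave like the co-Koszul complex of a projective module. A staircase/spectral-sequence argument, of exactly the kind used for the bicomplex displayed just before Proposition~\ref{pqalgdual}, then shows the total complex has homology only $\tilde{\Lambda}_{0}$ in degree $0$; that is, the Koszul complex of $\tilde{\Lambda}$ resolves $\tilde{\Lambda}_{0}$, so $\tilde{\Lambda}$ is Koszul. An equivalent route is to identify the quadratic dual: $\tilde{\Lambda}^{!}$ has quiver $Q^{op}$ enlarged by the dual arrows $\beta_i^{*}$, and a computation of $\tilde\rho^{\perp}$ shows that, besides the relations $\rho^{*}$ presenting $\Lambda^{!}$, the only relations are $\tau$-twisted commutation relations making $\theta=\sum_i\beta_i^{*}$ a normal degree-one element (the relations $\beta_{\tau i}\beta_i$ in $\tilde\rho$ are what keep $\theta$ from being square-zero in $\tilde{\Lambda}^{!}$); thus $\tilde{\Lambda}^{!}$ is a twisted polynomial extension $\Lambda^{!}[\theta;\sigma]$ of the Koszul algebra $\Lambda^{!}$, for a graded automorphism $\sigma$ determined by $\tau$. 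Adjoining one normal degree-one variable preserves Koszulity, and the quadratic dual of a Koszul algebra is Koszul (cf.\ \cite{bgs}), so $\tilde{\Lambda}=(\tilde{\Lambda}^{!})^{!}$ is Koszul again.

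I expect the genuine work to be the exactness statement in the middle of the previous paragraph. In the bicomplex form this amounts to checking that splicing the Koszul resolution of $\Lambda_{0}$ with the co-Koszul complex attached to the one-sided projective bimodule $D_{*}\Lambda$ produces no higher homology — this is precisely where Koszulity of $\Lambda$ and self-injectivity of $\Lambda$ are both genuinely used, and it is the heart of the ``stable Koszul'' case. In the quadratic-dual form the work is instead to compute $\tilde\rho^{\perp}$ exactly and to confirm that $\tilde{\Lambda}^{!}$ is genuinely the twisted polynomial ring $\Lambda^{!}[\theta;\sigma]$ (rather than a proper quotient of it), after which the Koszulity of the extension is routine. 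Everything else — the pretranslation structure, the Loewy length, quadraticity, and the bookkeeping reducing ``$(n+1)$-translation algebra'' to ``$\tilde{\Lambda}$ Koszul'' — is immediate from Proposition~\ref{exttraqr} and the definitions.
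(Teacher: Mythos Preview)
Your reduction is exactly the paper's: use Proposition~\ref{self-alg} to get a stable (hence admissible) quiver, apply Proposition~\ref{exttraqr} to see that $\tilde{\Lambda}$ is a stable $(n+1)$-pretranslation algebra of Loewy length $n+3$, and then observe that the remaining content of ``$(n+1)$-translation algebra'' is precisely the Koszulity of $\tilde{\Lambda}$.

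Where you diverge is in how you obtain that Koszulity. The paper does not argue it at all: it simply invokes Lemma~3.2 of \cite{g14}, which states that the trivial extension of a Koszul self-injective algebra of Loewy length $n+2$ is again Koszul self-injective, of Loewy length $n+3$. Your two sketches (the bicomplex on the Koszul complex, and the identification of $\tilde{\Lambda}^{!}$ as a twisted degree-one extension of $\Lambda^{!}$) are both reasonable strategies for \emph{proving} that cited lemma, and they use exactly the two hypotheses in the expected places. The twisted-extension picture needs a little care in the many-idempotent setting: $\theta=\sum_i\beta_i^{*}$ is a formal sum rather than an honest central element, so the statement is really that the $\Lambda_0$-bimodule spanned by the $\beta_i^{*}$ is normalizing and that $\tilde{\Lambda}^{!}$ is the associated tensor algebra modulo the twisted commutation relations; once formulated that way the ``adjoining a normal degree-one variable preserves Koszulity'' argument goes through. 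So your proposal is correct but strictly more ambitious than the paper's proof, which outsources the only nontrivial step.
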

\begin{proof}
Note that the Loewy length of a graded self-injective algebra is one plus the length of its paths of the maximal length.
By Lemma 3.2 of \cite{g14}, the trivial extension of a Koszul self-injective algebra of Loewy length $n+2$ is Koszul self-injective of Loewy length $n+3$.
The Theorem follows directly from the definition.
\end{proof}

If the trivial extension of an $n$-translation algebra $\Lambda$ is an $(n+1)$-translation algebra with an admissible quiver, then $\Lambda$ is called {\em extendable}.

\section{Smash products with cyclic groups and $(n+1)$-translation quiver $\mathbb Z_v|_n Q$}
Let $\mathbb Z_v =\mathbb Z/v \mathbb Z $ be a cyclic group of $v$ elements for a nonnegative integer $v$, and we write $\mathbb Z_0 = \mathbb Z$, conventionally.
Let $\Lambda = \Lambda_0 + \Lambda_1 + \cdots$ be a graded algebra.
We assume that $\Lambda $ has a $\mathbb Z_v$ grading which is determined by a homogeneous subspace $U\subset \Lambda_1$ with a complement $U'$ in $ \Lambda_1$.
Let $W_0 $ be the subalgebra generated by $\Lambda_0$ and $U'$.
Define $W_1=W_0U+UW_0$, and define $W_r$ inductively by
$$W_r = \sum_{s=0}^{r-1}W_{s} U  W_{r-1-s}$$ for $r = 2, 3, \cdots$.
Let $\Lambda'_r = \sum_{t\in r+v \mathbb Z} W_t$.
Then $\Lambda = \Lambda'_0 + \Lambda'_1 +\cdots $ is a $\mathbb Z_v$-graded algebra, we call this a $\mathbb Z_v$-grading determined by the subspace $U$ of $\Lambda_1$.

Recall that the smash product $\Lambda \# k \mathbb Z_v^*$ of $\Lambda$ with $\mathbb Z_v$, defined on the $\mathbb Z_v$-grading of $\Lambda$ determined by $U$, is the free $\Lambda$ module $\Lambda \otimes_k k \mathbb Z_v^*$ with basis  $\mathbb Z_v^* = \{ \delta_g | g \in \mathbb Z_v\}$, and the multiplication is defined by
$$(x\otimes  \delta_h)(y\otimes  \delta_g) = xy'_{h-g}\otimes \delta_g $$
for $x,y \in \Lambda$, $y=\sum_{h\in \mathbb Z_v} y'_h$ with $y'_h \in \Lambda'_h$.
Write $a \delta_g= a\otimes \delta_g$ for the element of $\Lambda \# k\mathbb Z_v^*$.
For any $\Lambda$-module $M$ which has a $\mathbb Z_v$-gradation $M = \sum_{h\in \mathbb Z_v} M'_{h}$, the $\Lambda\# k\mathbb Z_v^*$-module  $M\# k\mathbb Z_v^*$ is defined as $M \otimes_k k\mathbb Z_v^*$ with $$ (a\otimes \delta_g)(m\otimes \delta_h) = am'_{g-h}\otimes \delta_h$$ for all $a\in  \Lambda, m =\sum_{h \in \mathbb Z_v} m'_h \in  M'_h$ and $g, h\in \mathbb Z_v$.
Similarly, we write its elements simply as $m\delta_h$ for $m\in M$ and $h\in \mathbb Z_v$

The original gradation of $\Lambda$ induces a grading on $\Lambda\# k \mathbb Z_v $: $$\Lambda\# k \mathbb Z_v = \Lambda_0 \otimes_k k \mathbb Z_v + \Lambda_1 \otimes_k k \mathbb Z_v \cdots ,$$
with $\{e_i \delta_h | i\in Q_0, h \in \mathbb Z_v\}$ a complete set of orthogonal primitive idempotents, which forms a $k$-basis of $\Lambda_0 \otimes_k k \mathbb Z_v$.
Let $\Lambda_1= \Lambda'_1 + \Lambda''_1$ as a direct sum of vector spaces.
Note that $\Lambda$ is generated by $\Lambda_0$ and $\Lambda_1$, so  $\Lambda_r$ is spanned by the elements of the form $x_1 \cdots x_r$, with $x_t\in \Lambda'_1\cup \Lambda''_1$.
Thus $\Lambda_t\otimes_k  k \mathbb Z_v^*$ is spanned by the elements of the form $x_1 \cdots x_r \delta_h$ with $x_t\in \Lambda'_1\cup \Lambda''_1$ and $h\in \mathbb Z_v$.
But we have that
$$x_1\cdots x_t \delta_h = x_1 \delta_{h_1} \cdots x_t \delta_{h_r}$$
with $h_r =h$ and $h_{t-1} = h_t$ if $x_t\in \Lambda''_1$ and $h_{t-1} = h_t+1$ if $x_t\in \Lambda'_1$.
This shows that $\Lambda\# k\mathbb Z_v^*$ is generated by $\Lambda_0 \otimes_k k \mathbb Z_v$ and $\Lambda_1\otimes k\mathbb Z_v^*$.

Since $\{\delta_{h} | h\in \mathbb Z_v^*\}$ forms a basis of $\Lambda\# k\mathbb Z_v^*$.
If a relation subspace for $\Lambda$ is $R$, then  $R\otimes_k k\mathbb Z_v^*$ is a relation subspace for $\Lambda\# k\mathbb Z_v^*$.

Assume that $\Lambda$ is a quadratic algebra, then $R$ can be chosen as a subspace of $\Lambda_1\otimes_{\Lambda_0} \Lambda_1$.
Thus $R\otimes_k k\mathbb Z_v^* \subset \Lambda_1\# k \mathbb Z_v^*\otimes_{\Lambda_0\# k \mathbb Z_v^*} \Lambda_1\# k \mathbb Z_v^* $ and $\Lambda\# k \mathbb Z_v^*$ is a quadratic algebra.
Recall that the $\Lambda_0 $-$ \Lambda_0 $-sub-bimodules $K^t$ of $\LL_1^{\otimes t}$ is defined by $K^0 = \Lambda_0$, $K^1 = \LL_1$ , $K^2 = R$ and $K^{t+1} = \LL_1 K^t\cap K^t\LL_1$ for all $t \ge 2$.
Define $\Lambda_0\otimes k\mathbb Z_v^* $-$ \Lambda_0 \otimes k\mathbb Z_v^* $-sub-bimodules $\bar{K}^t$ of $(\Lambda_1 \otimes k \mathbb Z_v^* )^{ \otimes_{ \Lambda_0 \otimes k\mathbb Z_v^*} t}$ by the formulae $\bar{K}^0 = \Lambda_0 \otimes_k k \mathbb Z_v^*$, $\bar{K}^1 = \Lambda_1 \otimes_k k \mathbb Z_v^*$ , $\bar{K}^2 = R\otimes_k k \mathbb Z_v^*$ and $\bar{K}^{t+1} = (\Lambda_1 \otimes_k k \mathbb Z_v^*) \bar{K}^t \cap \bar{K}^t (\Lambda_1\otimes_k k \mathbb Z_v^*)$ for all $t \ge 2$.
Similarly, we have the following

\begin{lem}\label{KCl}
$$\bar{K}^{t} = K^{t} \otimes k \mathbb Z_v^*, $$
and
$$ (\Lambda \# k\mathbb Z_v^*)\bar{K}^{t} \simeq \Lambda \otimes K^t \otimes_k k \mathbb Z_v^*.$$
\end{lem}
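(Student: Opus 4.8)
The plan is to prove both equalities by induction on $t$, the whole argument resting on one explicit description of the ambient bimodules together with the exactness of $-\otimes_k k\mathbb Z_v^*$. First I would unwind the structure over the degree-zero part $\Lambda_0\otimes_k k\mathbb Z_v^*$. Since $\Lambda_0$ lies in $\mathbb Z_v$-degree $0$, the rule $(x\delta_h)(y\delta_g)=xy'_{h-g}\delta_g$ gives $\delta_h\delta_g=0$ for $h\neq g$ and $\delta_h^2=\delta_h$, so $\Lambda_0\otimes_k k\mathbb Z_v^*=\bigoplus_{h\in\mathbb Z_v}\Lambda_0\delta_h$ as a direct sum of mutually orthogonal subalgebras, with block idempotents $\epsilon_h=\sum_{i\in Q_0}e_i\delta_h$. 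Writing $\Lambda_1=\bigoplus_{d\in\mathbb Z_v}V_d$ for the $\mathbb Z_v$-grading of $\Lambda_1$, the same rule shows $\epsilon_h(\Lambda_1\otimes_k k\mathbb Z_v^*)\epsilon_{h'}=V_{h-h'}\delta_{h'}$ as $\Lambda_0$-bimodules. Note that the $\mathbb Z_v$-grading of $\Lambda$ is the grading of $T_{\Lambda_0}(\Lambda_1)$ counting the $U$-letters of a path modulo $v$, so its descent to $\Lambda$ forces the relation space $R$ to be $\mathbb Z_v$-homogeneous; hence each $K^t$ is a $\mathbb Z_v$-homogeneous $\Lambda_0$-sub-bimodule of $\Lambda_1^{\otimes t}$. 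Matrix-multiplying the blocks above then produces, for every $t$, a natural $(\Lambda_0\otimes_k k\mathbb Z_v^*)$-bimodule isomorphism
$$(\Lambda_1\otimes_k k\mathbb Z_v^*)^{\otimes_{\Lambda_0\otimes_k k\mathbb Z_v^*}\,t}\;\cong\;\Lambda_1^{\otimes_{\Lambda_0}\,t}\otimes_k k\mathbb Z_v^*,$$
under which, for a $\mathbb Z_v$-homogeneous $\Lambda_0$-sub-bimodule $A\subseteq\Lambda_1^{\otimes t}$, the subspace $A\otimes_k k\mathbb Z_v^*$ on the right corresponds to $\bigoplus_{h\in\mathbb Z_v}A\delta_h$ on the left; the same computation with $\Lambda$ in place of $\Lambda_1^{\otimes t}$ identifies $\Lambda\#k\mathbb Z_v^*$ with $\Lambda\otimes_k k\mathbb Z_v^*$ as a $(\Lambda_0\otimes_k k\mathbb Z_v^*)$-bimodule.

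Two facts then give the first equality. (a) For a homogeneous $\Lambda_0$-sub-bimodule $A\subseteq\Lambda_1^{\otimes t}$, left (resp.\ right) multiplication by $\Lambda_1\otimes_k k\mathbb Z_v^*$ sends $A\otimes_k k\mathbb Z_v^*$ to $(\Lambda_1A)\otimes_k k\mathbb Z_v^*$ (resp.\ to $(A\Lambda_1)\otimes_k k\mathbb Z_v^*$), because on homogeneous elements the smash-product multiplication is that of $\Lambda$ together with a purely formal rule for the $\delta$'s, hence is compatible with the identification block by block. (b) For $\Lambda_0$-sub-bimodules $A,B\subseteq\Lambda_1^{\otimes t}$ one has $(A\otimes_k k\mathbb Z_v^*)\cap(B\otimes_k k\mathbb Z_v^*)=(A\cap B)\otimes_k k\mathbb Z_v^*$, since $k\mathbb Z_v^*$ is $k$-free so $-\otimes_k k\mathbb Z_v^*$ is exact (concretely, decompose along the basis $\{\delta_h\}$ and intersect componentwise). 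Now induct on $t$: the cases $t=0,1,2$ are the definitions $\bar{K}^0=\Lambda_0\otimes_k k\mathbb Z_v^*$, $\bar{K}^1=\Lambda_1\otimes_k k\mathbb Z_v^*$, $\bar{K}^2=R\otimes_k k\mathbb Z_v^*$, and assuming $\bar{K}^t=K^t\otimes_k k\mathbb Z_v^*$,
\begin{align*}
\bar{K}^{t+1}&=(\Lambda_1\otimes_k k\mathbb Z_v^*)\bar{K}^t\cap\bar{K}^t(\Lambda_1\otimes_k k\mathbb Z_v^*)\\
&=(\Lambda_1K^t)\otimes_k k\mathbb Z_v^*\;\cap\;(K^t\Lambda_1)\otimes_k k\mathbb Z_v^*\\
&=(\Lambda_1K^t\cap K^t\Lambda_1)\otimes_k k\mathbb Z_v^*=K^{t+1}\otimes_k k\mathbb Z_v^*,
\end{align*}
using (a) for the second equality and (b) for the third.

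For the second formula, $(\Lambda\#k\mathbb Z_v^*)\bar{K}^t$ is the term $(\Lambda\#k\mathbb Z_v^*)\otimes_{\Lambda_0\otimes_k k\mathbb Z_v^*}\bar{K}^t$ of the Koszul complex of $\Lambda\#k\mathbb Z_v^*$; substituting $\bar{K}^t=K^t\otimes_k k\mathbb Z_v^*$ and $\Lambda\#k\mathbb Z_v^*\cong\Lambda\otimes_k k\mathbb Z_v^*$ and computing this tensor product block by block over $\bigoplus_h\Lambda_0\delta_h$ exactly as before (here $\mathbb Z_v$-homogeneity of $K^t$ is what lets one reassemble $\Lambda\otimes_{\Lambda_0}K^t$ from its $\mathbb Z_v$-graded pieces) yields $\bigoplus_{h\in\mathbb Z_v}(\Lambda\otimes_{\Lambda_0}K^t)\delta_h=\Lambda\otimes_{\Lambda_0}K^t\otimes_k k\mathbb Z_v^*$, which is the assertion. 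Alternatively, the whole lemma is an instance of the classical equivalence between the category of $\mathbb Z_v$-graded $\Lambda$-modules and the category of $\Lambda\#k\mathbb Z_v^*$-modules, under which the $\mathbb Z_v$-graded left Koszul complex of $\Lambda$ goes term by term to that of $\Lambda\#k\mathbb Z_v^*$.

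The main obstacle is the bookkeeping in the first paragraph: reading off the $(\Lambda_0\otimes_k k\mathbb Z_v^*)$-bimodule block decomposition of $\Lambda_1\otimes_k k\mathbb Z_v^*$ from the twisted multiplication, and checking that the resulting isomorphism of $t$-fold tensor powers intertwines the action of $\Lambda_1\otimes_k k\mathbb Z_v^*$ with that of $\Lambda_1$; once that is in place, the induction and the intersection step are routine.
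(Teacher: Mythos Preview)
Your argument is correct; the paper itself omits the proof entirely, writing only ``Similarly, we have the following'' before the statement, so there is no paper-side proof to compare against. The route you take---reading off the block decomposition $\epsilon_h(\Lambda_1\otimes_k k\mathbb Z_v^*)\epsilon_{h'}=V_{h-h'}\delta_{h'}$, matching the $t$-fold tensor powers, and then inducting using exactness of $-\otimes_k k\mathbb Z_v^*$ for the intersection step---is exactly the verification the paper suppresses, and your remark that the $\mathbb Z_v$-grading on $\Lambda$ forces $R$ (hence each $K^t$) to be $\mathbb Z_v$-homogeneous is the technical point that makes $R\otimes_k k\mathbb Z_v^*$ a genuine $(\Lambda_0\otimes_k k\mathbb Z_v^*)$-sub-bimodule under your identification. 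The alternative you note at the end, via the Cohen--Montgomery equivalence between $\mathbb Z_v$-graded $\Lambda$-modules and $\Lambda\#k\mathbb Z_v^*$-modules, is a slicker way to package the same computation and is perhaps closer in spirit to what the paper's ``similarly'' intends.
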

$K^t$ is a $\Lambda_0$-$\Lambda_0$-bimodule concentrated in degree $t$, thus $\bar{K}^t$ is a $\Lambda_0 \otimes_k k  \mathbb Z_v^* $-$\Lambda_0 \otimes_k k  \mathbb Z_v^* $-bimodule concentrated in degree $t$.
Apply $- \otimes_k k  \mathbb Z_v^* $ on the Koszul complex \eqref{KoszulComplex} of $\Lambda$, we get

\begin{pro}\label{SMkoszulC}
\eqqc{KoszulComplexSM}{\cdots \longrightarrow \Lambda \otimes K^t \otimes_k k \mathbb Z_v^* \longrightarrow \cdots \longrightarrow \Lambda \otimes K^1 \otimes_k  k \mathbb Z_v^* \longrightarrow \Lambda \otimes K^0 \otimes_k k  \mathbb Z_v^* \longrightarrow 0 }
is the left Koszul complex of $\Lambda \#k\mathbb Z_v^*$.
\end{pro}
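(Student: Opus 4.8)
The plan is to verify that the complex \eqref{KoszulComplexSM} agrees, term by term and differential by differential, with the left Koszul complex of $\Lambda\#k\mathbb Z_v^*$ built from its own quadratic data as in \eqref{KoszulComplex}; no Koszulity of $\Lambda$ is needed here, since the statement concerns the \emph{shape} of the complex, not its exactness. As shown in the paragraphs preceding Lemma \ref{KCl}, $\Lambda\#k\mathbb Z_v^*$ is a locally finite quadratic algebra with degree-one part $\Lambda_1\otimes_k k\mathbb Z_v^*$ and relation subspace $\bar K^2 = R\otimes_k k\mathbb Z_v^*$. Hence the bimodules governing its left Koszul complex are exactly the $\bar K^t$ defined just before Lemma \ref{KCl}, and the $t$-th term of that complex is $(\Lambda\#k\mathbb Z_v^*)\otimes\bar K^t$ (tensor over the degree-zero part $\Lambda_0\otimes_k k\mathbb Z_v^*$). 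By Lemma \ref{KCl} this is $\Lambda\otimes K^t\otimes_k k\mathbb Z_v^*$, which is exactly the $t$-th term of \eqref{KoszulComplexSM}, so the two complexes have identical terms in every homological degree.

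Next I would check that the differentials correspond. The Koszul differential of $\Lambda\#k\mathbb Z_v^*$ is, as for any quadratic algebra, the restriction of the multiplication map $a\otimes v_1\otimes\cdots\otimes v_t\mapsto av_1\otimes v_2\otimes\cdots\otimes v_t$ on $(\Lambda\#k\mathbb Z_v^*)\otimes(\Lambda_1\otimes_k k\mathbb Z_v^*)^{\otimes t}$. Using the smash multiplication rule $(x\otimes\delta_h)(y\otimes\delta_g)=xy'_{h-g}\otimes\delta_g$, together with the fact that $K^t$ is homogeneous of ordinary degree $t$ — so that within an element of $\bar K^t=K^t\otimes_k k\mathbb Z_v^*$ the component $y'_{h-g}$ selected by the multiplication is a fixed one — one sees that, under the isomorphism of Lemma \ref{KCl}, this differential becomes $\partial\otimes\mathrm{id}_{k\mathbb Z_v^*}$, where $\partial$ is the Koszul differential of $\Lambda$ described after \eqref{KoszulComplex}. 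Equivalently, in the ``right multiplication by $u$'' picture, the differential of the Koszul complex of $\Lambda\#k\mathbb Z_v^*$ is right multiplication by the canonical element $\sum_{\alpha\in Q_1,\,h\in\mathbb Z_v}(\alpha\otimes\delta_h)\otimes(\alpha^*\otimes\delta_h)$, which on each $\mathbb Z_v$-layer restricts to the image of $u$, so the layer-by-layer picture reproduces the Koszul complex of $\Lambda$.

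The one point demanding genuine care — and the step I expect to be the main obstacle — is the bookkeeping of the $\mathbb Z_v$-indices: one must confirm that the left $\Lambda\#k\mathbb Z_v^*$-module identification of Lemma \ref{KCl}, the grading shift hidden in $\bar K^{t+1}=(\Lambda_1\otimes_k k\mathbb Z_v^*)\bar K^t\cap\bar K^t(\Lambda_1\otimes_k k\mathbb Z_v^*)$, and the component selection $y\mapsto y'_{h-g}$ in the smash product all line up consistently, so that the termwise isomorphisms of Lemma \ref{KCl} assemble into an isomorphism of complexes. This is purely a matter of tracking the $\delta_h$ indices through each identification; once that compatibility is checked, the conclusion that \eqref{KoszulComplexSM} is the left Koszul complex of $\Lambda\#k\mathbb Z_v^*$ is immediate.
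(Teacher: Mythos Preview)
Your proposal is correct and follows essentially the same approach as the paper: the paper simply says ``Apply $-\otimes_k k\mathbb Z_v^*$ on the Koszul complex \eqref{KoszulComplex} of $\Lambda$'' and invokes Lemma~\ref{KCl} for the term identification, leaving the compatibility of the differentials implicit. Your write-up is in fact more thorough than the paper's, since you explicitly verify that the differentials match under the isomorphism of Lemma~\ref{KCl} and flag the $\delta_h$-index bookkeeping, which the paper does not spell out.
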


If  $\Lambda$  is  $(p, q)$-Koszul with $p, q \ge 2$, then we easily get the following properties from Proposition \ref{pqkoszul}:
\begin{enumerate}
\item $\Lambda_t\otimes k \mathbb Z_v^*=0$ for all $t > q$,
\item $K^s \otimes k \mathbb Z_v^*=0 $ for all $s > p$, and
\item the only non-zero homology modules of \eqref{KoszulComplexSM} are $\Lambda_0$ in degree $0$, and $\Lambda_p \otimes K^q \otimes k \mathbb Z_v^*$ in degree $p  + q$.
\end{enumerate}

The following theorem follows easily.

\begin{thm}\label{pqkoszulsmeq} Assume that  $\Lambda$ is a quadratic algebra and $p, q \ge 2$.
Then  $\Lambda$ is a $(p, q)$-Koszul algebra if and only if $\Lambda\#  k \mathbb Z_v^*$ is  a $(p, q)$-Koszul algebra.
\end{thm}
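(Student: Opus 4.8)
The plan is to reduce the statement to Proposition~\ref{pqkoszul}, which characterizes $(p,q)$-Koszulity of a quadratic algebra in terms of three conditions: vanishing of $\Lambda_t$ for $t>q$, vanishing of $K^s$ for $s>p$, and the homology of the restricted left Koszul complex \eqref{KoszulComplexi} being concentrated in degrees $0$ and $p+q$. First I would record the standing facts already established in this section: that $\Lambda\#k\mathbb Z_v^*$ is a quadratic algebra when $\Lambda$ is (with relation subspace $R\otimes_k k\mathbb Z_v^*$), that by Lemma~\ref{KCl} the sub-bimodules $\bar K^t$ governing its Koszul complex satisfy $\bar K^t = K^t\otimes_k k\mathbb Z_v^*$ together with $(\Lambda\#k\mathbb Z_v^*)\bar K^t \simeq \Lambda\otimes K^t\otimes_k k\mathbb Z_v^*$, and that by Proposition~\ref{SMkoszulC} the left Koszul complex of $\Lambda\#k\mathbb Z_v^*$ is precisely \eqref{KoszulComplexSM}, i.e. the complex \eqref{KoszulComplex} of $\Lambda$ tensored over $k$ with $k\mathbb Z_v^*$. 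Since $-\otimes_k k\mathbb Z_v^*$ is exact and faithfully flat (indeed $k\mathbb Z_v^*$ is just a finite-dimensional $k$-vector space, and $v=0$ gives a free module), tensoring commutes with taking homology.

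The forward direction is then immediate and is essentially the three bullet points already displayed in the excerpt just before the theorem: if $\Lambda$ is $(p,q)$-Koszul with $p,q\ge 2$, then $\Lambda_t = 0$ for $t>q$ forces $\Lambda_t\otimes_k k\mathbb Z_v^* = 0$; $K^s = 0$ for $s>p$ forces $\bar K^s = K^s\otimes_k k\mathbb Z_v^* = 0$; and applying $-\otimes_k k\mathbb Z_v^*$ to the restricted Koszul complex \eqref{KoszulComplexi} of $\Lambda_0 e_i$ shows that the homology of the corresponding restricted complex for each idempotent $e_i\delta_h$ of $\Lambda\#k\mathbb Z_v^*$ is concentrated in degrees $0$ (where it is $\Lambda_0 e_i\otimes$ the appropriate grading piece, i.e. the simple top) and $p+q$ (where it is $\Lambda_p\otimes K^q e_i$ tensored with $k\mathbb Z_v^*$). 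By Proposition~\ref{pqkoszul}, $\Lambda\#k\mathbb Z_v^*$ is $(p,q)$-Koszul. For the converse, I would run the same argument backwards: $\Lambda_t = \Lambda_t\otimes_k k_{e}$ is a direct summand (as a vector space) of $\Lambda_t\otimes_k k\mathbb Z_v^*$, so vanishing of the latter gives vanishing of the former; similarly $K^s$ is recovered from $\bar K^s$; and exactness of $-\otimes_k k\mathbb Z_v^*$ together with faithfulness lets one detect the homology of \eqref{KoszulComplexi} inside that of \eqref{KoszulComplexSM}. Again Proposition~\ref{pqkoszul} finishes the job.

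The only genuinely delicate point, and the one I would spell out carefully, is the bookkeeping of the internal grading versus the $\mathbb Z_v$-grading in the homology computation: one must check that the homology module $\Lambda_p\otimes K^q\otimes_k k\mathbb Z_v^*$ sits in the single total degree $p+q$ for $\Lambda\#k\mathbb Z_v^*$ exactly as it does for $\Lambda$, using that $\bar K^t$ and $K^t$ are both concentrated in degree $t$ (noted right after Lemma~\ref{KCl}) and that the smash product respects the original length grading. There is also a harmless indexing subtlety — the decomposition of each term of the Koszul complex of $\Lambda\#k\mathbb Z_v^*$ into indecomposable projectives $(\Lambda\#k\mathbb Z_v^*)e_j\delta_{h'}$ corresponds, under $-\otimes_k k\mathbb Z_v^*$, to the decomposition $\Lambda\otimes K^te_i = \bigoplus_j \Lambda e_j\otimes e_j K^t e_i$ indexed by $\mathbb Z_v$; but since $k\mathbb Z_v^*$ is a finite (or, for $v=0$, free) basis and everything in sight is additive, no new phenomena arise. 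Modulo this grading bookkeeping the theorem follows, as the excerpt indicates, "easily" from Proposition~\ref{pqkoszul} and Proposition~\ref{SMkoszulC}.
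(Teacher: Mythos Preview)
Your proposal is correct and follows essentially the same route as the paper: reduce to Proposition~\ref{pqkoszul} via Lemma~\ref{KCl} and Proposition~\ref{SMkoszulC}, using that $-\otimes_k k\mathbb Z_v^*$ is exact and faithful to transfer the three conditions in both directions. The paper in fact gives no proof beyond listing the forward-direction bullet points and declaring the theorem ``follows easily,'' so your write-up is strictly more detailed (spelling out the converse and the idempotent/grading bookkeeping) while using exactly the same ingredients.
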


If $\Lambda$ is an $n$-pretranslation algebra with an admissible $n$-translation quiver, the {\em $v$-extension of $\Lambda$} is defined as  the smash product $\tilde{\Lambda}\# \mathbb Z_v^*$ of the trivial extension $\tilde{\Lambda}$ of $\Lambda$ with $\mathbb Z_v^*$, defined on the $\mathbb Z_v^*$-grading determined by the vector spaces spanned by the new arrows of the trivial extension.

As a corollary of Theorem \ref{pqkoszulsmeq}, we have the following proposition.

\begin{pro}\label{extendible}
Let $\Lambda$ be an extendable $n$-translation algebra with an admissible $n$-translation quiver, then its $v$-extension is an $(n+1)$-translation algebra.
\end{pro}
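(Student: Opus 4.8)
The plan is to combine the three ingredients already established: the definition of the $v$-extension, Proposition~\ref{exttraqr}, and Theorem~\ref{pqkoszulsmeq}. Recall that $\Lambda$ being extendable means its trivial extension $\tilde{\Lambda}$ is an $(n+1)$-translation algebra with an admissible quiver; in particular $\tilde{\Lambda}$ is $(n+2,l')$-Koszul for some $l' \in \mathbb N \cup \{\infty\}$, and by Proposition~\ref{exttraqr} its quiver $\tilde Q$ is a stable $(n+1)$-translation quiver. The $v$-extension is by definition $\tilde{\Lambda}\# k\mathbb Z_v^*$, formed using the $\mathbb Z_v$-grading of $\tilde{\Lambda}$ determined by the span of the new arrows $\{\beta_i\}$. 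So the two things to check are: (a) $\tilde{\Lambda}\# k\mathbb Z_v^*$ is an $(n+1)$-pretranslation algebra, i.e. its bound quiver is an $(n+1)$-translation quiver; and (b) it is $(n+2,l)$-Koszul for some $l$.

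For part (b), I would argue as follows. Since $\tilde\Lambda$ is an $(n+1)$-translation algebra, it is $(n+2,l')$-Koszul; as $n+2 \ge 2$ and (unless $l' = \infty$, in which case $\tilde\Lambda$ is Koszul and the smash product is Koszul too by the $\infty$ case) $l' \ge 2$ when $n \ge 0$ — here one should be a little careful about the degenerate possibility $l' \le 1$, but $\tilde\Lambda$ has Loewy length $n+3 \ge 3$ so $l' \ge 1$, and the relevant hypotheses of Theorem~\ref{pqkoszulsmeq} require $p,q\ge 2$, which holds with $p = n+2 \ge 2$ and $q = l'$; the case $l' = \infty$ is covered by the last sentence of the paragraph before Proposition~\ref{char_pqk} (Koszulity is preserved under smash product, which also follows from Proposition~\ref{SMkoszulC}). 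Applying Theorem~\ref{pqkoszulsmeq} with $\Lambda$ there replaced by $\tilde\Lambda$ and $(p,q) = (n+2,l')$ gives that $\tilde\Lambda \# k\mathbb Z_v^*$ is $(n+2,l')$-Koszul. Hence it is $(n+1+1, l')$-Koszul in the notation of the definition of $n$-translation algebra, which is exactly the Koszulity requirement for being an $(n+1)$-translation algebra.

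For part (a), I would verify that smashing with $\mathbb Z_v^*$ does not disturb the $(n+1)$-translation-quiver structure. The quiver $\widetilde{Q}\#\mathbb Z_v$ of $\tilde\Lambda\# k\mathbb Z_v^*$ has vertex set $\widetilde Q_0 \times \mathbb Z_v$ and arrows $\alpha\delta_h$ lifting each arrow $\alpha$ of $\widetilde Q$; a path in the smash quiver projects bijectively (for fixed endpoint grading) onto a path in $\widetilde Q$, and a relation of $\tilde\Lambda$ lifts to the relation subspace $\tilde\rho\otimes_k k\mathbb Z_v^*$ by the discussion preceding Lemma~\ref{KCl}. Since $\tilde Q$ is a stable $(n+1)$-translation quiver with trivial translation and maximal bound paths of length $n+2$ all returning to their starting vertex, the same holds componentwise upstairs: maximal bound paths in $\widetilde Q \#\mathbb Z_v$ have length $n+2$, and the stable $(n+1)$-translation $\tilde\tau$ is inherited (it permutes the $\mathbb Z_v$-copies according to the grading shift along $p_i$). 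Thus conditions 1--4 (equivalently the non-degenerate-bilinear-form condition of Lemma~\ref{nondegbil}) transfer from $\tilde\Lambda$ to its smash product, essentially because the local structure $e_i \tilde\Lambda e_j$ at each vertex is reproduced, graded piece by graded piece, at each lifted vertex. The main obstacle is precisely this bookkeeping in part (a): one must check carefully that bound paths, their linear dependences, and the stark/shiftable conditions all behave well under the grading decomposition $\Lambda = \sum_r \Lambda'_r$ and the induced splitting of paths $x_1\cdots x_t\delta_h = x_1\delta_{h_1}\cdots x_t\delta_{h_r}$; once that is in place, the $(n+1)$-translation quiver axioms follow mechanically, and combined with (b) we conclude that the $v$-extension is an $(n+1)$-translation algebra.
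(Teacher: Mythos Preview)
Your proposal is correct and takes essentially the same approach as the paper. The paper records this proposition as a one-line corollary of Theorem~\ref{pqkoszulsmeq}, which handles only your part~(b); the $(n+1)$-pretranslation structure in your part~(a) is established immediately afterwards in Proposition~\ref{Zinfcon}, whose argument your sketch anticipates almost exactly (lifting vertices, arrows, relations, and maximal bound paths along the $\mathbb Z_v$-covering).
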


Let $Q$ be an admissible $n$-translation quiver with $n$-translation $\tau=\tau_{[n]}$, and projective vertex set $\mathcal P$ and injective vertex set $\mathcal I$.
We define the $n+1$ translation quiver $\mathbb Z_v|_n Q$ as follows.

The vertex set $$\mathbb Z_v|_n Q_0 = Q_0 \times \mathbb Z_v =\{(i , t)| i\in Q_0, t \in \mathbb Z_v\},$$
and the arrow set
$$\arr{cll}{\mathbb Z_v|_n Q_1 & =& \mathbb Z_v \times Q_1 \cup \mathbb Z_v \times (Q_0\setminus \mathcal P)\\ &=& \{(\alpha,t): (i,t)\longrightarrow (j,t) | \alpha:i\longrightarrow j \in Q_1, t \in \mathbb Z_v.\}\\ && \cup \quad\{(\beta_i , t): (i, t-1) \longrightarrow (\tau i, t) | i \in Q_0\setminus \mathcal P, t \in \mathbb Z_v.\}
}$$

Let $t\in \mathbb Z_v$.
Denote by $p(t)$ the path $$p(t)= (\alpha_l,t) \cdots (\alpha_1,t) $$ for each path $p= \alpha_l \cdots \alpha_1 $ in the quiver $Q$ and denote by $z(t)$ the linear combination $$z(t) = \sum_{s} a_sp_s(t) $$ for each linear combination of the paths $z = \sum_{s} a_sp_s $ in the quiver $Q$.
Let $Q(t)$ be the subquiver with vertex set $Q(t)_0= \{(i, t) | i \in  Q_0\}$ and arrow set $Q(t)_1= \{(\alpha, t) | \alpha  \in  Q_1 \}$ and relation set $\rho(t)= \{z(t) | z \in \rho\}$, for each $t\in \mathbb Z$.
Then $Q(t) = (Q(t)_0, Q(t)_1, \rho(t))$ is a subquiver of $\mathbb Z_v|_n Q$ isomorphic to $Q$ as $n$-translation quivers.

Let $ \rho(t)^* = \{(\tau(\alpha), t+1) (\beta_i, t)-(\beta_j, t)(\alpha, t) |i, j\in Q_{0}\setminus \mathcal P, \alpha: i \to j \in Q_1\} \cup \{(\beta_{\tau i}, t+1) (\beta_i, t) | i, \tau i\in Q_0\setminus \mathcal P\},$ take $$\rho_{ \mathbb Z_v|_n Q} = \cup_{t\in \mathbb Z_v} (\rho(t)\cup \rho(t)^*) $$
as the relation set for the bound quiver $\mathbb Z_v|_n Q $.

Define $\tau_{[n+1]}: (i,t) \longrightarrow (i,t-1)$ for $(i,t) \in \mathbb Z_v|_n Q_0 $.

By Proposition \ref{exttraqr}, we know that the trivial extension $\tilde{\Lambda}$ is an $(n+1)$-pretranslation algebra with stable $(n+1)$-translation quiver $\tilde{Q} = (\tilde{Q}_0, \tilde{Q}_1, \tilde{\rho})$ with trivial translation.
This quiver has the same vertex set as $Q$, arrow set $\tilde{Q}_1 = Q_1 \cup \{\beta_i : i \to \tau_{[n]} i |i \in Q_0\setminus \mathcal P\}$ and a relation set $\tilde{\rho}= \rho \cup \{\beta_{\tau_{[n]} i}\beta_i | i, \tau_{[n]} i \in Q_0\setminus \mathcal P\} \cup \{\tau_{[n]}(\alpha)\beta_i-\beta_j\alpha| \alpha:i\to j \in Q_1, i,j \in Q_0\setminus \mathcal P\}.$
Here $\beta_i = p_i^*$ for some fixed bound path $p_i$ of maximal length in $\Lambda$ for each $i \in Q_0\setminus \mathcal P$.
Let $\tilde{\Lambda}_1$ be the space spanned by $\Lambda_1$ and $\{\beta_i : i \to \tau_{[n]} i |i \in Q_0\setminus \mathcal P\}$, we have that $\tilde{\Lambda} =\tilde{\Lambda}_0 + \tilde{\Lambda}_1 + \cdots + \tilde{\Lambda}_{n+1}$, with $\tilde{\Lambda}_t = \tilde{ \Lambda }_1^t $ for $1 \le t \le n+1$.

Let $U$ be the subspace of $\tilde{\Lambda}_1$ spanned by the set $\{\beta_i : i \to \tau_{[n]} i |i \in Q_0\setminus \mathcal P\}$ and let $\tilde{\Lambda}\# \mathbb Z_v$ be the smash product defined on the $\mathbb Z_v$-grading determined by $U$.

\begin{pro}\label{Zinfcon}
Let $Q$ be an admissible $n$-translation quiver, and let $\Lambda $ be the $n$-pretranslation algebra defined by $Q$.
Then $\mathbb Z_v|_n Q $ is the bound quiver for $\tilde{\Lambda}\# \mathbb Z_v^*$.
Especially  $\mathbb Z_v|_n Q$ is a stable $(n+1)$-translation quiver with translation $\tau_{[n+1]}$.
\end{pro}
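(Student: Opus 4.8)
The plan is to compute the bound quiver of $\tilde{\Lambda}\# k\mathbb Z_v^*$ directly from the $\mathbb Z_v$-grading on $\tilde{\Lambda}$ determined by the subspace $U$ spanned by the new arrows $\{\beta_i\}$, and then match it term by term with the combinatorial data defining $\mathbb Z_v|_n Q$. First I would recall, from Proposition \ref{exttraqr}, that $\tilde{\Lambda} = k\tilde Q/(\tilde\rho)$ with $\tilde Q_0 = Q_0$, $\tilde Q_1 = Q_1\cup\{\beta_i : i\to\tau i\mid i\in Q_0\setminus\mathcal P\}$, and $\tilde\rho = \rho\cup\{\beta_{\tau i}\beta_i\}\cup\{\tau(\alpha)\beta_i-\beta_j\alpha\}$; and that $\tilde\Lambda$ is generated in degrees $0$ and $1$. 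The $\mathbb Z_v$-grading determined by $U$ assigns $\mathbb Z_v$-degree $0$ to the old arrows in $Q_1$ and $\mathbb Z_v$-degree $1$ to each $\beta_i$, this being precisely the grading for which $\tilde\Lambda_0' = \Lambda$ (as the subalgebra generated by $\Lambda_0$ and $\Lambda_1$) and each successive $W_r$-step inserts one $\beta$.

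Next I would apply the standard description of the smash product of a path algebra with a finite cyclic group. For a $\mathbb Z_v$-graded quiver algebra, $k\tilde Q/(\tilde\rho) \# k\mathbb Z_v^*$ has vertex set $\tilde Q_0\times\mathbb Z_v$, with $\{e_i\delta_t\}$ a complete set of orthogonal primitive idempotents (as already noted in the excerpt), and its arrows are the arrows $\beta$ of $\tilde Q$ lifted to arrows $(\beta,t)$ from $s(\beta)\otimes\delta_{t-\deg_{\mathbb Z_v}\beta}$ to $t(\beta)\otimes\delta_t$, for each $t\in\mathbb Z_v$. Here I would use the multiplication rule $(x\otimes\delta_h)(y\otimes\delta_g) = xy_{h-g}'\otimes\delta_g$ to check that a degree-$0$ arrow $\alpha:i\to j$ lifts to $(\alpha,t):(i,t)\to(j,t)$ while a degree-$1$ arrow $\beta_i:i\to\tau i$ lifts to $(\beta_i,t):(i,t-1)\to(\tau i,t)$ --- exactly the arrow set $\mathbb Z_v|_nQ_1$. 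Then, using that $R\otimes_k k\mathbb Z_v^*$ is a relation subspace for $\tilde\Lambda\# k\mathbb Z_v^*$ when $R=(\tilde\rho)$ is one for $\tilde\Lambda$ (the statement in the excerpt preceding Lemma \ref{KCl}), I would show that each homogeneous relation $z\in\tilde\rho$ contributes, for each $t$, the relation $z$ with its arrows lifted according to the rule just described; tracking the $\mathbb Z_v$-degrees shows $\rho\leadsto\rho(t)$, $\tau(\alpha)\beta_i-\beta_j\alpha\leadsto (\tau(\alpha),t+1)(\beta_i,t)-(\beta_j,t)(\alpha,t)$, and $\beta_{\tau i}\beta_i\leadsto(\beta_{\tau i},t+1)(\beta_i,t)$, which is precisely $\rho_{\mathbb Z_v|_nQ}=\bigcup_t(\rho(t)\cup\rho(t)^*)$. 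This identifies the bound quiver of $\tilde\Lambda\# k\mathbb Z_v^*$ with $\mathbb Z_v|_nQ$.

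For the final assertion, I would first invoke Proposition \ref{extendible} (or directly Theorem \ref{pqkoszulsmeq} together with Proposition \ref{exttraqr}): $\tilde\Lambda$ is a stable $(n+1)$-pretranslation algebra, hence an $(n+1,l)$-Koszul algebra for suitable $l$, so $\tilde\Lambda\# k\mathbb Z_v^*$ is again $(n+1,l)$-Koszul; thus its bound quiver $\mathbb Z_v|_nQ$ is an $(n+1)$-translation quiver. That the translation is $\tau_{[n+1]}\colon(i,t)\mapsto(i,t-1)$ and that it is \emph{stable} follows by checking that every maximal bound path of $\tilde Q$, which by Proposition \ref{exttraqr} has length $n+2$ and returns to its starting vertex through exactly one $\beta_j$, lifts to a maximal bound path in $\mathbb Z_v|_nQ$ from $(i,t)$ to $(i,t-1)$ (the single $\beta_j$ shifting the $\mathbb Z_v$-coordinate by $1$, i.e.\ one full turn modulo the identification); so $\mathcal P=\mathcal I=\emptyset$ and the translation is defined everywhere. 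I expect the main obstacle to be bookkeeping: making the bijection between relations of $\tilde\Lambda\# k\mathbb Z_v^*$ and the set $\rho_{\mathbb Z_v|_nQ}$ completely precise --- in particular being careful that $R\otimes_k k\mathbb Z_v^*$ really does decompose into the lifted relations $\rho(t)\cup\rho(t)^*$ with the correct $\delta$-indices on each arrow, and verifying that no spurious relations or arrows appear because $\tilde\Lambda$ is generated in degrees $0$ and $1$ (so the smash product is too, by the computation in the excerpt). The Koszulity input and the length-$(n+2)$ maximal-path structure are already supplied by the earlier results, so the genuinely new content is just this matching of quivers with relations.
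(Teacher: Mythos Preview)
Your computation of the bound quiver of $\tilde\Lambda\# k\mathbb Z_v^*$ --- vertices, arrows, and relations --- follows the same route as the paper and is correct.

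The gap is in the final assertion. You write ``$\tilde\Lambda$ is a stable $(n+1)$-pretranslation algebra, hence an $(n+1,l)$-Koszul algebra for suitable $l$'', and then invoke Proposition~\ref{extendible} or Theorem~\ref{pqkoszulsmeq}. This inference is false: a pretranslation algebra is \emph{not} automatically almost Koszul --- that implication is exactly the difference between ``$n$-pretranslation'' and ``$n$-translation'' in the paper's terminology. The hypotheses of Proposition~\ref{Zinfcon} say only that $Q$ is an admissible $n$-translation quiver and $\Lambda$ the associated $n$-\emph{pre}translation algebra; no Koszul condition is assumed, and Proposition~\ref{extendible} requires the still stronger hypothesis that $\Lambda$ be extendable. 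So this part of your argument does not go through.

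Fortunately the Koszul detour is unnecessary, and you already have the correct replacement in your last sentences: the conclusion that $\mathbb Z_v|_nQ$ is a stable $(n+1)$-translation quiver is a purely combinatorial statement about bound paths, and it follows directly from Proposition~\ref{exttraqr}. Since $\tilde Q$ is a stable $(n+1)$-translation quiver with trivial translation, every maximal bound path in $\tilde\Lambda$ has length $n+2$, starts and ends at the same vertex, and contains exactly one arrow $\beta_j$. Lifting such a path $p\beta_j q$ to $\tilde\Lambda\# k\mathbb Z_v^*$ via $p\beta_j q\,\delta_m$ gives a maximal bound path from $(i,m)$ to $(i,m+1)$ (the single $\beta_j$ shifts the $\mathbb Z_v$-coordinate by one), and conversely every maximal bound path in the smash product arises this way. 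This shows directly that $\tau_{[n+1]}(i,m+1)=(i,m)$ is the $(n+1)$-translation and that it is defined on every vertex, so $\mathcal P=\mathcal I=\emptyset$. The paper proceeds exactly this way: delete the Koszulity step and your proof matches it.
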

\begin{proof}
Let $\overline{Q}=(\overline{Q}_0,\overline{Q}_1,\overline{\rho})$ be the bound quiver of $\tilde{\Lambda}\#\mathbb Z_v^*$.
Clearly, we have that $\tilde{\Lambda}\#\mathbb Z_v^* =\tilde{\Lambda}_0  \otimes_k \mathbb Z_v^* + \tilde{\Lambda}_1  \otimes_k \mathbb Z_v^* + \cdots + \tilde{\Lambda}_{n+1}  \otimes_k \mathbb Z_v^*$.
Note that $\{e_i\delta_n|i\in Q_0, n\in\mathbb Z_v^*\}$ is a basis of $\tilde{\Lambda}_0 \otimes_k \mathbb Z_v^*$, and we have that $$e_i\delta_{m'}\cdot e_j\delta_m = \left\{\arr{ll}{e_i\delta_{m} & i=j , m=m'\\0& otherwise} \right. ,$$
and $e_i\delta_n \tilde{\Lambda}\#\mathbb Z_v^* e_i\delta_n\simeq e_i \tilde{\Lambda} e_i \simeq e_i \Lambda e_i \simeq k$.
On the other hand, $(\tilde{\Lambda}_0 \otimes_k \mathbb Z_v^*) \cdot (\tilde{\Lambda}_t  \otimes_k \mathbb Z_v^*) =(\tilde{\Lambda}_1  \otimes_k \mathbb Z_v^*) \cdot (\tilde{\Lambda}_0 \otimes_k \mathbb Z_v^*) = \tilde{\Lambda}_1  \otimes_k \mathbb Z_v^* $, so $\{e_i\delta_n|i\in Q_0, n\in\mathbb Z_v^*\}$ is a complete set of orthogonal primitive idempotents.
Write $(i,n) = e_i\delta_n$, thus we have $\overline{Q}_0 = \mathbb Z_v|_n Q_0$.

Since $ Q_1 \cup \{\beta_i : i \to \tau_{[n]} i |i \in Q_0\setminus \mathcal P\}$ forms a basis of $\tilde{\Lambda}_1$,  $\tilde{\Lambda}_0 \otimes_k \mathbb Z_v^*$ and $( Q_1\times \mathbb Z_v^*)\cup (\{\beta_i : i \to \tau_{[n]} i |i \in Q_0\setminus \mathcal P\}\times \mathbb Z_v^*)$ is a basis of  $\tilde{\Lambda}\#\mathbb Z_v^* $, and one see easily that $e_{j'}\delta_{m'} \alpha \delta_m  =\alpha \delta_m e_{i'}\delta_{m''} = \alpha \delta_m$ for any arrow $\alpha:i \to j$ in $Q_1$ if and only if $i'=i, j'=j$ and $m'=m''=m$,  $e_{j'}\delta_{m'} \beta_i \delta_m e_{i'}\delta_{m''}= \beta_i \delta_m$ if and only if $i'=i, j'=\tau_{[n]} i$, $m'=m+1$  and $m''=m$.
Set $(\alpha, m) = \alpha \delta_m$ be an arrow from $(i,m)$ to $(j,m)$ for an arrow $\alpha:i \to j$ and set $(\beta_i, m) = \beta_i \delta_m$ be the arrow from $(i,m)$ to $(\tau_{[n]} i, m+1)$, for each $m\in\mathbb Z$, then we have that $\overline{Q}_1 = \mathbb Z_v|_n Q_1$.

By Proposition \ref{exttraqr}, the relation set for $\tilde{\Lambda}$ is $\tilde{\rho}= \rho \cup \{\beta_{\tau_{[n]} i}\beta_i | i, \tau i \in Q_0\setminus \mathcal P\} \cup \{\tau_{[n]}(\alpha)\beta_i-\beta_j\alpha| \alpha:i\to j \in Q_1, i,j \in Q_0\setminus \mathcal P\}$.
Note that for a path $p=\alpha_r\cdots\alpha_1$ and $m\in \mathbb Z$, we have a unique path $p\delta_m= \alpha_r\delta_m\cdots\alpha_1\delta_m$, we also have $ \alpha\beta_i\delta_m= \alpha\delta_{m+1} \beta_i\delta_m $, $\beta_j\alpha\delta_m = \beta_j\delta_m\alpha\delta_m$ and $\beta_{\tau_{[n]} i}\beta_i\delta_m = \beta_{\tau_{[n]} i}\delta_{m+1} \beta_i\delta_m$.
Write $(p,m)$ for the path $p\delta_m$ and $(z, m)$ for a relation $z\delta_m = \sum a_t p_t \delta_m$ for $z = \sum a_t p_t\in \rho$, we see that $\overline{\rho} = \rho_{ \mathbb Z_v|_n Q}$.
So the bound quiver of $\tilde{\Lambda}\# \mathbb Z_v$ is exactly $\mathbb Z_v|_n Q $.

For each $i\in Q_0$, there is a maximal bound path in $\tilde{\Lambda} $ of the form $p\beta_i q$ with paths $p,q$ of $Q$ such that $l(p)+l(q)=n$ and $t(p)=s(q)=i$.
So for each $(i,m)\in \mathbb Z_v|_n$, we have a maximal bound path in $\tilde{\Lambda} \# \mathbb Z^*$ has the form $p\beta_i q\delta_m$ from $(i,m)$ to $(i,m+1)$.
Thus $\mathbb Z_v|_n Q$ is a stable $(n+1)$-translation quiver with translation $\tau_{[n+1]}: (i,m+1) \to (i,m)$.
\end{proof}

It is well known that for a path algebra $kQ$ of a quiver $Q$ without oriented cycle, the preprojective and preinjective components of its Auslander Reiten quiver are truncations of the translation quiver $\mathbb ZQ$.

Let $Q$ be $0$-translation quiver as in Example ~\ref{exaa}, then $\LL = k(Q)$ is a $0$-translation algebra with admissible $0$-translation quiver.
Let $\overline{\LL} = \tilde{\LL}\# \mathbb Z^*$ be the $0$-extension of $\LL$, then the bound quiver $\overline{\LL}$ is the $1$-translation quiver $\mathbb Z|_1 Q$, which is exactly the classical one, $\mathbb Z Q$.

If $Q$ is the quiver with bipartite orientation (when it exists), then $\LL = kQ$ is an $(1,1)$-Koszul algebra.
Let $\overline{\LL} = \tilde{\LL}\# \mathbb Z^*$ be the $0$-extension of $\LL$, then $\overline{\LL}$ is a $1$-translation algebra with $1$-translation quiver $\mathbb Z|_1 Q$ exactly the translation quiver $\mathbb Z Q$.
In general, $Q$ is a $1$-translation quiver, but not admissible, so our construction does not include the classical cases.

\medskip

Let $\Lambda$ be an extendable $n$-translation algebra with $n$-translation quiver  $Q$, which is directed in the sense that there is no oriented cycle in the quiver.
Let $\tL$ be its trivial extension, then $\tL$ is a graded self-injective algebra with Loewy length $n+3$.
Let $\overline{Q}$ be the  separated directed quiver of $\tL$, then $\mathbb Z|_n Q$ is isomorphic to a connected component of $\overline{Q}$ and thus $Q$ is isomorphic to a connected component $\tau$-slice in $\overline{Q}$.
Thus $\Lambda$ is a $\tau$-slice algebra of $\tL$.
So by Theorem 6.12 of \cite{g12}, $\tL\#\mathbb Z^*$ is isomorphic to the repetitive algebra of $\LL$ and by  Theorem II 4.9 of \cite{h1}.
Thus we have that the following theorem.

\begin{thm} Let $\Lambda$ be an extendable $n$-translation algebra with directed admissible $n$-translation quiver.
$\tL\#\mathbb Z^*$ is the repetitive algebra of $\Lambda$ and the bounded derived category $\mathcal D^b(\Lambda)$ of the category $\bmod\, \Lambda$ of finitely  generated $\Lambda$-modules is equivalent to the stable category $\underline{\bmod}\,\tL \# k \mathbb Z^*$ of finitely generated $\tL \# k \mathbb Z^*$-modules.
\end{thm}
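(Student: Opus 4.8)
The plan is to read off the bound quiver of $\tL\#k\mathbb Z^*$ from Proposition~\ref{Zinfcon}, recognize $\Lambda$ as a $\tau$-slice algebra inside the self-injective algebra $\tL$, and then quote the structure theorem of \cite{g12} together with Happel's theorem. Concretely, first I would specialize $v=0$ in Proposition~\ref{Zinfcon}: since $Q$ is admissible, the bound quiver of $\tL\#k\mathbb Z^*$ is $\mathbb Z|_n Q$, a stable $(n+1)$-translation quiver whose translation is $\tau_{[n+1]}\colon (i,m+1)\mapsto (i,m)$, built from the $\mathbb Z$-indexed copies $Q(t)$ of $Q$ glued along the returning arrows $(\beta_i,t)$. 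Extendability of $\Lambda$ and Proposition~\ref{extendible} (with $v=0$) then guarantee that $\tL\#k\mathbb Z^*$ is an $(n+1)$-translation algebra, and in particular that $\tL$ is a graded self-injective algebra of Loewy length $n+3$.

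Next I would use the hypothesis that $Q$ is directed (no oriented cycle) to match $\mathbb Z|_n Q$ with the separated directed quiver $\overline{Q}$ of $\tL$: since $\tL$ is a graded self-injective algebra of Loewy length $n+3$, its separated directed quiver decomposes into connected components, and directedness of $Q$ forces each copy $Q(t)\subset\mathbb Z|_n Q$ to sit inside a single component as a connected $\tau$-slice in the sense of \cite{g12}; that is, $\mathbb Z|_n Q$ is (isomorphic to) a connected component of $\overline{Q}$ and $Q\cong Q(0)$ is a $\tau$-slice. Hence $\Lambda$ is the corresponding $\tau$-slice algebra of $\tL$. Applying Theorem~6.12 of \cite{g12} to this $\tau$-slice datum then identifies $\tL\#k\mathbb Z^*$, as a graded algebra, with the repetitive algebra of $\Lambda$. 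Finally, because the quiver $Q$ of $\Lambda$ has no oriented cycle, $\Lambda$ has finite global dimension, so Happel's theorem (Theorem~II.4.9 of \cite{h1}) yields the triangle equivalence $\mathcal D^b(\bmod\,\Lambda)\simeq\underline{\bmod}\,(\tL\#k\mathbb Z^*)$, which is the second assertion.

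The main obstacle is the middle step: rigorously translating the purely combinatorial description of $\mathbb Z|_n Q$ as copies of $Q$ connected by returning arrows into the language of separated directed quivers and $\tau$-slices of \cite{g12}, and in particular checking that directedness of $Q$ is exactly what prevents distinct copies $Q(t)$ from interfering so that the $\tau$-slice condition holds on the nose. A secondary technical point to be checked is that the $\mathbb Z$-grading on $\tL\#k\mathbb Z^*$ coming from the smash product on the $U$-determined $\mathbb Z_0$-grading coincides with the standard grading that makes it the repetitive algebra, so that Happel's (ungraded) equivalence can legitimately be invoked; this is routine once the dictionary of \cite{g12} is set up, but it is where the verification actually lies.
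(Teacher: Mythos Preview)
Your proposal is correct and follows essentially the same route as the paper: identify $\mathbb Z|_n Q$ with a connected component of the separated directed quiver of the self-injective algebra $\tL$, recognize $Q$ as a $\tau$-slice so that $\Lambda$ is a $\tau$-slice algebra of $\tL$, invoke Theorem~6.12 of \cite{g12} to obtain $\tL\#k\mathbb Z^*\cong\widehat{\Lambda}$, and finish with Happel's Theorem~II.4.9 of \cite{h1}. Your explicit observation that directedness of $Q$ forces $\gl\Lambda<\infty$, which is the hypothesis needed for Happel's equivalence, is a point the paper leaves implicit.
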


\section{$n$-translation algebras and partial  Artin-Schelter $n$-regular algebras}
Let $\Gamma= \Gamma_0+\Gamma_1+\cdots$ be a basic locally finite algebra, and let $\{e_i|i\in Q_0\}$ be a complete set of orthogonal primitive idempotents of $\Gamma$.
$\Gamma$ is called {\em a partial  Artin-Schelter $n$-regular algebra} if there are subsets $\mathcal P$, $\mathcal I$ of $Q_0$ and bijective map $\nu:Q_0 \setminus\mathcal I \longrightarrow Q_0 \setminus \mathcal P$ such that the following conditions are satisfied for  positive integers $n,l$, and for each vertex $i \in Q_0 \setminus \mathcal I$:
(i) $\Ext_{\Gamma}^t(\Gamma_0 e_i , \Gamma)=0$ for $0 < t < n+1$;
(ii). $\Ext_{\Gamma}^{n+1}(\Gamma_0 e_i , \Gamma)[l] \simeq e_{\nu i}\Gamma_0 $ as  $\Gamma^{op}$-modules.

$\nu$ is called {\em Nakayama translation} and $l$ is called {\em  Gorenstein parameter} of $\Gamma$.
Conventionally, if $\Lambda$ is a $(p,q)$-Koszul algebra with $p$ finite, we set $q=\infty$ when $\Lambda$ is Koszul.
Let $l(q) = q$ if it is finite and $0$ if $q$ is $\infty$.
Using an argument analog to that of Proposition 5.10 in \cite{sm}, we have

\begin{thm}\label{kdl}
Assume that  $p= n+1 \ge 2$ is  finite and $ q\ge 2$. Let $\Lambda$ be a $(p,q)$-Koszul algebra. Then $\Lambda$ is an $n$-translation algebra with the generalized Coxeter number $q$ if and only if $\Gamma= \Lambda^!$ is a  partial  Artin-Schelter $n$-regular algebra with  Gorenstein parameter $l(q)$.
\end{thm}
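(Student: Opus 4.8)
The plan is to translate both sides of the equivalence into statements about the Koszul complex of $\Lambda$ and its dual, using the machinery assembled in Section 2, in particular Propositions~\ref{pqkoszul}, \ref{pqalgdual}, and~\ref{Yoneda}, and the bimodule complex displayed before Proposition~\ref{pqalgdual}. Since $p=n+1\ge 2$ and $q\ge 2$, Proposition~\ref{bbk1} tells us $\Lambda$ is quadratic, so $\Gamma=\Lambda^!$ is defined and, by Proposition~\ref{pqalgdual}, is $(q,n+1)$-Koszul; moreover $(\Lambda^!)^!=\Lambda$. The identifications $D_*(\Lambda^!_t)=K^t$ and $\Lambda^!_t\simeq\bigoplus_{i,j}\Ext^t_\Lambda(\Lambda_0e_i,\Lambda_0e_j)$ for $0\le t\le q$ will be used throughout to go back and forth between the two algebras.

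First I would unwind the definition of ``partial Artin-Schelter $n$-regular algebra'' for $\Gamma=\Lambda^!$ in terms of $\Ext_\Gamma^t(\Gamma_0e_i,\Gamma)$. Since $\Gamma$ is $(q,n+1)$-Koszul, for $0\le t\le q$ the minimal projective resolution of $\Gamma_0e_i$ is given by the first $q+1$ terms of the left Koszul complex $\Gamma\otimes \bar K^t e_i$, where $\bar K^t=D_*(\Lambda_t)$ (the roles of $K$ and its analogue for $\Lambda^!$ are swapped under quadratic duality). Applying $\hhom_\Gamma(-,\Gamma)$ to this resolution and using that $\Gamma$ is self-injective-like only in the stable case — here one must be careful — one computes $\Ext^t_\Gamma(\Gamma_0e_i,\Gamma)$ as the cohomology of the complex $\hhom_\Gamma(\Gamma\otimes \bar K^\bullet e_i,\Gamma)\cong \bar K^{\bullet\,*}e_i\otimes\Gamma$, which is precisely the right-module version of the bimodule complex displayed in Section~2, namely $\Lambda^!\otimes D_*\Lambda_\bullet e_i$ read as a complex of right $\Gamma$-modules. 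The content of the displayed bimodule computation is exactly that this complex has homology $\Gamma_0e_i$ in degree $0$ and $\Lambda^!_q\otimes D_*\Lambda_p e_i$ in degree $p+q=n+1+q$; vanishing in intermediate degrees $0<t<n+1$ is part of what ``$n$-translation'' must encode. So condition (i) in the definition of partial AS $n$-regularity corresponds to the vanishing of the middle homology of that complex, and condition (ii), the identification $\Ext^{n+1}_\Gamma(\Gamma_0e_i,\Gamma)[l]\simeq e_{\nu i}\Gamma_0$, corresponds to the top homology being a simple right module concentrated in one degree — which is exactly the statement that the relevant piece of the $n$-translation data (the pairing in Lemma~\ref{nondegbil}, with $\nu=\tau^{-1}$ transported through duality) is non-degenerate and one-dimensional.

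Next I would verify, in the forward direction, that if $\Lambda$ is an $n$-translation algebra with generalized Coxeter number $q$ — meaning $\Lambda$ is $(n+1,q)$-Koszul and its bound quiver is an $n$-translation quiver — then these homology conditions for $\Gamma$ hold, with $\nu$ defined as $\tau^{-1}$ (the opposite quiver of an $n$-translation quiver is again one, with translation $\tau^{-1}$, as noted after Lemma~\ref{nondegbil}), $\mathcal P,\mathcal I$ the projective/injective vertex sets, and Gorenstein parameter $l(q)$. The key input is Lemma~\ref{nondegbil}, which says conditions 3 and 4 of the $n$-translation quiver are equivalent to the non-degeneracy of the multiplication pairing $e_i\Lambda_te_j\times e_j\Lambda_{n+1-t}e_{\tau i}\to e_i\Lambda_{n+1}e_{\tau i}$; dualizing, this is exactly the statement that $\Lambda^!_q\otimes D_*\Lambda_{n+1}e_i$ is one-dimensional over $k$ at vertex $\nu i$, i.e.\ isomorphic to $e_{\nu i}\Gamma_0$ after the degree shift by $l(q)$, and that the lower homologies vanish. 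For the converse, I would run the same identifications backwards: given partial AS $n$-regularity of $\Gamma=\Lambda^!$ with parameter $l(q)$, apply quadratic duality (Propositions~\ref{pqalgdual} and~\ref{Yoneda}, which give $\mathcal E(\mathcal E(\Lambda))=\Lambda$) to recover that $\Lambda$ is $(n+1,q)$-Koszul, then read off from the shape of $\Ext^{n+1}_\Gamma(\Gamma_0e_i,\Gamma)$ that the multiplication pairings for $\Lambda$ are non-degenerate, hence by Lemma~\ref{nondegbil} that the bound quiver of $\Lambda$ is an $n$-translation quiver; conditions 1 and 2 of that definition (existence of a length-$n+1$ bound path $\tau i\to i$, uniqueness up to scalar, and the maximal-path statement) follow from $\Lambda_{n+1}\ne 0$, $\Lambda_{n+2}=0$, and the one-dimensionality of $e_i\Lambda_{n+1}e_{\tau i}$.

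The main obstacle I anticipate is the bookkeeping around the projective and injective vertex sets $\mathcal P$ and $\mathcal I$, i.e.\ handling the ``partial'' (non-stable) case cleanly. When $\Lambda$ is not stable, $\Gamma=\Lambda^!$ is not self-injective, so $\hhom_\Gamma(-,\Gamma)$ applied to the Koszul resolution does not simply dualize term by term to another Koszul-type complex, and one has to track exactly which vertices $i\in Q_0\setminus\mathcal I$ give a nonzero $\Ext^{n+1}$ and verify the bijection $\nu:Q_0\setminus\mathcal I\to Q_0\setminus\mathcal P$ is well-defined and matches $\tau^{-1}$ on the nose. This is the same subtlety that made the arguments in Proposition~\ref{exttraqr} delicate (the case analysis over projective/injective vertices there), and the cleanest route is probably to cite the analogue of Proposition~5.10 of~\cite{sm} as indicated and localize the argument vertex by vertex: for each fixed $i\in Q_0\setminus\mathcal I$, restrict to the idempotent truncation where the relevant paths live, where everything is finite-dimensional, and apply the stable-case computation there. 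The degree conventions ($q=\infty$ versus $l(q)=0$ in the Koszul case) also need a brief separate check, but that is routine once the finite case is settled.
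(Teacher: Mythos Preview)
Your overall strategy matches the paper's: apply $\hhom_\Gamma(-,\Gamma)$ to the Koszul resolution of $\Gamma_0e_i$ and translate the resulting homology conditions into properties of $\Lambda$. But there is a genuine confusion in the middle of your argument that the paper does not make, and it is exactly the step where the real work happens.

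You write that $\hhom_\Gamma(\Gamma\otimes\bar K^\bullet e_i,\Gamma)$ ``is precisely the right-module version of the bimodule complex displayed in Section~2, namely $\Lambda^!\otimes D_*\Lambda_\bullet e_i$ read as a complex of right $\Gamma$-modules,'' and then quote its homology from the $(q,p)$-Koszul discussion. These are different complexes. The complex $\Gamma\otimes D_*\Lambda_\bullet e_i$ is the \emph{left} Koszul complex of $\Gamma$, a complex of left $\Gamma$-modules; its homology is what $(q,p)$-Koszulity controls. The Hom-dual you actually need is $e_i\Lambda_\bullet\otimes\Gamma$, a complex of \emph{right} $\Gamma$-modules whose cohomology is $\Ext^\bullet_\Gamma(\Gamma_0e_i,\Gamma)$. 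Knowing the homology of the former does not, on its own, compute the homology of the latter; that is precisely the content of the theorem, not an input to its proof. So your claimed vanishing in intermediate degrees is circular as written.

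The paper closes this gap by a comparison argument you are missing. Assuming $\Gamma$ is partial AS $n$-regular at $i$, the Hom-dual complex $e_i\Lambda_\bullet\otimes\Gamma$ is then the first $p+1$ terms of a minimal projective resolution of the simple right module $e_{\nu i}\Gamma_0$. But so is the \emph{right} Koszul complex $e_{\nu i}D_*\Lambda_{p-\bullet}\otimes\Gamma$. Uniqueness of minimal resolutions gives an isomorphism of complexes, and because both differentials are given by the same element $u=\sum_\alpha\alpha^*\otimes\alpha$, the termwise isomorphisms assemble into a right $\Gamma^!=\Lambda$-module isomorphism $e_i\Lambda\simeq D_*(\Lambda e_{\nu i})$. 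This says exactly that the indecomposable projective $\Lambda e_{\nu i}$ is injective with socle $S(i)$, which is the $n$-translation quiver condition (conditions 1--4, equivalently Lemma~\ref{nondegbil}); conversely, that module isomorphism identifies the Hom-dual complex with a right Koszul complex, giving the required $\Ext$ computation. Your invocation of Lemma~\ref{nondegbil} is the right endpoint, but you need this two-resolution comparison to reach it; the bookkeeping around $\mathcal P,\mathcal I$ that worried you is then automatic vertex by vertex, and the null case ($\Lambda_{n+1}=0$, $\mathcal I=Q_0$) is handled separately and trivially.
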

\begin{proof}
Since $\Lambda$ is a $(p,q)$-Koszul algebra, $\Gamma= \Lambda^!$ is a $(q, p)$-Koszul algebra, by Proposition \ref{pqalgdual}.

If $\Lambda$ is $n$-translation algebra with null $n$-translation quiver, then $\Lambda_p=0$ for $p=n+1$ and $\Lambda$ is Koszul of projective dimension $\le q$, and this holds if and only if $\Gamma= \Lambda^!$ is a Koszul of projective dimension $< p$, and $Q_0= \mathcal I  =\mathcal P$.
This holds if and only if  $Q_0 \setminus\mathcal I=\emptyset$, and $\Gamma$ is a partial Artin-Schelter $n$-regular algebra.

If the bound quiver of $\Lambda$ is not null $n$-translation quiver, then we have  $\Lambda_p \neq 0$.
Thus $\Gamma = \Lambda^!$ is $(q, p)$-Koszul, and for each $i\in Q_0$, we have a Koszul complex
\eqqc{KoszulComplexii}{0\longrightarrow \Gamma \otimes D_*(\Gamma^!_p) e_i \longrightarrow \cdots \longrightarrow \Gamma \otimes D_*(\Gamma^!_{1}) e_i\longrightarrow \Gamma \otimes D_*(\Gamma^!_{0})  e_i = \Gamma  e_i \longrightarrow  0}
with non-zero homologies only in degree $0$, where it is $\Gamma_0 e_i$, and in degree $p + q$, where it is $\Gamma_q \otimes D_*( \Gamma^!_p) e_i$.
This complex is indecomposable since it is in a minimal projective resolution of $\Gamma_0 e_i$.
Applying $\hhom_{\Gamma}(\quad, \Gamma)$ on this complex one gets
\begin{eqnarray}\label{eq1} 0\longrightarrow e_i \Gamma \stackrel{d}{\longrightarrow} e_i\Gamma^!_{1}\otimes\Gamma  \stackrel{d}{\longrightarrow} \cdots \stackrel{d}{\longrightarrow} e_i\Gamma^!_{p-1}\otimes  \Gamma \stackrel{d}{\longrightarrow} e_i \Gamma^!_{p} \otimes \Gamma \longrightarrow  0.
\end{eqnarray}
$\Gamma$ is partial Artin-Schelter $n$-regular if and only if for each $i \in Q_0\setminus \mathcal I$, the only nonzero homologies in \eqref{eq1} are  at two ends, and the one at the final position is $e_{\nu i}\Gamma_0$.
On the other hand, $\Gamma$ is right $(q,p)$-Koszul, so there is a Koszul complex for $ e_{\nu i}\Gamma_0$:
\small\begin{eqnarray}\label{eq2}
0\longrightarrow e_{\nu i} D_*(\Gamma^!_p) \otimes \Gamma \stackrel{\partial}{\longrightarrow} \cdots \stackrel{\partial}{\longrightarrow} e_{\nu i}D_*(\Gamma^!_{1}) \otimes \Gamma \stackrel{\partial}{\longrightarrow} e_{\nu i}D_*(\Gamma^!_{0}) \otimes \Gamma = e_{\nu i}\Gamma \longrightarrow  0.
\end{eqnarray}\normalsize
Since $\Gamma$ is $(q,p)$-Koszul and both complexes \eqref{eq1} and \eqref{eq2} are the first $p+1$ term in a minimal projective resolution of $ e_{\nu i}\Gamma_0$, so there is an isomorphism of complexes
\tiny
$$
\arr{ccccccccc}{
 e_i\Gamma^!_{0}\otimes \Gamma &\stackrel{d}{\longrightarrow} &  e_i\Gamma^!_{1}\otimes\Gamma  & \stackrel{d}{\longrightarrow}& \cdots & \stackrel{d}{\longrightarrow} &e_i\Gamma^!_{p-1} \otimes \Gamma & \stackrel{d}{\longrightarrow} & e_i \Gamma^!_{p} \otimes \Gamma \\
\downarrow_{\phi} &&\downarrow_{\phi} &&&& \downarrow_{\phi}&&\downarrow_{\phi}\\
 e_{\nu i} D_*(\Gamma^!_p) \otimes \Gamma & \stackrel{\partial}{\longrightarrow} & e_{\nu i} D_*(\Gamma^!_{p-1}) \otimes \Gamma & \stackrel{ \partial}{ \longrightarrow} & \cdots & \stackrel{ \partial}{ \longrightarrow} & e_{\nu i}D_*(\Gamma^!_{1}) \otimes \Gamma & \stackrel{ \partial}{ \longrightarrow} & e_{\nu i}D_*( \Gamma^!_{0}) \otimes \Gamma .
}
$$
\normalsize
Here $d$ and $\partial$ are left actions by $u=\sum_{\alpha} \alpha^* \otimes \alpha$ and $\phi = \Phi \otimes \mathrm{1}$ for some isomorphism $\Phi: e_i \Gamma_t \longrightarrow e_{\nu i}D_*( \Gamma_{p-t}) = D_*( \Gamma_{p-t} e_{\nu i}) $, $0\le t \le p$ of graded vector spaces.
Thus $\phi d = \partial \phi $, that is, if $x \otimes y \in \Gamma^! \otimes \Gamma$ for bound paths $x$ and $ y$, then $$\arr{lcl}{ \sum_{\alpha\in Q_1 }(\Phi( x \alpha^*) \otimes \alpha y)  &=& \Phi \otimes \mathrm{1} (\sum_{\alpha\in Q_1 } x \alpha^* \otimes \alpha y ) \\ &=& \phi d (x \otimes y )\\ &=&\partial \phi (x \otimes y )\\ &=&\partial (\Phi (x) \otimes y )  \\ &=& \sum_{\alpha\in Q_1 } (\Phi (x)\alpha^* \otimes \alpha y ).}$$
So $ \Phi (x\alpha^*) =\Phi (x) \alpha^* $ for all $\alpha\in Q_1$ and $\Phi: e_i \Gamma^! \to D_*( \Gamma^! e_{\nu i})$ is an isomorphism of right $\Gamma^!$-modules.
This is equivalent to that $ D_*(e_i \Gamma^!) \simeq \Gamma^! e_{\nu i}$ is projective-injective as left $\Gamma^!$ module, and the injective hull of $S(i)$ and the projective cover of $S({\nu i})$ are the same.
Now take the inverse of $\nu$ as the $n$-translation $\tau$, then $\tau$ is defined for any $ i \in Q_0 \setminus \mathcal P$ and $\tau i \in Q_0 \setminus \mathcal I$, and we have that the bound paths from $\tau i$ to $i$ are maximal ones with length $p$ and any two such paths are linear dependent, and there is no bound path of length $p$ from $\tau i$ to $j$ for any $j \neq i$.

This equivalent to that the bound quiver of $\Lambda=\Gamma^!$  is an $n$-translation quiver and $\Lambda$ is a  $n$-translation algebra, since $n=p-1$.
This proves our theorem.
\end{proof}

Let $\Lambda= k(Q)$ be an $n$-translation algebra  with bound quiver $Q =(Q_0,  Q_1, \rho )$.
Then its quadratic dual $\Gamma = \Lambda^!$ is a partial Artin-Schelter $n$-regular algebra.
Combine Theorem \ref{kdl} with Proposition \ref{self-alg}, we get the first part of the following theorem.

\begin{thm}\label{eqntrans} An $n$-translation algebra $\Lambda$ is self-injective if and only if its quiver is stable, if and only if its quadratic dual $\Gamma$ is a partial Artin-Schelter $n$-regular algebra with $\mathcal P =\emptyset =\mathcal I$.

If $\Lambda$ is Koszul, then  $\Gamma$ is an Artin-Schelter regular algebra of dimension $n+1$.

If $\Lambda$ is not Koszul, then $\Gamma$ is also self-injective.
\end{thm}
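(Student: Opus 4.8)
The plan is to dispatch the biconditional by assembling Proposition~\ref{self-alg} and Theorem~\ref{kdl}, and then to prove the two remaining assertions by separating the case where the generalized Coxeter number $q$ of $\Lambda$ is infinite from the case where it is finite.

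First I would note that an $n$-translation algebra is in particular an $n$-pretranslation algebra, so Proposition~\ref{self-alg} already says it is self-injective exactly when its bound quiver $Q$ is stable, i.e. $\mathcal P=\emptyset=\mathcal I$. By Theorem~\ref{kdl}, $\Gamma=\Lambda^!$ is partial Artin-Schelter $n$-regular, and in the proof of that theorem the Nakayama translation $\nu$ of $\Gamma$ is produced as the inverse of the $n$-translation $\tau$ of $Q$; hence the two subsets $\mathcal P,\mathcal I\subseteq Q_0$ attached to $Q$ literally coincide with those attached to $\Gamma$. So ``$Q$ is stable'' and ``$\Gamma$ is partial Artin-Schelter $n$-regular with $\mathcal P=\emptyset=\mathcal I$'' are one and the same condition, which is the first assertion. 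Assuming it henceforth, $\Gamma$ satisfies $\Ext^{t}_{\Gamma}(\Gamma_0 e_i,\Gamma)=0$ for $0<t<n+1$ and $\Ext^{n+1}_{\Gamma}(\Gamma_0 e_i,\Gamma)[l(q)]\simeq e_{\nu i}\Gamma_0$ for \emph{every} $i\in Q_0$, with $\nu$ a permutation of $Q_0$.

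If $\Lambda$ is Koszul, I would argue as follows. Then $q=\infty$ (being $(n+1,l)$-Koszul with $n+1$ finite, $\Lambda$ is Koszul only for $l=\infty$), so the Gorenstein parameter is $l(q)=0$. Since $Q$ is a stable $n$-translation quiver, $\Lambda$ has Loewy length $n+2$, so $\Lambda_t=0$ for $t>n+1$, while (applying condition~1 of the definition to $Q^{op}$) every vertex of $Q$ is the source of a bound path of length $n+1$. As $\Lambda$ is Koszul so is $\Gamma=\Lambda^!$, and its left Koszul complex resolving $\Gamma_0$ has $t$-th term $\Gamma\otimes D_*(\Lambda_t)$, using $K^{t}_{\Gamma}=D_*(\Gamma^!_t)=D_*(\Lambda_t)$ and $\Gamma^!=\Lambda$; this resolution has length exactly $n+1$, whence $\gl\Gamma=n+1$. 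Combined with the previous paragraph, $\Ext^{t}_{\Gamma}(\Gamma_0 e_i,\Gamma)=0$ for $0<t<n+1$ and for $t>n+1$, while $\Ext^{n+1}_{\Gamma}(\Gamma_0 e_i,\Gamma)\simeq e_{\nu i}\Gamma_0$ up to a degree shift; it remains to check $\hhom_{\Gamma}(\Gamma_0 e_i,\Gamma)=0$. If it were nonzero, $S(i)=\Gamma_0 e_i$ would embed in some projective $P$, and the long exact $\Ext_{\Gamma}(-,\Gamma)$-sequence of $0\to S(i)\to P\to P/S(i)\to 0$ would force $\Ext^{n+1}_{\Gamma}(S(i),\Gamma)=0$, since its neighbours $\Ext^{n+1}_{\Gamma}(P,\Gamma)$ and $\Ext^{n+2}_{\Gamma}(P/S(i),\Gamma)$ both vanish (the first because $P$ is projective and $n+1\ge 1$, the second because $\gl\Gamma=n+1$), contradicting the isomorphism just displayed. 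As $\Gamma$ is moreover quadratic and generated in degree one, this says $\Gamma$ is Artin-Schelter regular of dimension $n+1$.

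If $\Lambda$ is not Koszul, then $q$ is finite, so by Proposition~\ref{pqalgdual} $\Gamma=\Lambda^!$ is $(q,n+1)$-Koszul; in particular $\Gamma_t=0$ for $t>q$, so each indecomposable projective $\Gamma e_j$ is finite-dimensional and every finitely generated $\Gamma$-module has finite length. Since $\mathcal I=\emptyset$ and $n+1\ge 2$, partial Artin-Schelter $n$-regularity gives $\Ext^1_{\Gamma}(\Gamma_0 e_i,\Gamma)=0$, hence (as $\Gamma=\bigoplus_j\Gamma e_j$) also $\Ext^1_{\Gamma}(\Gamma_0 e_i,\Gamma e_j)=0$, for all $i,j$. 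By the standard fact that over an algebra whose finitely generated modules have finite length a module is injective once $\Ext^1$ against every simple vanishes, each $\Gamma e_j$ is injective, so $\Gamma$ is self-injective. The only routine bookkeeping is the identification $K^{t}_{\Gamma}=D_*(\Lambda_t)$ and the tracking of degree shifts; the one point that I expect to require care is precisely the dichotomy exploited above --- finiteness of $q$ makes $\Gamma$ finite-dimensional so that ``every projective is injective'' applies, whereas $q=\infty$ makes $\Gamma$ of finite global dimension so that the partial $\Ext$-vanishing of Theorem~\ref{kdl} upgrades to genuine Artin-Schelter regularity --- together with extracting the vertex-set identification of the first paragraph cleanly from the case-divided proof of Theorem~\ref{kdl}.
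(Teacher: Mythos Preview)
Your argument is correct, and for the two later assertions it takes a different route from the paper. The biconditional you handle exactly as the paper does, by assembling Proposition~\ref{self-alg} and Theorem~\ref{kdl} and reading off that the sets $\mathcal P,\mathcal I$ agree on both sides. For the Koszul case the paper simply cites the literature (Smith and Mart\'{\i}nez-Villa), whereas you give a self-contained argument via $\gl\Gamma\le n+1$ together with a short contradiction for $\hhom_\Gamma(S(i),\Gamma)=0$. The real divergence is in the non-Koszul case: the paper works on the $\Lambda$-side, dualizing the right Koszul complex of $e_i\Lambda_0$ by $\hhom_\Lambda(-,\Lambda)$ (exact because $\Lambda$ is self-injective), then comparing it term-by-term with the left Koszul complex of $\Lambda_0 e_{\nu i}$ to produce an explicit isomorphism $D_*(\Gamma e_i)\simeq e_{\nu i}\Gamma$ of right $\Gamma$-modules. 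Your approach stays on the $\Gamma$-side: from $\mathcal I=\emptyset$ and $n\ge 1$ you extract $\Ext^1_\Gamma(S(i),\Gamma e_j)=0$ for all $i,j$ directly out of the partial Artin-Schelter condition, and since finiteness of $q$ makes each $\Gamma e_j$ finite-dimensional the standard finite-length injectivity criterion finishes. Your route is shorter and more elementary; the paper's route buys the extra information that the Nakayama permutation of $\Gamma$ is exactly $\nu=\tau^{-1}$, which your argument does not directly exhibit.
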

\begin{proof}
It follows \cite{sm, m2} that $\Gamma$ is an Artin-Schelter regular algebra of dimension $n+1$ if $\Lambda$ is Koszul.

Assume that $\Lambda$ is not Koszul.
Since $\Lambda$ is right $(p.q)$-Koszul self-injective, for each $i\in Q_0$, we have a Koszul complex
$$ 0\longrightarrow e_i D_*(\LL^!_q) \otimes \LL \longrightarrow \cdots \longrightarrow e_i D_*(\LL^!_1) \otimes \LL \longrightarrow e_i D_*(\LL^!_0) \otimes \LL =  e_i \LL \longrightarrow  0,$$
with non-zero homologies only in degree $0$, where it is $e_i \LL_0$, and in degree $p + q$, where it is $e_i D_*(\LL^!_q) \otimes \LL_p$.
This complex is indecomposable since it is in a minimal projective resolution of $e_i \LL_0 $.
Applying $\hhom_{\LL}(\quad, \LL)$ on this complex one gets
\begin{eqnarray}\label{eq11} 0\longrightarrow \LL e_i \stackrel{d}{\longrightarrow} \LL\otimes\LL^!_{1}e_i \stackrel{d}{\longrightarrow} \cdots \stackrel{d}{\longrightarrow}\LL\otimes  \LL^!_{q-1} e_i\stackrel{d}{\longrightarrow} \LL \otimes \LL^!_{q} e_i\longrightarrow  0.
\end{eqnarray}

$\LL$ is injective as $\LL$-module, the only nonzero homologies in \eqref{eq11} are at two ends, and the one at the final position is $\LL_0 e_{\nu i}$.
On the other hand, $\LL$ is left $(p,q)$-Koszul, so there is a Koszul complex for $ \LL_0 e_{\nu i}$:
\small\begin{eqnarray}\label{eq21}
0\longrightarrow \LL \otimes D_*(\LL^!_q)e_{\nu i} \stackrel{\partial}{\longrightarrow} \cdots \stackrel{\partial}{\longrightarrow} \LL \otimes D_*(\LL^!_1)e_{\nu i} \stackrel{\partial}{\longrightarrow} \LL \otimes D_*(\LL^!_0)e_{\nu i}= \LL e_{\nu i} \longrightarrow  0.
\end{eqnarray}\normalsize
Since $\LL$ is $(p,q)$-Koszul and both complexes \eqref{eq11} and \eqref{eq21} are the first $p+1$ term in a minimal projective resolution of $\LL_0 e_{\nu i}$, similar to the proof of  Theorem \ref{kdl}, that for each $i \in Q_0$,  $\nu i$ is defined and $ D_*(\LL^!e_i ) \simeq e_{\nu i}\LL^!$, that is $ D_*(\Gamma e_i ) \simeq e_{\nu i}\Gamma$.
This proves that $\Gamma$ is self-injective.
\end{proof}

\section{Koszul duality and $n$-almost split sequences}
Assume that  $p= n+1 \ge 2, q\ge 2$ throughout this section.
Now we study the $n$-almost split sequences related to $n$-translation algebras.

Let $\Gamma$ be a locally finite algebra, and let $\mathcal Q =\add \GG$ be the category of the finite generated projective $\GG$-modules.
The minimal projective resolution of the simples gives rise to sink sequences  in $\cQ$.
In fact, if \eqqc{sourseseq}{ 0\longrightarrow P^{n+1} (S(i)) \stackrel{f_{n+1}}{\longrightarrow} \cdots \stackrel{f_2}{\longrightarrow} P^1(S(i)) \stackrel{ f_1}{\longrightarrow} P^0 (S(i))\longrightarrow 0}
is the complex truncated from a minimal projective resolution of a simple $\GG$-module  $S$, then we have an exact sequence
\small$$ \hhom_{\cQ} ( \qquad, P^{n+1}(S(i)) ) \stackrel{f_{n+1}}{\longrightarrow} \cdots \stackrel{f_2}{\longrightarrow}\hhom_{\cQ} ( \qquad, P^1(S(i))) \stackrel{ f_1}{\longrightarrow} J_{\cQ} ( \qquad, P^0) \longrightarrow 0,$$\normalsize and thus
\eqref{sourseseq} is a sink sequence in $\cQ$.
It is interesting to know when it is a source sequence.

\begin{lem}\label{nalmost} Let $\Gamma$ be a $(q,n+1)$-Koszul partial Artin-Schelter $n$-regular algebra.
Assume that the complex \eqref{sourseseq} is truncated from a minimal projective resolution of the simple $\Gamma$-module $S(i)$.
Then \eqref{sourseseq} is a source sequence of $\cQ$ if and only if it is exact at position $n+1$.

In this case, \eqref{sourseseq} is an $n$-almost split sequence.
\end{lem}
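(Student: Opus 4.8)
The plan is to reduce the equivalence to a homological statement about the $\Gamma$-dual of the minimal projective resolution of $S(i)$, controlling its two ends by partial Artin--Schelter $n$-regularity and the Koszul-duality picture of Theorem~\ref{kdl}. Three structural facts drive everything. First, since $\Gamma$ is $(q,n+1)$-Koszul, \eqref{sourseseq} is the first $n+2$ terms of the minimal projective resolution of $S(i)$, i.e.\ the Koszul complex \eqref{KoszulComplexii}, and each $f_t$ is a radical map. Second, by Theorem~\ref{kdl} and $\Gamma^!=\Lambda$, the leftmost term $M:=P^{n+1}(S(i))$ is an \emph{indecomposable} projective $\Gamma$-module, and $\Ext_{\Gamma}^{n+1}(S(i),\Gamma)$ --- a simple right $\Gamma$-module up to a shift by $l(q)$ --- is exactly the top $\hhom_{\Gamma}(M,\Gamma)/J_{\cQ}(M,\Gamma)$. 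Third, by Proposition~\ref{pqkoszul} applied to $\Gamma$, the homology of \eqref{sourseseq} at position $n+1$ is $0$ or the extra module $\Gamma_q\otimes K^{n+1}e_i$, so that \eqref{sourseseq} is exact at position $n+1$ iff $\Omega^{n+2}S(i)=0$, iff $f_{n+2}=0$ in the minimal resolution. I treat the case $\Ext_{\Gamma}^{n+1}(S(i),\Gamma)\neq 0$; for the remaining vertices $M=0$ and the statement is degenerate and checked directly. Recall also, from the discussion before the lemma, that \eqref{sourseseq} is already a sink sequence of $N=P^0$, so it suffices to decide when it is a source sequence of $M$, and the ``$n$-almost split'' conclusion is then immediate from the definition.

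For ``exact at position $n+1$ $\Rightarrow$ source sequence'': exactness there makes \eqref{sourseseq}, augmented by $P^0\twoheadrightarrow S(i)$, a genuine projective resolution of $S(i)$ of length $\le n+1$. Since $\cQ=\add\Gamma$, a sequence of additive functors on $\cQ$ is exact iff it is exact after evaluation at $\Gamma$; evaluating the $\hhom_{\cQ}$-dual of \eqref{sourseseq} at $\Gamma$ yields the complex \eqref{eq1}, whose cohomology is $\hhom_{\Gamma}(S(i),\Gamma)$ in degree $0$, zero in degrees $1,\dots,n$, and the simple $\Ext_{\Gamma}^{n+1}(S(i),\Gamma)$ in degree $n+1$ (using $f_{n+2}=0$). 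In this regime $\hhom_{\Gamma}(S(i),\Gamma)=0$ --- equivalently \eqref{eq1} is exact at its right-hand end --- which supplies the injectivity needed at the last term of a source sequence; and since the cokernel of $\hhom_{\Gamma}(P^n,\Gamma)\to\hhom_{\Gamma}(P^{n+1},\Gamma)$ equals $\Ext_{\Gamma}^{n+1}(S(i),\Gamma)=\hhom_{\Gamma}(M,\Gamma)/J_{\cQ}(M,\Gamma)$, the image of $\hhom_{\cQ}(P^n,-)\to\hhom_{\cQ}(M,-)$ on $\cQ$ is exactly $J_{\cQ}(M,-)$. Thus \eqref{eq1} is precisely the minimal projective resolution of the functor $J_{\cQ}(M,-)$ demanded by the definition of a source sequence of $M$; together with the sink-sequence property at $N=P^0$, \eqref{sourseseq} is an $n$-almost split sequence.

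For the converse, assume \eqref{sourseseq} is a source sequence of $M=P^{n+1}$. The final arrow of its dual gives $\hhom_{\cQ}(P^n,-)\twoheadrightarrow J_{\cQ}(M,-)$, so $\operatorname{coker}(f_{n+1}^{*}\colon\hhom_{\Gamma}(P^n,\Gamma)\to\hhom_{\Gamma}(P^{n+1},\Gamma))$ is the simple module $\hhom_{\Gamma}(M,\Gamma)/J_{\cQ}(M,\Gamma)$, again because $M$ is indecomposable projective. On the other hand, from the full minimal resolution one has a short exact sequence $0\to\Ext_{\Gamma}^{n+1}(S(i),\Gamma)\to\operatorname{coker}f_{n+1}^{*}\to\Img f_{n+2}^{*}\to 0$, in which $\Ext_{\Gamma}^{n+1}(S(i),\Gamma)$ is a nonzero simple submodule and $\Img f_{n+2}^{*}\subseteq\rad(\hhom_{\Gamma}(P^{n+2},\Gamma))$ by minimality of the resolution. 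A simple module that contains a nonzero submodule equals it, so $\Img f_{n+2}^{*}=0$, whence $f_{n+2}=0$ (since $\bigcap_{\phi}\Ker\phi=0$ over $\phi\colon P^{n+1}\to\Gamma$), whence $\Omega^{n+2}S(i)=0$; that is, \eqref{sourseseq} is exact at position $n+1$.

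The main obstacle I expect is the bookkeeping at the two ends of the dual complex: reconciling the source-sequence requirement that its last map surject onto $J_{\cQ}(M,-)$ (rather than onto all of $\hhom_{\cQ}(M,-)$) with the identity $\Ext_{\Gamma}^{n+1}(S(i),\Gamma)=\hhom_{\Gamma}(M,\Gamma)/J_{\cQ}(M,\Gamma)$, and checking that the term $\hhom_{\Gamma}(S(i),\Gamma)$ at the other end vanishes in the relevant case --- which comes down to noticing that ``exact at position $n+1$'' forces $\Gamma$ into the Artin--Schelter-regular regime, with projectives carrying no socle in the pertinent degrees, whereas the non-exact case is the self-injective regime of Theorem~\ref{eqntrans}, where neither side of the equivalence holds and it is true ``by both sides being false''. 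A minor additional point is keeping the $n$-translation of $\Lambda$ and the Nakayama translation of $\Gamma$ consistently labelled, so that the vertex carrying $M$ matches the one carrying $\Ext_{\Gamma}^{n+1}(S(i),\Gamma)$.
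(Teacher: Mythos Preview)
Your approach is essentially the paper's: both dualize the truncated Koszul resolution by $\hhom_{\Gamma}(-,\Gamma)$ and use that $\Ext_{\Gamma}^{n+1}(S(i),\Gamma)$ is the simple top of $\hhom_{\Gamma}(P^{n+1},\Gamma)$ to identify the image of $f_{n+1}^{*}$ with $J_{\cQ}(P^{n+1},\Gamma)$. Your converse is packaged differently---you infer $f_{n+2}=0$ from the short exact sequence $0\to\Ext^{n+1}\to\operatorname{coker}f_{n+1}^{*}\to\Img f_{n+2}^{*}\to0$ and simplicity of the two outer terms, whereas the paper shows directly that in the non-exact case the image of $f_{n+1}^{*}$ lies strictly inside $\hhom_{\Gamma}(P^{n+1}/P^{n+1}_t,\Gamma)=J_{\cQ}(P^{n+1},\Gamma)$---but this is the same computation read in two directions.

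One caution: your argument that $\hhom_{\Gamma}(S(i),\Gamma)=0$ via ``exactness at $n+1$ forces the Artin--Schelter-regular regime'' is not quite right as stated, since the dichotomy of Theorem~\ref{eqntrans} is a global statement about stable quivers, while the hypothesis here is vertex-by-vertex. The paper in fact does not address this injectivity at all (note its displayed exact sequence for the source condition has no leading $0$), so either it is reading the source-sequence condition loosely or is silently relying on the global hypothesis of Theorem~\ref{nart}. If you want to close this, argue locally: the maps in the $\hhom_\Gamma$-dual of the Koszul complex are left multiplications by degree-one elements, so injectivity at $\hhom(P^0,\Gamma)=e_i\Gamma$ amounts to $e_i\Gamma$ having no socle in degree $0$, which is automatic.
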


\begin{proof}
Assume that $\Gamma$ is a  partial Artin-Schelter $n$-regular Koszul algebra and that the complex \eqref{sourseseq} is a truncation of  a minimal projective resolution of the simple $\Gamma$-module $S(i)$.

Since $\GG$ is a $(q,n+1)$-Koszul, we have a long exact sequence
$$0 \longrightarrow M \stackrel{f_{n+1}}{\longrightarrow} \cdots \stackrel{f_2}{\longrightarrow} P^1(S(i)) \stackrel{ f_1}{\longrightarrow} P^0 (S(i))\longrightarrow S(i)\longrightarrow 0, $$
where $M\simeq P^{n+1} (S(i))/\GG_q e_{\nu i}$ if $q$ is finite and $\GG_{p} e_{\nu i}\neq 0$ and $M\simeq P^{n+1} (S(i))$ otherwise, and \eqref{sourseseq} is exact at position $n+1$ in this case.
We have a short exact sequence  $0 \longrightarrow M \stackrel{}{\longrightarrow}P^{n} (S(i)) \longrightarrow \Omega^{n} (S(i)) \longrightarrow 0 $ and thus
$$\arr{ll}{0 \longrightarrow \hhom_{\GG}(\Omega^{n} S(i),\GG) \longrightarrow \hhom_{\GG}(P^{n} (S(i)),\GG)\\ \qquad\qquad \longrightarrow \hhom_{\GG}(M,\GG) \longrightarrow  \Ext_{\GG}(\Omega^{n} S(i),\GG) \longrightarrow 0 }$$
is exact.
So we have $ \Ext_{\GG}(\Omega^{n} S(i),\GG) \simeq \Ext^{n+1}_{\GG} ( S(i),\GG) $ is simple as $\GG^{op}$-module.
If \eqref{sourseseq} is exact at position $n+1$
Then we have an exact sequences
$$\arr{ll}{\hhom_{\cQ}(P^0(S(i)),\quad)\longrightarrow \cdots  \longrightarrow \hhom_{\cQ}(P^{n-1}(S(i)),\quad)\\ \qquad\qquad \longrightarrow \hhom_{\cQ}(P^{n}(S(i)),\quad)\longrightarrow J\hhom_{\cQ}(P^{n+1}(S(i)),\quad) \longrightarrow 0,}$$
and thus \eqref{sourseseq} is a source sequence and hence is an $n$-almost split sequence in $\cQ$.

Otherwise, we have $$\arr{ll}{\longrightarrow \hhom_{\GG}(P^{n-1}( S(i)),\GG) \longrightarrow \hhom_{\GG}(P^{n} (S(i)),\GG)\\ \qquad\qquad \longrightarrow \hhom_{\GG}(P^{n+1}(S(i))/P^{n+1}(S(i))_t, \GG) \longrightarrow  \Ext_{\GG}(\Omega^{n} S(i),\GG) \longrightarrow 0, }$$
and $\hhom_{\GG}(P^{n+1}(S(i))/P^{n+1}(S(i))_t, \GG)\subseteq J\hhom_{\cQ}(P^{n+1}(S(i)),\quad) $.
Thus  \eqref{sourseseq} is not a  source sequence.
\end{proof}

Let $\Gamma$ be a  partial Artin-Schelter $n$-regular algebra with the Nakayama translation $\nu$, if for $i \in Q_0$ with $\nu i \in Q_0$, \eqref{sourseseq} is an $n$-almost split sequence in $\cQ=\add\GG$, then we say that {\em $\GG$ has $n$-almost split sequences}.
As a corollary of Lemma \ref{nalmost}, we have the following theorem.

\begin{thm}\label{nart} Let $\Gamma$ be a  partial Artin-Schelter $n$-regular $(q,n+1)$-Koszul algebra with Nakayama translation $\nu$.
If $q$ is infinite or $q$ is finite with $\GG_q e_{\nu i}=0$  for any $i\in Q_0 \setminus \mathcal I$, then $\mathcal Q =\add \GG$ has $n$-almost split sequences.
That is, for each $i\in Q_0\setminus \mathcal I$, there is an $n$-almost split sequence   $$ \Gamma e_{\nu i} \to  U_{n} \to \cdots \to U_1 \to \Gamma e_i$$ in $\mathcal Q$.
\end{thm}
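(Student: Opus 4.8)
The plan is to deduce the assertion directly from Lemma~\ref{nalmost}, applied to the simple module $S(i)$ at each vertex $i \in Q_0 \setminus \mathcal I$. Recall that the Nakayama translation $\nu i \in Q_0$ is defined exactly for such $i$, so these are precisely the vertices at which an $n$-almost split sequence has to be produced. Since $\Gamma$ is $(q,n+1)$-Koszul, its left Koszul complex, restricted at $e_i$, realizes the first $n+2$ terms of a minimal projective resolution of $S(i) = \Gamma_0 e_i$, and this truncation is exactly the complex \eqref{sourseseq}. So for every $i \in Q_0 \setminus \mathcal I$ the complex \eqref{sourseseq} is at hand; it is automatically a sink sequence of $P^0(S(i))$ in $\cQ = \add \Gamma$, and by Lemma~\ref{nalmost} it is an $n$-almost split sequence as soon as it is exact at position $n+1$.

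Next I would pin down the two end terms. The zeroth term is the projective cover $P^0(S(i)) = \Gamma e_i$. For the top term, condition (ii) in the definition of a partial Artin--Schelter $n$-regular algebra identifies $\Ext_{\Gamma}^{n+1}(S(i), \Gamma)$, up to the Gorenstein shift, with the simple right module $e_{\nu i}\Gamma_0$; since the resolution is minimal this forces $P^{n+1}(S(i))$ to be the indecomposable projective $\Gamma e_{\nu i}$. Setting $U_t = P^{n+1-t}(S(i))$ for $1 \le t \le n$ then produces exactly the complex $\Gamma e_{\nu i} \to U_n \to \cdots \to U_1 \to \Gamma e_i$ of the statement.

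It remains to check exactness of \eqref{sourseseq} at position $n+1$, and here I would reuse the homology computation from the proof of Lemma~\ref{nalmost}: being the first $n+2$ terms of a minimal resolution, \eqref{sourseseq} has homology $S(i)$ at position $0$, vanishes at positions $1,\ldots,n$, and at position $n+1$ has the kernel of $P^{n+1}(S(i)) \to P^{n}(S(i))$, which by the $(q,n+1)$-Koszul property is either $0$ (and is automatically $0$ when $q = \infty$) or is the submodule $\Gamma_q e_{\nu i}$ of $P^{n+1}(S(i)) = \Gamma e_{\nu i}$ sitting in internal degree $n+1+q$. Hence the hypothesis --- $q$ infinite, or $q$ finite with $\Gamma_q e_{\nu i} = 0$ for every $i \in Q_0 \setminus \mathcal I$ --- forces exactness at position $n+1$ for each such $i$, and Lemma~\ref{nalmost} then delivers the desired $n$-almost split sequences. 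The argument carries no real obstacle beyond Lemma~\ref{nalmost} itself; the one point needing care is the identification $P^{n+1}(S(i)) = \Gamma e_{\nu i}$, since it is exactly this that lets the single numerical condition $\Gamma_q e_{\nu i} = 0$ control exactness at position $n+1$, hence the existence of the $n$-almost split sequence, uniformly over all vertices $i \in Q_0 \setminus \mathcal I$.
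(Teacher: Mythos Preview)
Your proposal is correct and follows precisely the paper's approach: the paper states the theorem as an immediate corollary of Lemma~\ref{nalmost}, and you have simply spelled out why the hypotheses of the theorem guarantee the exactness at position $n+1$ required by that lemma, together with the identification $P^{n+1}(S(i)) = \Gamma e_{\nu i}$ needed to read off the shape of the resulting sequence.
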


In this case, $\tau_{[n]}: \Gamma e_i \to \Gamma e_{\nu i}$ is called the {\em $n$-Auslander-Reiten translation of $\mathcal Q$}.

Let $\Lambda$ be an $n$-translation algebra with bound quiver $Q =(Q_0,  Q_1, \rho )$.
Call $\Gamma= \mathcal E(\Lambda)\simeq {\Lambda^!}^{op}$ {\em the dual $n$-translation algebra} of $\Lambda$.
Then we have that $\LL \simeq \mathcal E(\GG)\simeq {\GG^!}^{op}$, and $\Gamma$ is a  partial Artin-Schelter $n$-regular algebra.
It follows from the proof of Theorem \ref{kdl} the map $\nu$ for $\Gamma$ is just the inverse of the $n$-translation $\tau$ for $\Lambda$.
By Theorem \ref{nart}, $\mathcal Q = \add \GG$ has $n$-almost split sequences.
Especially, we have the following result.

\begin{pro}\label{nalmlambda} Let $\Lambda$ be a Koszul $n$-translation algebra and let $\GG$ be the dual $n$-translation algebra of $\Lambda$. Then $\mathrm{add}\,\Gamma$ has $n$-almost split sequences.\end{pro}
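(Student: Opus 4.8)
The plan is to obtain Proposition~\ref{nalmlambda} as an immediate corollary of Theorem~\ref{nart}: once $\GG$ is recognised as a partial Artin--Schelter $n$-regular $(q,n+1)$-Koszul algebra whose parameter $q$ is infinite, Theorem~\ref{nart} applies verbatim. So the only real task is to pin down that parameter.

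First I would record that, $\LL$ being a \emph{Koszul} $n$-translation algebra, its generalized Coxeter number equals $\infty$: by definition $\LL$ is $(n+1,l)$-Koszul for some $l\in\mathbb N\cup\{\infty\}$, and since $p=n+1$ is finite, the convention stated before Theorem~\ref{kdl} forces $l=\infty$; thus $\LL$ is $(n+1,\infty)$-Koszul.

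Next I would pass to $\GG=\mathcal{E}(\LL)={\LL^!}^{op}$. By Theorem~\ref{kdl} together with the discussion preceding this proposition, $\GG$ is a partial Artin--Schelter $n$-regular algebra whose Nakayama translation $\nu$ is the inverse of the $n$-translation $\tau$ of $\LL$, and it is $(q,n+1)$-Koszul with $q$ the generalized Coxeter number of $\LL$, i.e.\ $q=\infty$. If one prefers to avoid invoking the convention, the same conclusion comes from Theorem~\ref{eqntrans}: since $\LL$ is Koszul, $\GG$ is Artin--Schelter regular of dimension $n+1$, hence Koszul of global dimension $n+1$, so the module $W(i)=\GG_q e_{\nu i}$ occurring in the $(q,n+1)$-Koszul structure of $\GG$ vanishes for every $i\in Q_0\setminus\mathcal I$. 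In either reading, the hypothesis of Theorem~\ref{nart} --- ``$q$ is infinite, or $q$ is finite with $\GG_q e_{\nu i}=0$ for all $i\in Q_0\setminus\mathcal I$'' --- is satisfied. Applying Theorem~\ref{nart} then gives that $\cQ=\add\GG$ has $n$-almost split sequences, which is exactly the assertion.

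I do not expect a genuine obstacle here; the proposition is in essence the remark that ``$\LL$ Koszul'' is precisely the condition that makes the parameter governing Theorem~\ref{nart} infinite. The only points deserving a line of care are that the endpoint $q=\infty$ is legitimately handled by the machinery behind Theorem~\ref{nart} (the proof of Lemma~\ref{nalmost}, on which it rests, treats the finite and infinite $q$ cases separately), and that the running assumptions $p=n+1\ge 2$ and $q\ge 2$ of this section remain compatible with $q=\infty$.
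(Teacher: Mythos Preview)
Your proposal is correct and follows the same route as the paper: the proposition is stated there as an immediate consequence (``Especially, we have the following result'') of the discussion preceding it and of Theorem~\ref{nart}, using that $\Gamma=\mathcal E(\Lambda)$ is partial Artin--Schelter $n$-regular and that Koszulity of $\Lambda$ forces $q=\infty$. Your write-up is in fact more explicit than the paper's, which gives no separate proof.
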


Let $\Lambda$ be an $n$-translation algebra with bound quiver $Q=(Q_0,Q_1, \rho)$ and $n$-translation $\tau$.
Note that the complex \eqref{sourseseq} is in fact isomorphic to the Koszul complex \eqref{KoszulComplexii} and $ D_*(\GG^!_t) \simeq  \mathcal E (\GG)_t \simeq \LL_t.$
Thus $P^{t}(S(i)) \simeq \GG \otimes_{\GG_0} \LL_t e_i$, thus if $\LL_t e_i \simeq  \bigoplus_j S(j)^{r_j}$, then $$P^{t}(S(i)) \simeq \bigoplus_j (\GG e_j) ^{r_j}.$$
Such information are easily read out from the $n$-translation quiver using {\em $\tau$-hammock starting at $i$,} which is defined for each $i\in Q_0$ as the quiver  $H^i$ with the vertex set $$H^i_0=\{ (j,t) | j \in Q_0, \mbox{ there is a bound path of length $t$ from $i$ to $j$} \}, $$
and the arrow set
$$\arr{l}{ H^i_1=\{ (\alpha , t): (j,t) \longrightarrow (j', t+1)|\alpha: j\to j'\in Q_1, \mbox{there is bound path $p$} \\ \qquad\qquad \mbox{of length $t$ from $i$ to $j$ such that $\alpha p$ is a bound path of length $t+1$} \}.} $$\normalsize
Define the hammock functions $\mu^i: H^i_0 \longrightarrow \mathbb Z$ to be the integral  maps on the vertices of $H^i$ as follows.
For each $(j,t)\in H^i_0$, define $\mu^i(j,t)$ to be the number of the bound paths of length $t$ from $i$ to $j$ in a maximal linearly independent set.
Let $H^i$ be the hammock starting at $i$ with hammock function $\mu^i$, write $H^i(t)=\{(j,t)\in H^i_0\}$.
Let $i \in Q_0\setminus \mathcal I$, the $\tau$-hammock $H^i$ has the property  that $H(n+1)= \{(\tau^{-1} i, n+1)\}$ and $\mu_i(\tau^{-1} i, n+1 )=1$.

We clearly  have the following result describing the radical series of indecomposable projective $\Lambda$-modules.
\begin{pro}\label{radsoc}
Let $\Lambda$ be an $n$-translation algebra with bound quiver $Q=(Q_0,Q_1, \rho)$ and $n$-translation $\tau$, then
$$\Lambda_t e_i \simeq   \bigoplus_{(j,t)\in H^i_0} S(j)^{\mu^i(j,t)}.$$
\end{pro}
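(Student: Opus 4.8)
The plan is to translate the statement about the radical layer $\rad^t P(i)/\rad^{t+1}P(i) \simeq \Lambda_t e_i$ into combinatorial data on the bound quiver, exactly as packaged by the $\tau$-hammock $H^i$ and its hammock function $\mu^i$. First I would recall that since $\Lambda = k(Q)$ is graded by path length with $\Lambda_0$ semisimple and $P(i) = \Lambda e_i$, the $t$-th radical layer of $P(i)$ coincides with the degree-$t$ component $\Lambda_t e_i$, which as a $\Lambda_0$-module decomposes as $\Lambda_t e_i = \bigoplus_{j \in Q_0} e_j \Lambda_t e_i$; each summand $e_j\Lambda_t e_i$ is a copy of $S(j)$ repeated $\dim_k e_j\Lambda_t e_i$ times. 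So the whole content is the identification $\dim_k e_j \Lambda_t e_i = \mu^i(j,t)$, together with the observation that $e_j\Lambda_t e_i \neq 0$ exactly when $(j,t) \in H^i_0$.

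The key step is the dimension count. By construction $e_j \Lambda_t e_i$ is the image in $\Lambda = kQ/I$ of the span of all paths of length $t$ from $i$ to $j$; equivalently it is spanned by the classes of the bound paths of length $t$ from $i$ to $j$, modulo the linear relations among them coming from $\rho$. Hence $\dim_k e_j\Lambda_t e_i$ equals the maximal number of linearly independent bound paths of length $t$ from $i$ to $j$ in $k(Q)$, which is precisely the definition of $\mu^i(j,t)$. This gives $e_j\Lambda_t e_i \simeq S(j)^{\mu^i(j,t)}$, and summing over those $j$ for which $\mu^i(j,t) > 0$ — i.e. over $(j,t) \in H^i_0$ — yields the claimed formula. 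One should also note that for $(j,t) \notin H^i_0$ there is no bound path of length $t$ from $i$ to $j$, so $e_j\Lambda_t e_i = 0$ and such terms genuinely drop out, so the sum in the proposition is exactly the full decomposition of $\Lambda_t e_i$.

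The main obstacle — and really the only thing that needs care rather than bookkeeping — is making sure the $n$-translation hypotheses are used only where legitimately needed, namely nowhere: this proposition is purely a restatement of the definition of $\mu^i$, valid for any graded algebra $k(Q)$ with homogeneous relations, and the statement "We clearly have the following result" in the text signals exactly this. The one place a reader might hesitate is the claim that the hammock arrow set $H^i_1$ and the remark about $H(n+1) = \{(\tau^{-1}i, n+1)\}$ with $\mu^i(\tau^{-1}i, n+1) = 1$ are consistent with the decomposition; but these are consequences of Condition~2 and Condition~1 of the $n$-translation quiver (two bound paths of length $n+1$ from $\tau i$ to $i$ are linearly dependent, and maximal bound paths have length $n+1$), and they are not needed to prove the displayed formula itself — they only confirm that the top of the hammock behaves as expected. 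So I would present the proof in three short moves: (i) $\rad^t P(i)/\rad^{t+1}P(i) = \Lambda_t e_i$ by the grading; (ii) $\Lambda_t e_i = \bigoplus_j e_j\Lambda_t e_i$ with $e_j\Lambda_t e_i \simeq S(j)^{\dim e_j\Lambda_t e_i}$ by semisimplicity of $\Lambda_0$; (iii) $\dim_k e_j\Lambda_t e_i = \mu^i(j,t)$ by unwinding the definition of bound paths and of $\mu^i$, with the convention that terms with $\mu^i(j,t) = 0$ are absent, i.e. the index set is $H^i_0$.
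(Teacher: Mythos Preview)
Your proposal is correct and matches the paper's approach: the paper states the result as an immediate consequence of the definitions (``We clearly have the following result''), and your unpacking via $\Lambda_t e_i = \bigoplus_j e_j\Lambda_t e_i \simeq \bigoplus_j S(j)^{\dim e_j\Lambda_t e_i}$ together with $\dim_k e_j\Lambda_t e_i = \mu^i(j,t)$ is exactly that unwinding. Your observation that the $n$-translation hypothesis is not actually used in the displayed isomorphism is also correct.
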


We have the following theorem describing the $n$-almost split sequences in $\mathcal Q=\add \cE(\LL)$, using Koszul complexes by Propositions \ref{nalmost}, and Proposition \ref{radsoc}.

\begin{thm}\label{nalterm}
Assume that $\LL$ is an $n$-translation algebra with $n$-translation quiver $Q$ and $n$-translation $\tau$.
Let $\GG=\mathcal E(\LL)$ and let $\cQ=\add \GG$ be the category of finitely generated graded projective $\Gamma$-modules.
If $$M^{n+1} \to M^{n} \to \cdots \to M^0$$ is an $n$-almost split sequence in $\mathcal Q$ and $M^0$ is indecomposable.
Then there is $i\in Q_0 \setminus \cP$, such that $M^0 = \Gamma e_i$, $$M^t = \bigoplus_{(j,t)\in H^i_{0}}  (\Gamma e_j)^{\mu_i(j,t)} $$ for $0\le t\le n$, and $M^n \simeq \Gamma e_{\tau^{-1} i} $.
\end{thm}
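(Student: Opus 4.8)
The plan is to assemble this from the three ingredients already in hand: Lemma~\ref{nalmost}/Theorem~\ref{nart} which produce the $n$-almost split sequence from a truncated minimal projective resolution of a simple $\Gamma$-module; the Koszul-complex description of that resolution in terms of $D_*(\GG^!)=\cE(\GG)\simeq\LL$; and Proposition~\ref{radsoc} which reads off $\LL_t e_i$ from the $\tau$-hammock $H^i$. First I would recall that $\GG=\cE(\LL)\simeq{\LL^!}^{op}$ is a $(q,n+1)$-Koszul partial Artin-Schelter $n$-regular algebra (Theorem~\ref{kdl}), so that the Nakayama translation $\nu$ of $\GG$ is the inverse of $\tau$ (from the proof of Theorem~\ref{kdl}), and by Theorem~\ref{nart} the category $\cQ=\add\GG$ has $n$-almost split sequences: for each $i\in Q_0\setminus\mathcal I$ — equivalently, since $\nu=\tau^{-1}$, for each $i$ with $\tau^{-1}i$ defined, i.e. $i\in Q_0\setminus\mathcal I$ so that $\tau^{-1}i\in Q_0\setminus\mathcal P$ — the $n$-almost split sequence ending at $\GG e_i$ is the truncation \eqref{sourseseq} of the minimal projective resolution of $S(i)$.

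Next I would invoke uniqueness: by the discussion after the definition of source sequences, an indecomposable object of $\cQ$ has at most one source sequence up to isomorphism of complexes, and a source sequence that is also a sink sequence (an $n$-almost split sequence) with indecomposable $M^0$ must have $M^0$ equal to an indecomposable projective $\GG e_i$ (since every object of $\cQ$ is projective). So given the hypothetical $n$-almost split sequence $M^{n+1}\to\cdots\to M^0$ with $M^0$ indecomposable, write $M^0=\GG e_i$; the requirement that $\GG e_i$ admit such a sequence forces $i\in Q_0\setminus\mathcal I$ — actually I should double-check the indexing: the statement asks for $i\in Q_0\setminus\cP$, which is consistent because $\tau^{-1}i$ appears in the conclusion, so I would state it as: the sequence ending at $\GG e_i$ exists iff $\tau^{-1}i$ is defined, and then identify $M^0=\GG e_i$ with $i$ (relabelling as in Theorem~\ref{nart}) so that the terminal term is $\GG e_{\tau^{-1}i}$. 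By uniqueness this hypothetical sequence coincides with \eqref{sourseseq}, so $M^t\simeq P^t(S(i))$ for $0\le t\le n$ and $M^{n+1}\simeq P^{n+1}(S(i))$, while in the $n$-almost split case the top term collapses to $\GG e_{\tau^{-1}i}$ (the one-dimensional-at-top piece $\GG_0 e_{\nu i}$ surviving, by the partial Artin-Schelter condition).

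Then I would compute the $P^t(S(i))$. Since the minimal projective resolution of $S(i)$ over the $(q,n+1)$-Koszul algebra $\GG$ is given (in low degrees) by the Koszul complex \eqref{KoszulComplexii}, we have $P^t(S(i))\simeq\GG\otimes_{\GG_0} D_*(\GG^!_t)e_i\simeq\GG\otimes_{\GG_0}\cE(\GG)_t e_i\simeq\GG\otimes_{\GG_0}\LL_t e_i$, using $\cE(\GG)\simeq\LL$ and $D_*(\GG^!_t)\simeq\cE(\GG)_t\simeq\LL_t$. Decomposing the semisimple $\GG_0$-module $\LL_t e_i\simeq\bigoplus_j S(j)^{r_j}$ gives $P^t(S(i))\simeq\bigoplus_j(\GG e_j)^{r_j}$, and by Proposition~\ref{radsoc} the multiplicities are exactly $r_j=\mu^i(j,t)$ with $(j,t)$ ranging over $H^i_0$. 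Hence $M^t=\bigoplus_{(j,t)\in H^i_0}(\GG e_j)^{\mu_i(j,t)}$ for $0\le t\le n$. For the top term, the $\tau$-hammock property $H^i(n+1)=\{(\tau^{-1}i,n+1)\}$ with $\mu_i(\tau^{-1}i,n+1)=1$ — equivalently $\LL_{n+1}e_i\simeq S(\tau^{-1}i)$ — gives $P^{n+1}(S(i))\simeq\GG e_{\tau^{-1}i}$, and since this is already indecomposable with simple top there is nothing to quotient; thus $M^n\simeq\GG e_{\tau^{-1}i}$ as claimed (reading $M^n$ in the length-$(n+2)$ sequence $M^{n+1}\to\cdots\to M^0$ — I must be careful that the paper's indexing of the terms of the $n$-almost split sequence in the statement runs $M^{n+1},\dots,M^0$, so the terminal nonzero term is $M^{n+1}$; I would align the notation with \eqref{sourseseq} and Theorem~\ref{nart} and note that the last term equals $\GG e_{\tau^{-1}i}$).

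The main obstacle I anticipate is purely bookkeeping: matching the several indexing conventions — the direction of the resolution versus the direction written in the $n$-almost split sequence, the identification $\nu=\tau^{-1}$ versus $\tau$, and the off-by-one between "length $n+1$" hammock layers and the $(n+1)$-st syzygy — and making sure the case distinction in Lemma~\ref{nalmost} (whether $\GG_q e_{\nu i}=0$) does not intrude, which it does not here because Theorem~\ref{nart}'s hypothesis (Koszul, or $\GG_q e_{\nu i}=0$) is exactly what guarantees the truncated resolution is a genuine $n$-almost split sequence with the clean terminal term $\GG e_{\tau^{-1}i}$. Everything else is a direct transcription of the Koszul complex through the identification $D_*(\GG^!)\simeq\cE(\GG)\simeq\LL$ combined with Proposition~\ref{radsoc}.
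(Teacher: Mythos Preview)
Your proposal is correct and follows exactly the approach the paper indicates: the paper does not give a detailed proof but simply states (just before the theorem) that the complex \eqref{sourseseq} is the Koszul complex \eqref{KoszulComplexii}, that $P^t(S(i))\simeq \GG\otimes_{\GG_0}\LL_t e_i$, and that the result then follows from Proposition~\ref{radsoc} together with Lemma~\ref{nalmost}. Your reconstruction fills in precisely these steps, and your care about the indexing is warranted---the statement as printed has apparent typos (it should be $i\in Q_0\setminus\mathcal I$ so that $\tau^{-1}i$ is defined, and the final term should be $M^{n+1}\simeq\GG e_{\tau^{-1}i}$ rather than $M^n$), which you correctly flag.
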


\section{Examples}
The following examples show how $n$-translation algebra are applied to the path algebra of Dynkin type.
In \cite{gl15}, we present some classes of $n$-translation algebras related to higher representation theory \cite{i4,hio}.

\begin{exa}
{\em
In \cite{i4}, Iyama characterized absolute  $n$-complete algebra $T^{m}_{n}(k)$ with dominant dimension no less than $n$ and global dimension no more than $n$.
We show that such algebras can be obtained as truncations from the Skew group algebras of cyclic groups over the polynomial algebras \cite{g13}.

Using results in this paper, we point out that the Auslander Reiten quiver of  the cone $\mathcal M$ of Iyama's absolute $n$-complete algebra $T^{4}_{n}(k)$ can also be obtained as a truncation from the quiver of the $0$-extension of an  $n$-translation algebra for $n=1, 2, \cdots $ (For the detail and for general case see \cite{gl15}).

Starting with quiver $ Q(1)=Q:\stackrel{1}{\circ}\longrightarrow \stackrel{2}{\circ} \longrightarrow \stackrel{3}{\circ}\longrightarrow \stackrel{4}{\circ}$. Let $\Lambda(1)$ be the $0$-translation algebra with quiver $Q(1)$ and relations all the paths of length $2$.
Then $$T^{4}_{1}(k)\simeq \GG(1)=\cE(\LL(1))$$ is just the quadratic dual $\LL(1)$.

It follows from \cite{bbk}, that the trivial extension $\tL(1)$ of $\LL(1)$ is $(2,m-1)$-Koszul, hence $\LL(1)$ is extendable, and its quiver is $\tilde{Q}(1):\xymatrix@C=0.8cm@R0.3cm{\stackrel{1}{\circ}\ar@/^/[r]^{\alpha_1}& \stackrel{2}{\circ} \ar@/^/[l]^{\alpha^*_1}\ar@/^/[r]&{3}\ar@/^/[l] \ar@/^/[r]^{\alpha_{m-1}}&\stackrel{4}{\circ}\ar@/^/[l]^{\alpha^*_{m-1}
}}$

Now we have a $1$-translation algebra $\olL(1) = \tL(1)\#\mathbb Z$ with $1$-translation quiver $\mathbb Z|_1Q(1) =\mathbb ZQ(1)$.\\
\begin{tabular}{cr}
{$\arr{c}{\\ \\ \\ \\ \overline{Q(1)}\\}$}{\tiny$${\xymatrix@C=0.3cm@R0.3cm{
&&&\stackrel{(4,0)}{\circ}\ar[dr]&&\stackrel{(4,1)}{\circ}\ar[dr]&& \stackrel{(4,2)}{\circ}\ar[dr]&&\stackrel{(4,3)}{\circ}&&&\\
&&\stackrel{(3,0)}{\circ}\ar@{--}[ll]\ar[ur]\ar[dr]&&\stackrel{(3,1)}{\circ}\ar[ur]\ar[dr]&& \stackrel{(3,2)}{\circ}\ar[ur]\ar[dr]&&\stackrel{(3,3)}{\circ}\ar[ur]\ar[dr]&\ar@{--}[r]&&&\\
&&&\stackrel{(2,1)}{\circ}\ar[ur]\ar[dr] && \stackrel{(2,2)}{\circ}\ar[ur] \ar[dr] &&\stackrel{(2,3)}{\circ}\ar[ur] \ar[dr] &&\stackrel{(2,4)}{\circ}\\
&&\stackrel{(1,1)}{\circ}\ar@{--}[ll]\ar[ur]&& \stackrel{(1,2)}{\circ}\ar[ur]&& \stackrel{(1,3)}{\circ}\ar[ur]&& \stackrel{(1,4)}{\circ}\ar[ur]&\ar@{--}[r]&
}}$$}\normalsize
\end{tabular}\\
In this quiver $Q(1) \simeq Q(1)\times \{1\}$ is a $\tau_{[1]}$-slice.
Now for each vertex $\bar{i} =(i,1)$, consider the hammock $H^{\bar{i}}$ starting at $\bar{i}$, which is used in \cite{g02} to describe the linear part of the projective resolution of $S(\bar{i})$.
Let $Q(2)$  be the truncation using the hammocks starting at  the vertices on $ Q(1)\times \{1\}$.\\
\begin{tabular}{cr}
{$\arr{c}{\\ \\ \\ \\ Q(2) \\}$} &{\tiny$${\xymatrix@C=0.3cm@R0.3cm{
&&&\stackrel{(4,1)}{\circ}\ar[dr]&& &&&&&\\
&&\stackrel{(3,1)}{\circ}\ar[ur]\ar[dr]&& \stackrel{(3,2)}{\circ}\ar[dr]&&&&&&\\
&\stackrel{(2,1)}{\circ}\ar[ur]\ar[dr] && \stackrel{(2,2)}{\circ}\ar[ur] \ar[dr] &&\stackrel{(2,3)}{\circ} \ar[dr] &&
\\
\stackrel{(1,1)}{\circ}\ar[ur]&& \stackrel{(1,2)}{\circ}\ar[ur]&&\stackrel{(1,3)}{\circ}\ar[ur]&&\stackrel{(1,4)}{\circ}&&
}}$$}\normalsize
\end{tabular}\\
We have a $1$-translation algebra $\Lambda(2)$ defined on $Q(2)$.
We remark that though $\add \cE(\tL(1)$ and  $\add \cE(\olL(1)$ have not $1$-almost split sequence,  $\add \cE(\LL(2))$ do have $1$-almost split sequences, by Theorem \ref{nart}.
In fact, this category is equivalent to the cone of $T^{4}_{1}(k)$, and we have that $ T^{4}_{2}(k) \simeq \GG(2)= \cE(\LL(2))$, and its quiver is a truncation of $\mathbb ZQ(1)=\mathbb Z|_0Q(1)$.
}\end{exa}

The general case for arbitrary $m$ and $n$ are studied in \cite{gl15}.
It can be regard as a version of the classical result that the Auslander Reiten quiver of the path algebra $kQ$ for a Dynkin quiver $Q$ is a truncation of $\mathbb ZQ$.

\begin{exa}{\em
Let $\GG=kQ$ be the path algebra of quiver $Q$, and let $\LL =\mathcal E (\GG)$ be its quadratic dual.
Then $\LL$ is a algebra with radical squared zero.
In general, $\LL$ is not a $0$-translation algebra, but the trivial extension $\tL$ is a $1$-translation algebra by Theorem 2.1 of \cite{bbk}.
Thus $\tL\ltimes \mathbb Z^*$ is a self-injective $1$-translation algebra with quiver $\mathbb Z Q$ as its translation quiver.
If $Q$ is a Dynkin quiver with Coxeter number $h$, $\olG= \cE(\tL\ltimes \mathbb Z^*)$ is an self-injective $h-3$-translation algebra by Theorem 2.1 of \cite{bbk} and Theorem \ref{eqntrans}.
Now take $Q(1)$ as the full subquiver  of $\mathbb Z Q$ with the vertex set $$
\{ (j,t) |((j,t),l)\in H_0^{i}, i\in Q_0, 0\le l \le h-2\}.$$
Let $\GG(2)$ be the quotient of $\olG$ with $Q(2)$ and $\LL(2)=\cE(\GG(2))$.
The $\LL(2)$ is $1$-translation algebra.
$\add \olG$ and $\add \cE (\tL)$ have not ($1$-)almost split sequence.
But $\add\GG(2)$ has $1$-almost split sequences.

We remark that $\cE (\tL)$ is the preprojective algebra of $\GG$ and $\GG(1)$ is the Auslander algebra of $\GG$.

If $Q$ is not Dynkin, then $\tL$ and $\olL$ are both Koszul, so both $\add\cE(\tL)$ and
$\add \cE(\tL\ltimes \mathbb Z^*)$ have $1$-almost split sequences.

}\end{exa}

\section*{Acknowledgements}
The author would like to thank the referee for suggestions and comments on revising and improving the paper.

\section*{References}

{}
\end{document}